\documentclass[11pt]{amsart}
\usepackage{amsmath,mathrsfs,cite}

\usepackage{graphicx}
\usepackage[small]{caption}
\usepackage[colorlinks=true,urlcolor=blue,breaklinks=true,
citecolor=red,linkcolor=blue,linktocpage,pdfpagelabels,bookmarksnumbered,bookma rksopen]{hyperref}
\usepackage[english]{babel}
\usepackage[utf8]{inputenc}
\usepackage{enumerate}
\usepackage[hyperpageref]{backref}

\usepackage[left=3.1cm,right=3.1cm,top=3.15cm,bottom=3.15cm]{geometry}

\newcommand{\N}{{\mathbb N}}

\newcommand{\R}{{\mathbb R}}

\newcommand{\eps}{\varepsilon}
\newcommand {\Div} {\mbox{\rm div}} 

\numberwithin{equation}{section}
\newtheorem{theorem}{Theorem}[section]
\newtheorem{proposition}[theorem]{Proposition}
\newtheorem{lemma}[theorem]{Lemma}

\newtheorem{remark}[theorem]{Remark}

\newtheorem{definition}[theorem]{Definition}

\theoremstyle{definition}

\def\Xint#1{\mathchoice
	{\XXint\displaystyle\textstyle{#1}}%
	{\XXint\textstyle\scriptstyle{#1}}%
	{\XXint\scriptstyle\scriptscriptstyle{#1}}%
	{\XXint\scriptscriptstyle\scriptscriptstyle{#1}}%
	\!\int}
\def\XXint#1#2#3{{\setbox0=\hbox{$#1{#2#3}{\int}$ }
		\vcenter{\hbox{$#2#3$ }}\kern-.6\wd0}}

\def\dashint{\Xint-}

\newcommand{\bgset}[1]{\big\{#1\big\}}

\newcommand{\norm}[2][]{\left\|#2\right\|_{#1}}

\DeclareMathOperator{\dvg}{div}

\title[Eigenvalues for double phase variational integrals]{Eigenvalues for double phase variational integrals}

\author{Francesca Colasuonno}
\author{Marco Squassina}

\address[M. Squassina]{Dipartimento di Informatica \newline
Universit\`a degli Studi di Verona
\newline\indent
C\'a Vignal 2, Strada Le Grazie 15,
I-37134 Verona, Italy}
\email{marco.squassina@univr.it}

\address[F.\ Colasuonno]{Istituto per le Applicazioni del Calcolo ``M. Picone''\newline
Consiglio Nazionale delle Ricerche\newline\indent
Via dei Taurini 19,
00185 - Roma, Italy}
\email{f.colasuonno@iac.cnr.it}

\begin{document}
	

\subjclass[2010]{35J92, 35P30, 47A75}

\keywords{Quasilinear problems, double phase problems, nonstandard growth conditions,
	Musielak-Orlicz spaces, $\Gamma$-convergence, stability of eigenvalues, Weyl type laws.}

\begin{abstract}
We study an eigenvalue problem in the framework of double 
phase variational integrals and we introduce a sequence of nonlinear eigenvalues by a minimax procedure.
We establish a continuity result for the nonlinear eigenvalues with respect to the variations of the phases.
Furthermore, we investigate the growth rate of this sequence and get a Weyl-type law consistent with
the classical law for the $p$-Laplacian operator when the two phases agree.
\end{abstract}

\maketitle

\begin{center}
	\begin{minipage}{11cm}
		\small
		\tableofcontents
	\end{minipage}
\end{center}

\section{Introduction}

\subsection{Overview}
While the theory of linear eigenvalue problems is a well established topic of functional analysis \cite{courantH}, in the last few decades
many contributions were devoted to the study of nonlinear eigenvalue problems. As pointed out by
P.\ Lindqvist in his monograph \cite{Lindbook}, the work \cite{lieb} by E.H.\ Lieb was probably one of the first
containing  an interesting result about the minimum of a nonlinear Rayleigh quotient in several variables. Subsequently,
and especially in the first years of the nineties various papers were written by P.\ Lindqvist on the subject, we recall
here \cite{LindAMS,stab,stab2,Franzi,LL} and the comprehensive overview contained in his monograph \cite{Lindbook}.
\subsubsection{$p$-Laplacian}
More precisely, if $\Omega\subset\R^n$ is a smooth bounded domain and $p>1$, for the quasi-linear eigenvalue problem
\begin{equation}
\label{p-constant}
-{\rm div}(|\nabla u|^{p-2}\nabla u)=\lambda |u|^{p-2}u,\quad u\in W^{1,p}_0(\Omega),
\end{equation}
existence, regularity, qualitative properties and stability of eigenpairs $(u,\lambda)$ 
with respect to $p$ were investigated. The physical motivations that lead to the study of the eigenvalue
problem \eqref{p-constant} are mainly within the context of non-Newtonian fluids, dilatant for $p>2$ and pseudoplastic
for $1<p<2$, nonlinear elasticity and glaceology. Of particular relevance is the investigation of the properties of the first 
eigenpair $(u^1_p,\lambda^1_p)$, which corresponds to a solution
to the nonlinear minimization problem
$$
\lambda^1_p(\Omega):=\inf_{u\in \mathcal{M}_{p}} \|\nabla u\|_p^p,\qquad
\mathcal{M}_p:=\{u\in W^{1,p}_0(\Omega):\|u\|_{p}=1\}.
$$
In the one-dimensional case, $(u^1_p,\lambda^1_p)$ is explicitly determined by solving
the corresponding ODE boundary value problem. If $\Omega= (a, b)$, then 
$$
\lambda_p = (\pi_p/ (b-a))^{p-1},\qquad
\pi_p := 2(p-1)^{1/p}\int_0^1 (1-s^p)^{-1/p}ds,
$$
and
$$
u^1_p(x) =(p-1)^{-1/p} \sin_p (\pi_p(x-a)/(b-a)), 
$$
where $\sin_p$ is a $2\pi_p$-periodic function that generalizes the classical sine function \cite{1d}. Of course an analogous
analysis is not possible in the higher dimensional case.
The existence of a sequence of higher eigenvalues $(u^m_p,\lambda^m_p)$ can be obtained as a solution to 
\[
\lambda^{m}_p(\Omega):=\inf_{K\in\mathcal W^{m}_p}\max_{u\in K}\, \|\nabla u\|^p_{p},
\]
where 
\begin{equation}
\label{doppiwu}
\mathcal W^{m}_p:=\left\{K\subset\mathcal{M}_{p}
\,:\,  K \mbox{ symmetric $(K=-K)$ and compact},\, \gamma(K)\ge m\right\},
\end{equation}
and $\gamma(K)$ denotes the {\it Krasnosel'ski\u{\i} genus} of $K$. We recall that for every nonempty and symmetric subset $A$ of a Banach space $X$, its Krasnosel'ski\u{\i} genus is defined by
\begin{equation}
\label{krasno}
\gamma(A):=\inf\left\{k\in\mathbb{N}\, :\, \exists \mbox{ a continuous odd map } f:A\to\mathbb{S}^{k-1}\right\},
\end{equation}
where $\mathbb S^{k-1}$ is the unit sphere in $\mathbb R^k$, and with the convention that $\gamma(A):=+\infty$, if no such an integer $k$ exists. Actually one can define, in a similar fashion, a sequence
of higher variational eigenvalues by replacing the Krasnosel'ski\u{\i} genus with any other topological index $i$ which satisfies the
properties listed at the end of Section~\ref{sec4}. It is unknown whether these topological constructions 
exhaust the spectrum or not, which is the case for linear eigenvalue problems.
This is, in fact, one of the main open problems in the field since the appearance of these results. The sequence of eigenvalues 
$(\lambda_p^m)$ depend continuously, in smooth domains, on the value of $p$ (cf.\ \cite{stab,stab2,DM14,DM15,parini2011,Champ}) and, fixed the value
of $p$, they grow in $m$ according to a suitable Weyl-type law, $\lambda^m_p\approx C m^{p/n}$ for $m$ large, consistently with the celebrated
Weyl law for the linear case $p=2$, see e.g.\ \cite{MR1017063,peralGar}. 
For a complete investigation of the Stekloff spectrum for the pseudo $p$-Laplacian operator
$\sum_i D_i(|D_i u|^{p-2}D_i u)$, we refer the reader to \cite{bra-franz}.

\subsubsection{Fractional $p$-Laplacian}
We wish to point out that, very recently, 
a nonlocal version of the $p$-Laplacian, the fractional $p$-Laplacian $(-\Delta_p)^s$, 
\begin{equation}
\label{plap}
(- \Delta_p)^s\, u(x):= 2\, \lim_{\varepsilon \searrow 0} \int_{\mathbb{R}^n \setminus B_\varepsilon(x)}
\frac{|u(x) - u(y)|^{p-2}\, (u(x) - u(y))}{|x - y|^{n+s\,p}}\, dy, \qquad x \in \mathbb{R}^n.
\end{equation}
was introduced in \cite{LL}, where
properties of the first eigenvalue are investigated. Subsequently, Weyl-type laws were studied in \cite{meyanez} (an optimal
result, consistent with the local case, is not available yet, due to the strong nonlocal effects in the analysis)
and a complete analysis about the stability of variational eigenvalues was obtained in \cite{brasco}, with particular reference to the 
singular limit $s\nearrow 1$ towards the eigenvalues of the fractional Laplacian 
$(-\Delta)^s=\mathcal{F}^{-1}\,\circ \mathscr{M}_{s}\circ \mathcal{F}$,
where $\mathcal{F}$ is the Fourier transform operator and $\mathscr{M}_{s}$ is the multiplication by $|\xi|^{2\,s}$.
A rather complete analysis about the properties of the second eigenvalue was carried on in \cite{brapar}.

\subsubsection{$p(x)$-Laplacian}
Motivated by nonlinear elasticity theory and electrorheological fluids, problems involving 
variable exponents $p(x)$ were also investigated, especially in regularity theory 
(see e.g.\ \cite{acerbming,acerbming2,Base} and the references therein). Quite recently in \cite{Franzi},
nonlinear eigenvalues were investigated in this framework.
If $p:\overline{\Omega}\to\R^+$ is a log-H\"older continuous function and
 \begin{equation*}
 1<p_-:=\inf_{\Omega}p \leq p(x) \leq \sup_{\Omega}p =: p_+ < n\,\,\quad\mbox{for all $x\in\Omega$},
 \end{equation*} 
 the $m$-th (variational) eigenvalue $\lambda^{m}_{p(x)}$ can be obtained as 
 \begin{equation} 
 \label{px-seq}
 \lambda^{m}_{p(x)}:=\inf_{K\in\mathcal W^{m}_{p(x)}}\sup_{u\in K} \|\nabla u\|_{p(x)},
 \end{equation} 
where  $\|\cdot\|_{p(x)}$ is the Luxemburg norm  defined by
  \begin{equation*} 
  \label{Luxdef}
  \lVert u \rVert_{p(x)}:= 
  \inf \Big\{ \gamma > 0 : \int_{\Omega} \Big| \frac{u(x)}{\gamma} \Big|^{p(x)} dx\leq 1 \Big\}.
  \end{equation*}
and  $\mathcal W^{m}_{p(x)}$ is the set of symmetric, compact subsets of 
 $$
\mathcal{M}_{p(x)}:= \{u\in W^{1,p(x)}_0(\Omega):\|u\|_{p(x)}=1\}
 $$ 
 such that $i(K)\ge m$, where $i$ denotes the genus or any other topological index satisfying properties $(i_1)$--$(i_4)$ listed in Section \ref{sec4}. 
 In \cite{Franzi} existence and properties of the first eigenfunction were studied.
 The stability with respect to uniform perturbations of $p(x)$ was recently investigated in \cite{cs} (see also \cite{bm14}).
Finally,  the growth rate of the sequence in \eqref{px-seq} was investigated in \cite{persqu}, getting a 
 natural replacement for the constant case. 
 
 \subsubsection{Double phases}
 Given two constant exponents $q>p>1$, one can think about the case of a variable exponent $p(x)$ being a smooth approximation of 
 a discontinuous exponent $\bar p:\Omega\to(1,\infty)$ with $\bar p(x)=p$ if $x\in \Omega_1$ and
 $\bar p(x)=q$ if $x\in \Omega_2$, where  $\Omega=\Omega_1\cup \Omega_2$.
 In some sense, this situation can be interpreted as a {\em double phase} 
 behavior in two disjoint sub-domains of $\Omega$. A different kind of {\em double phase} situation occurs 
 for the energy functional
 \begin{equation} 
 \label{doublef-energy}
 u\mapsto \int_{\Omega} {\mathcal H}(x,|\nabla u(x)|)dx,\qquad
 {\mathcal H}(x,t):=t^p+a(x)t^q,\quad q>p>1,\,\,\,  a(\cdot)\geq 0,
 \end{equation}
 where the integrand switches two different elliptic behaviors. This defined in \eqref{doublef-energy} belongs
 to a family of functionals that Zhikov introduced to provide models of strongly
 anisotropic materials, see \cite{zhikov86,zhikov95,zhikov97} or \cite{zhikovBook} and the references therein.
Also,  \eqref{doublef-energy} settle in the context of the so called functionals with non-standard growth conditions,
according to a well-established terminology which was introduced by Marcellini \cite{Marcellini89,Marcellini91}, 
see also \cite{cup1,cup4,cup5,Marcellini96}.
In \cite{zhikovBook}, functionals \eqref{doublef-energy} are
used in the context of homogenization and elasticity  and the function $a$ drives the geometry of a composite of two
different materials with hardening powers $p$ and $q$.

\noindent
Significant progresses were recently achieved in the framework of regularity theory for 
minimisers of this class of integrands of the Calculus of Variations, see 
e.g.\ \cite{BarColMin1,BarColMin2,BarKuuMin,ColMing1,ColMing2}.

\subsection{Main results}
The main goal of this paper is to introduce a suitable notion of eigenpair associated with 
the energy functional \eqref{doublef-energy} consistently with the case $p=q$ and $a\equiv1$,
to prove the existence of an unbounded sequence of eigenvalues and, furthermore, get
continuity of each of these eigenvalues with respect to $(p,q)$ and a Weyl-type law
consistent with the classical (single phase) case.
Let $W^{1,\mathcal H}_0(\Omega)$ be the Musielak-Orlicz space introduced 
in Section~\ref{recalls} and
$$
\mathcal M_{{\mathcal H}}:=\{u\in W^{1,\mathcal H}_0(\Omega)\,:\, \|u\|_\mathcal H=1\},
$$ 
where $\Omega$ is a bounded domain of $\mathbb R^n$.
In the sequel we shall denote this set simply by $\mathcal M$.
\vskip2pt
\noindent
In the next theorems we assume that $1<p<q<n$ and that the following condition holds 
\begin{equation}
\label{cond-pq}
\dfrac{q}p<1+\dfrac{1}{n},\quad\,\,\,
\text{$\partial\Omega$ and $a:\overline{\Omega}\to [0,\infty)$ are Lipschitz continuous.}
\end{equation}

\noindent
The following are the main results of the paper.

\begin{theorem}[The first eigenpair] 
	\label{main1}
The first eigenvalue
	\begin{equation*}
	\lambda_\mathcal H^{1}:=\inf_{u\in {\mathcal M}} \|\nabla u\|_{\mathcal H}
	\end{equation*}
	 is positive and there exists a positive minimizer $u^1_\mathcal H\in \mathcal M\cap L^\infty(\Omega)$ which solves
	 \begin{equation}
	 \label{ELM}
	 \begin{aligned}-\mathrm{div}&\left(p\left|\frac{\nabla u}{\lambda}\right|^{p-2}\frac{\nabla u}{\lambda}+qa(x)\left|\frac{\nabla u}{\lambda}\right|^{q-2}\frac{\nabla u}{\lambda}\right)=\lambda S(u)(p|u|^{p-2}u+qa(x)|u|^{q-2}u),
	 \end{aligned}
	 \end{equation}
	 with $\lambda=\lambda_\mathcal H^{1}$, where
	 \begin{equation}\label{SuM}S(u):=\frac{\displaystyle\int_\Omega\left(p\left|\frac{\nabla u}{\lambda}\right|^p+qa(x)\left|\frac{\nabla u}{\lambda}\right|^q\right)dx}{\displaystyle\int_\Omega(p|u|^p+qa(x)|u|^q)dx}.
	 \end{equation}
The first eigenvalue $\lambda^1_\mathcal H$ is stable under monotonic perturbations of $\Omega$.\ If $a\equiv 1$, furthermore,  balls uniquely minimize the first eigenvalue among sets with a given $n$-dimensional Lebesgue measure. 
Finally, if $a\equiv1$ and, given a polarizer $H$ with $0\in H$ (resp.\ $0\in\partial H$), the domain $\Omega$ coincides with its polarization $\Omega^H$ (resp.\ its reflection $\Omega_H$), then there exists a first nonnegative eigenfunction having the symmetry $u=u^H$.
\end{theorem}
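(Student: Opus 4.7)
\emph{Setup, positivity, existence.} Under \eqref{cond-pq}, the paper's preliminary section supplies three essential facts on $W^{1,\mathcal H}_0(\Omega)$: reflexivity, a compact embedding into $L^{\mathcal H}(\Omega)$, and a Poincar\'e inequality $\|u\|_{\mathcal H}\le C_P\|\nabla u\|_{\mathcal H}$. The last implies at once $\lambda^1_{\mathcal H}\ge 1/C_P>0$. For existence of a minimizer I would use the direct method: a minimizing sequence $(u_k)\subset\mathcal M$ is bounded in $W^{1,\mathcal H}_0$; weak compactness plus weak lower semicontinuity of the Luxemburg norm produce a weak limit $u^1$ with $\|\nabla u^1\|_{\mathcal H}\le\lambda^1_{\mathcal H}$, and the compact embedding preserves the constraint $\|u^1\|_{\mathcal H}=1$. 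Since both Luxemburg norms depend on $u$ only through $|u|$ and $|\nabla u|$, one may replace $u^1$ by $|u^1|$ and assume $u^1\ge 0$, after which the strong maximum principle (or the Harnack inequality for double phase operators) gives $u^1>0$ in $\Omega$. The $L^\infty$-bound is obtained by Moser iteration in the double-phase regularity theory of Colombo--Mingione and Baroni--Colombo--Mingione, where \eqref{cond-pq} is precisely the right assumption.

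\emph{Euler--Lagrange equation.} The Luxemburg norms $\mu(u)=\|u\|_{\mathcal H}$ and $\nu(u)=\|\nabla u\|_{\mathcal H}$ are defined implicitly by the moduli $\int_\Omega\mathcal H(x,|u|/\mu)\,dx=1$ and $\int_\Omega\mathcal H(x,|\nabla u|/\nu)\,dx=1$. Implicit differentiation yields, at $u^1$ with $\mu=1$ and $\nu=\lambda$,
\[
\mu'(u^1)[v]=\frac{\int_\Omega(p|u^1|^{p-2}u^1+qa(x)|u^1|^{q-2}u^1)\,v\,dx}{\int_\Omega(p|u^1|^p+qa(x)|u^1|^q)\,dx},
\]
together with a companion expression for $\nu'(u^1)[v]$ written in terms of $\nabla u^1/\lambda$ and $\nabla v$. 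The constrained critical point relation $\nu'(u^1)[v]=\tau\,\mu'(u^1)[v]$ must hold for some Lagrange multiplier $\tau$, and testing with $v=u^1$ together with the scalings $\nu(su^1)=s\lambda$, $\mu(su^1)=s$ identifies $\tau=\lambda$. Clearing the two denominators---whose ratio is exactly $S(u)$ of \eqref{SuM}---then produces the weak form of \eqref{ELM}.

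\emph{Stability and symmetrization.} Monotonicity $\Omega_1\subset\Omega_2\Rightarrow\lambda^1_{\mathcal H}(\Omega_2)\le\lambda^1_{\mathcal H}(\Omega_1)$ is immediate by extension by zero, and continuity along $\Omega_k\nearrow\Omega$ or $\Omega_k\searrow\Omega$ follows from density of $C_c^\infty$ in $W^{1,\mathcal H}_0(\Omega)$ combined with a cut-off/truncation on candidate minimizers. When $a\equiv 1$ the modular $\int(|u|^p+|u|^q)\,dx$ is rearrangement invariant and so is $\|\cdot\|_{\mathcal H}$; P\'olya--Szeg\H{o} applied to the convex Young function $\mathcal H(t)=t^p+t^q$ yields $\|\nabla u^\ast\|_{\mathcal H}\le\|\nabla u\|_{\mathcal H}$ and $\|u^\ast\|_{\mathcal H}=\|u\|_{\mathcal H}$, proving Faber--Krahn; uniqueness (only balls achieve equality up to translation) comes from Brothers--Ziemer-type rigidity together with $u^1>0$. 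Polarization preserves the modular by pairwise rearrangement of values (hence $\|u^H\|_{\mathcal H}=\|u\|_{\mathcal H}$) and decreases the gradient modular; applied to a positive first eigenfunction on $\Omega=\Omega^H$ this produces a polarized first eigenfunction, and a Brock--Solynin-type selection argument allows one to choose $u^1=(u^1)^H$. The reflection case $0\in\partial H$, $\Omega=\Omega_H$ is treated analogously via two-point symmetrization.

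\emph{Main obstacle.} The most delicate step is the derivation of \eqref{ELM}: because $\|\cdot\|_{\mathcal H}$ is only implicitly defined by the modular, the Euler--Lagrange identity carries the non-constant prefactor $S(u)$, and identifying the Lagrange multiplier with $\lambda$ depends on the scaling homogeneity of $\mu$ and $\nu$. A secondary technical hurdle is the $L^\infty$ bound, which exploits the full strength of the Marcellini-type condition \eqref{cond-pq}; the P\'olya--Szeg\H{o} inequality in the Musielak--Orlicz setting is standard once $a\equiv 1$, but must be invoked carefully.
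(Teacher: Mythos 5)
Your proposal is correct and follows essentially the same route as the paper: Poincar\'e inequality for positivity, the direct method with the compact embedding $W^{1,\mathcal H}_0(\Omega)\hookrightarrow\hookrightarrow L^{\mathcal H}(\Omega)$ for existence, differentiation of the Luxemburg norm through its defining modular (with the multiplier identified as $\lambda=K(u)/k(u)$ by testing with $u$) for the Euler--Lagrange equation, a Caccioppoli/De Giorgi argument feeding the weak Harnack inequality of Baroni--Colombo--Mingione for strict positivity, a bootstrap for the $L^\infty$ bound, extension by zero plus density for domain stability, and P\'olya--Szeg\H{o} with the unit ball property for Faber--Krahn. Two cosmetic differences: polarization actually \emph{preserves} (not merely decreases) the gradient modular, and the paper extracts the polarized minimizer via the symmetric Ekeland variational principle rather than a Brock--Solynin selection, but both devices serve the same purpose.
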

\smallskip

\noindent
The previous theorem is a consequence of a Poincar\'e-type inequality, which has been proved in the general framework of Musielak-Orlicz spaces in \cite{FanImbedding,HHK}.

\noindent
It is important to stress that, due to the presence of the term $S(u)$, equation~\eqref{ELM} turns out to be nonlocal. A similar nonlocal character arises in the case of the $p(x)$-Laplacian. 

\noindent
It would be interesting to investigate the {\em simplicity} of the first eigenvalue as well
as understanding if an arbitrary eigenfunction of {\em fixed sign} is automatically a first eigenfunction.
These issues are known to hold in the cases of the $p$-Laplacian and the fractional $p$-Laplacian
but, to the authors' knowledge, no result seems to be available in inhomogeneous settings like
the $p(x)$-Laplacian or the double phase operators. 
\vskip3pt
\noindent
In what follows, $i$ denotes the genus or any topological index satisfying $(i_1)$--$(i_4)$ in Section~\ref{sec4}. 

\begin{theorem}[Nonlinear spectrum]\label{nonlinspec} 
If $\mathcal W^{m}_\mathcal H$ is the set of symmetric compacts $K$ of 
	 ${\mathcal M}$ with index $i(K)\ge m$, then the sequence
	 $$
	 \lambda^{m}_\mathcal H:=\inf_{K\in\mathcal W^{m}_\mathcal H}\sup_{u\in K}\|\nabla u\|_\mathcal H,
	 $$
	 is non-decreasing, divergent, and for every $m\ge1$ there exists 
	 $u_m\in \mathcal M$ solving equation \eqref{ELM}, with 
	 $\lambda=\lambda_\mathcal H^{m}$.
\end{theorem}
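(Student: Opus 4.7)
The strategy is the classical Ljusternik-Schnirelman scheme, adapted to the Luxemburg norm on the Musielak-Orlicz space $W^{1,\mathcal{H}}_0(\Omega)$. The monotonicity $\lambda^{m}_{\mathcal H}\le\lambda^{m+1}_{\mathcal H}$ is immediate: by the monotonicity axiom $(i_1)$ for the index, $A\subset B$ implies $i(A)\le i(B)$, so $\mathcal W^{m+1}_{\mathcal H}\subset\mathcal W^{m}_{\mathcal H}$ and the infimum over a smaller class is larger.

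For the existence of a critical point at each level $\lambda^m_{\mathcal H}$, I would view $J(u):=\|\nabla u\|_{\mathcal H}$ as an even $C^1$ functional on the $C^1$ Banach manifold $\mathcal M$. The analytic core is the Palais-Smale condition on $\mathcal M$: given $(u_n)\subset\mathcal M$ with $J(u_n)\to c$ and the constrained differential $dJ|_{T_{u_n}\mathcal M}\to 0$, reflexivity of $W^{1,\mathcal H}_0(\Omega)$ first produces a weakly convergent subsequence $u_n\rightharpoonup u$; the compact embedding $W^{1,\mathcal H}_0(\Omega)\hookrightarrow L^{\mathcal H}(\Omega)$---guaranteed by $q/p<1+1/n$ together with the Lipschitz regularity of $a$ and $\partial\Omega$ in \eqref{cond-pq}---then upgrades this to strong convergence in $L^{\mathcal H}$, and an $(S_+)$-monotonicity argument for the double-phase operator promotes it to strong convergence in $W^{1,\mathcal H}_0$. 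Once Palais-Smale is established, a standard deformation-lemma argument gives that $\lambda^m_{\mathcal H}$ is a critical value of $J|_{\mathcal M}$: were it not, an odd Lipschitz pseudo-gradient flow would push a near-optimal $K\in\mathcal W^m_{\mathcal H}$ strictly below $\lambda^m_{\mathcal H}$ while preserving the index bound via axioms $(i_3)$--$(i_4)$, contradicting the definition of $\lambda^m_{\mathcal H}$. Differentiating the constraint $\int_\Omega\mathcal H(x,|\nabla u|/\|\nabla u\|_{\mathcal H})\,dx=1$ on $\mathcal M$ then yields exactly equation \eqref{ELM} with multiplier $S(u_m)$ as in \eqref{SuM} and $\lambda=\lambda^m_{\mathcal H}$.

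For divergence, I would argue by contradiction: if $\lambda^m_{\mathcal H}\le C$ uniformly in $m$, choose $K_m\in\mathcal W^m_{\mathcal H}$ with $K_m\subset B:=\{u\in\mathcal M:\|\nabla u\|_{\mathcal H}\le C+1\}$. The set $B$ is bounded in $W^{1,\mathcal H}_0(\Omega)$, hence precompact in $L^{\mathcal H}(\Omega)$ by the compact embedding, and its $L^{\mathcal H}$-closure $\overline B$ is a symmetric compact subset of the unit sphere of $L^{\mathcal H}(\Omega)$, so in particular $0\notin\overline B$. A standard finite-dimensional $\varepsilon$-net approximation then forces $i(\overline B)<\infty$, contradicting $m\le i(K_m)\le i(\overline B)$ by monotonicity of $i$. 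The main obstacle I expect is the Palais-Smale verification for $J$ on $\mathcal M$, since one must control the interaction of the two elliptic phases within the less standard structure of Musielak-Orlicz spaces and hybridize the $(S_+)$ property of each phase through the compact embedding made available by the balance condition $q/p<1+1/n$ in \eqref{cond-pq}.
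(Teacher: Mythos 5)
Your proposal is correct and follows the same Ljusternik--Schnirelman scheme as the paper: the whole theorem rests on the Palais--Smale condition for $K|_{\mathcal M}$, which the paper isolates as Theorem~\ref{PS} and then feeds into the abstract minimax theorem (Theorem 5.11 of \cite{Struwe}, Propositions 3.52--3.53 of \cite{PAORbook}). Two of your steps genuinely diverge from the paper's execution. First, for (PS) the paper does not use an $(S_+)$-type monotonicity argument: since $K(u)=\|\nabla u\|_{\mathcal H}$ is a convex $C^1$ functional, it writes $\|\nabla u_h\|_{\mathcal H}\le\|\nabla u\|_{\mathcal H}+\langle K'(u_h),u_h-u\rangle$, kills the bracket via the compact embedding $W^{1,\mathcal H}_0(\Omega)\hookrightarrow\hookrightarrow L^{\mathcal H}(\Omega)$ and the bound $|\langle k'(u),v\rangle|\le q\|v\|_{\mathcal H}$ (Lemma~\ref{dominazione}), deduces norm convergence from weak lower semicontinuity, and concludes by uniform convexity of the space (Proposition~\ref{reflexivity}). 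This is cheaper than the $(S_+)$ route you outline, which would require the vector inequalities the paper only deploys later for the closedness of the spectrum (Theorem~\ref{closed}); both routes work. Second, your divergence argument is self-contained: a uniform bound $\lambda^m_{\mathcal H}\le C$ traps sets of arbitrarily large index inside a set whose $L^{\mathcal H}$-closure is compact, symmetric and bounded away from $0$, hence of finite index by $(i_1)$ together with the monotonicity furnished by $(i_2)$--$(i_3)$. The paper instead disposes of divergence in one line by appealing to the cited abstract theorem and to $i(\mathcal M)=\infty$; your argument is more explicit and arguably more informative.

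Two small slips, neither of which affects correctness. The inclusion $\mathcal W^{m+1}_{\mathcal H}\subseteq\mathcal W^{m}_{\mathcal H}$ follows directly from the definition (the condition $i(K)\ge m+1$ implies $i(K)\ge m$); it does not require, and is not what property $(i_1)$ asserts --- $(i_1)$ only says the index is a positive integer, and monotonicity under inclusion of compact sets comes from $(i_2)$. And the constraint defining $\mathcal M$ is $\varrho_{\mathcal H}(u)=1$ (equivalently $\|u\|_{\mathcal H}=1$), not a condition on $\nabla u$; the Lagrange multiplier computation producing \eqref{ELM} with the factor $S(u)$ is exactly the content of Propositions~\ref{kC1} and~\ref{KC1}.
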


\noindent
The result provides the construction of a variational spectrum for the double phase integrands
which is consistent with the single phase case $p=q$, see e.g. \cite{DM14,DM15}. The proof relies on the properties of the topological index $i$ and is based on the fact that the even functional $K|_\mathcal M: u\in\mathcal M\mapsto \|\nabla u\|_\mathcal H$ satisfies the (PS) condition. As for the $p$-Laplacian, it is not known whether the variational spectrum (i.e. the sequence of variational eigenvalues $(\lambda^m_\mathcal H)$) exhausts the whole spectrum (i.e. the set of all eigenvalues of \eqref{ELM}, see Definition \ref{eigen}) or not. In Theorem \ref{closed}, we prove that the spectrum is a closed set. While the following two results concern only the variational spectrum.  

\begin{theorem}[Stability]\label{main2} 
Let $(p_h,q_h)\searrow(p,q)$ as $h\to\infty$. Then
	$$
	\lim_{h\to\infty}\lambda^{m}_{\mathcal H_h}=\lambda^{m}_\mathcal H
	\quad\,\,\,\text{for every $m\geq 1$},
	$$
	where ${\mathcal H}_h(x,t):=t^{p_h}+a(x)t^{q_h}$ and ${\mathcal H}(x,t):=t^p+a(x)t^q$.
\end{theorem}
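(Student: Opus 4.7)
\smallskip
\noindent
\textbf{Plan of proof.} The strategy is the standard sandwich argument for stability of variational eigenvalues, in the spirit of \cite{stab,DM14,DM15,parini2011,Champ,cs}: I would prove $\limsup_h\lambda^m_{\mathcal H_h}\le\lambda^m_\mathcal H$ and $\liminf_h\lambda^m_{\mathcal H_h}\ge\lambda^m_\mathcal H$ separately. The common device is to transfer a nearly optimal admissible set for one minimax problem into an admissible set for the other by renormalizing onto the correct Luxemburg unit sphere; since this renormalization is an odd homeomorphism (away from the origin), the monotonicity of the topological index $i$ preserves the lower bound $i(\cdot)\ge m$.

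For the $\limsup$, I would fix $\varepsilon>0$ and choose $K\in\mathcal W^m_\mathcal H$ with $\sup_K\|\nabla u\|_\mathcal H\le\lambda^m_\mathcal H+\varepsilon$. The density of $C^\infty_c(\Omega)$ in $W^{1,\mathcal H}_0(\Omega)$, together with the monotonicity of $i$, allows one to reduce to the case $K\subset C^\infty_c(\Omega)$, so that $K\subset W^{1,\mathcal H_h}_0(\Omega)$ for every $h$. Setting $T_h(v):=v/\|v\|_{\mathcal H_h}$, the set $T_h(K)$ belongs to $\mathcal W^m_{\mathcal H_h}$, giving
\[
\lambda^m_{\mathcal H_h}\le\sup_{v\in K}\frac{\|\nabla v\|_{\mathcal H_h}}{\|v\|_{\mathcal H_h}}.
\]
Since $K$ consists of smooth compactly supported functions and $(p_h,q_h)\searrow(p,q)$, dominated convergence yields $\|v\|_{\mathcal H_h}\to\|v\|_\mathcal H=1$ and $\|\nabla v\|_{\mathcal H_h}\to\|\nabla v\|_\mathcal H$ uniformly on $K$; hence $\limsup_h\lambda^m_{\mathcal H_h}\le\lambda^m_\mathcal H+\varepsilon$.

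For the $\liminf$, I would choose $K_h\in\mathcal W^m_{\mathcal H_h}$ with $\sup_{K_h}\|\nabla u\|_{\mathcal H_h}\le\lambda^m_{\mathcal H_h}+\varepsilon$. Since $p_h\ge p$, $q_h\ge q$ and $a$ is bounded on $\Omega$, the inclusion $W^{1,\mathcal H_h}_0(\Omega)\hookrightarrow W^{1,\mathcal H}_0(\Omega)$ holds with constants uniformly bounded in $h$, and the renormalized set $\tilde K_h:=\{v/\|v\|_\mathcal H:v\in K_h\}$ belongs to $\mathcal W^m_\mathcal H$, whence
\[
\lambda^m_\mathcal H\le\sup_{v\in K_h}\frac{\|\nabla v\|_\mathcal H}{\|v\|_\mathcal H}.
\]
The hard part is that $K_h$ varies with $h$, so compactness cannot be directly invoked to compare $\|\cdot\|_{\mathcal H_h}$ and $\|\cdot\|_\mathcal H$ uniformly on $K_h$. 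I would overcome this by a quantitative modular estimate showing $|\rho_{\mathcal H_h}(v)-\rho_\mathcal H(v)|\to 0$ uniformly for $v\in K_h$: splitting the integral on $\{|v|\le 1\}$, where the pointwise bound $\bigl||v|^{p_h}-|v|^p\bigr|\le(p_h-p)|v|^p\bigl|\log|v|\bigr|$ is integrable on the bounded domain $\Omega$, and on $\{|v|>1\}$, where the Sobolev embedding $W^{1,\mathcal H}_0(\Omega)\hookrightarrow L^{p^*}(\Omega)$ (available from $p<n$ and \eqref{cond-pq}), combined with the uniform $W^{1,\mathcal H}_0$-bound from $\|\nabla v\|_{\mathcal H_h}\le\lambda^m_{\mathcal H_h}+\varepsilon$, controls the tail. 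An analogous estimate handles the gradients. Translating the resulting modular smallness into the Luxemburg norms and letting $\varepsilon\to 0$ completes the argument.
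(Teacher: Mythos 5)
Your sandwich argument is correct in outline and the key quantitative step checks out, but it follows a genuinely different route from the paper. The paper does not manipulate the admissible classes $\mathcal W^m_{\mathcal H_h}$ directly: it extends $u\mapsto\|\nabla u\|_{\mathcal H_h}$ and $u\mapsto\|u\|_{\mathcal H_h}$ to functionals $\mathscr E_{\mathcal H_h}$, $g_{\mathcal H_h}$ on the fixed ambient space $L^1(\Omega)$, proves that $\mathscr E_{\mathcal H_h}$ $\Gamma$-converges to $\mathscr E_{\mathcal H}$ in $L^1$ (Theorem~\ref{gammaconv}) together with an equi-coercivity statement guaranteeing $g_{\mathcal H_{h}}(u_h)\to g_{\mathcal H}(u)$ along sequences with bounded energy (Theorem~\ref{teo2}, which relies on Lemmas~\ref{emb} and \ref{convergence}), and then invokes the abstract stability results of Degiovanni--Marzocchi and Franzina--Lindqvist (Corollaries 4.4 and 3.3 of \cite{Franzi}) to transfer the convergence to the minimax values. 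What that machinery buys is precisely the two points your direct argument must handle by hand. Your $\liminf$ half does this convincingly: the renormalization $v\mapsto v/\|v\|_\mathcal H$ combined with $(i_3)$, the uniform embedding constant of Lemma~\ref{emb}, and your uniform modular estimate (the $\log$-bound on $\{|v|\le 1\}$ and the $L^{p^*}$-tail bound, which is legitimate since \eqref{cond-pq} forces $q<p^*$ and $q_h\searrow q$) give $\|v\|_\mathcal H\ge(1-\eta_h)^{1/p}$ uniformly on $K_h$, which is all you need. The one step you assert rather than prove is in the $\limsup$ half: reducing an arbitrary $K\in\mathcal W^m_\mathcal H$ to a compact set of smooth functions while keeping $i\ge m$ and the sup of the energy under control. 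Density of $C^\infty_c(\Omega)$ alone does not do this; you need an odd \emph{continuous} approximation map (e.g.\ a symmetrized partition-of-unity projection onto the span of finitely many smooth functions close to a finite net of $K$, followed by $(i_3)$), and placing the approximant in a finite-dimensional span is also what makes $T_h(K)$ compact in the \emph{stronger} $W^{1,\mathcal H_h}_0$-topology and makes the convergence $\|\nabla v\|_{\mathcal H_h}\to\|\nabla v\|_\mathcal H$ of Lemma~\ref{conver} uniform on $K$. This is exactly the point the paper's $\Gamma-\limsup$ inequality plus the abstract corollaries are designed to absorb; once you supply that finite-dimensional approximation, your more elementary and self-contained proof goes through.
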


\noindent
The result implies, in particular, that each element of the sequence of nonlinear eigenvalues $(\lambda^{m}_{\mathcal H_h})$ for the double phase
case converges to the corresponding nonlinear eigenvalue for the $p_0$-Laplacian operator, whenever 
$(p_h,q_h)\searrow(p_0,p_0)$, as $h\to\infty$. The proof of Theorem \ref{main2} uses some recent results of \cite{DM14} and involves the $\Gamma$-convergence of a class of even functionals defined in $L^1(\Omega)$.

\vskip3pt
\noindent
In what follows, for any $E\subset \N$, we shall denote by $\sharp E$ the 
number of elements of $E$. Furthermore, $A\subset \mathbb R^n$ is called {\it quasi-convex} if there exists a constant $C>0$ such that for all $x,\,y\in A$ there is an arc joining $x$ to $y$ in $A$ having length at most $C|x-y|$.

\begin{theorem}[Weyl law]  
\label{weyl}
Let $\Omega$ be quasi-convex and let us set 
$$
w:=1+\|a\|_\infty+|\Omega|,\qquad  
\sigma:=n\left(\dfrac1p-\dfrac1q\right).
$$
Let $\lambda_\mathcal H^{m}$ be defined either through the genus $\gamma$ or through the $\mathbb Z_2$-cohomological index $g$.
Then, there exist $C_1,C_2>0$, depending only on $n$, $p$, $q$, such that 
$$
C_1|\Omega|(\lambda/w)^{n/(1+\sigma)}\le \sharp\big\{m\in\N: \lambda_\mathcal H^{m}
<\lambda\big\} \le C_2 |\Omega|(w\lambda)^{n/(1-\sigma)},
$$
for $\lambda>0$ large. In particular there exist 
$D_1,D_2>0$ depending on $n$, $p$, $q$, $a$ and $|\Omega|$ with 
$$
D_1 m^{(1-\sigma)/n}\leq \lambda_{\mathcal H}^m\leq D_2 m^{(1+\sigma)/n}
$$
for every $m\geq 1$ large enough.
\end{theorem}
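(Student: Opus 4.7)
The plan is to follow the classical García Azorero--Peral strategy for the $p$-Laplacian Weyl law \cite{peralGar}, adapted to the non-homogeneous Musielak--Orlicz scale. The whole apparatus rests on two elementary ingredients: the pointwise sandwich $t^p \le \mathcal H(x,t) \le t^p + \|a\|_\infty t^q$, and the fact that $u \in \mathcal M$ forces the modular identity $\rho(u) := \int_\Omega(|u|^p + a(x)|u|^q)\,dx = 1$. The non-homogeneous Luxemburg scaling then relates $\|\cdot\|_{\mathcal H}$ to the $L^p$ and $L^q$ norms by correction factors whose exponents depend on whether the norm is above or below $1$, and this is precisely the mechanism that produces the exponent shift $\pm\sigma = \pm n(1/p-1/q)$ with respect to the classical Weyl exponent $n/p$, together with the $w$-dependence of the constants.

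For the upper estimate on the counting function, fix $K\in\mathcal W^m_\mathcal H$ with $\sup_K\|\nabla u\|_\mathcal H<\lambda$. The Luxemburg definition gives $\rho(\nabla u/\lambda)\le 1$, so $\|\nabla u\|_p^p\le\max(\lambda^p,\lambda^q)$ uniformly on $K$. Combining $\rho(u)=1$ with the subcritical Sobolev inequality $\|u\|_q\le C\|\nabla u\|_p$---available because \eqref{cond-pq} forces $q<p^*=np/(n-p)$---yields a uniform lower bound $\|u\|_p\ge c(w,\lambda)>0$ on $K$, making $u\mapsto u/\|u\|_p$ an odd continuous map of $K$ onto a symmetric compact $\tilde K\subset\mathcal M_p$ with $i(\tilde K)\ge m$ by the odd-map invariance $(i_2)$ of the topological index. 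Bounding $\sup_{\tilde K}\|\nabla v\|_p^p$ by an explicit power of $w\lambda$ and inserting into the classical upper Weyl bound $\#\{m:\lambda^m_p<\mu\}\le C|\Omega|\mu^{n/p}$ yields, after simplification, $\#\{m:\lambda^m_\mathcal H<\lambda\}\le C_2|\Omega|(w\lambda)^{n/(1-\sigma)}$.

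For the lower estimate, exploit quasi-convexity of $\Omega$ to partition it into $N$ pairwise disjoint quasi-cubes $Q_1,\dots,Q_N$ of diameter $\asymp(|\Omega|/N)^{1/n}$ and volumes at least $c|\Omega|/N$. On each $Q_j$ pick a Lipschitz bump $\varphi_j$ supported in $Q_j$, with $\|\varphi_j\|_\mathcal H=1$ and pairwise disjoint supports. The set $K:=\mathcal M\cap\mathrm{span}\{\varphi_1,\dots,\varphi_N\}$ is a symmetric compact subset of $\mathcal M$ with $i(K)=N$, by the dimension-type property of the index valid for both the genus and the $\mathbb Z_2$-cohomological index $g$. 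A direct modular computation, exploiting the disjointness of supports together with the Luxemburg inequalities, gives $\sup_K\|\nabla u\|_\mathcal H\le C\,w\,|\Omega|^{-1/n}N^{(1+\sigma)/n}$, hence $\lambda^N_\mathcal H$ is at most the same quantity. Inverting yields $\#\{m:\lambda^m_\mathcal H<\lambda\}\ge C_1|\Omega|(\lambda/w)^{n/(1+\sigma)}$ for $\lambda$ large; the pointwise bilateral bound $D_1m^{(1-\sigma)/n}\le\lambda^m_\mathcal H\le D_2m^{(1+\sigma)/n}$ follows by inverting both counting estimates.

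The min-max/genus arguments and the geometric partition of $\Omega$ are classical; the main obstacle is the precise bookkeeping of the modular-norm inequalities, since every dilation in $L^\mathcal H$ introduces a non-trivial power depending on the modular regime. Tracking these powers so as to recover exactly the exponents $n/(1\pm\sigma)$---rather than suboptimal approximations---is the delicate part. In particular, producing the uniform lower bound $\|u\|_p\ge c(w,\lambda)$ on $K$ in the upper-count step is where the Sobolev sub-criticality $q<p^*$ enters, justifying the assumption \eqref{cond-pq}.
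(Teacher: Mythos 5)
Your proposal is correct in outline but follows a genuinely different route from the paper. The paper (adapting \cite{persqu}) never reduces to the single-exponent $p$-Laplacian: it sandwiches the Rayleigh quotient between the two \emph{mismatched} quotients $\|\nabla u\|_p/\|u\|_q$ and $\|\nabla u\|_q/\|u\|_p$ (Lemma \ref{le1}), observes that these scale like $\delta^{\sigma-1}$ and $\delta^{-\sigma-1}$ under homotheties (Lemma \ref{le3}), and then runs a Dirichlet--Neumann bracketing over cube decompositions: the cogenus $\overline{\gamma}$ gives the superadditive lower count, the genus $\gamma$ gives the subadditive upper count through an auxiliary problem on $W^{1,\mathcal H}(\Omega)$ (this is where quasi-convexity and the density of $C^\infty(\overline{\Omega})$ enter, via Lemma \ref{le2} and Proposition \ref{density}), and the chain $\overline{\gamma}\le g\le\gamma$ together with $i(\widetilde{K}^\lambda)=\sharp\{m:\lambda^m_\mathcal H<\lambda\}$ glues everything. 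Your upper bound instead pushes sublevel sets into the classical $p$-Laplacian constraint manifold and quotes the known Weyl law \cite{MR1017063,peralGar} as a black box, and your lower bound builds finite-dimensional spheres of disjointly supported bumps directly; both are legitimate and arguably more elementary. (For the lower bound you should not claim a partition of $\Omega$ into quasi-cubes of comparable inradius --- quasi-convexity does not guarantee that; it suffices, as in the paper's Step 2, to inscribe $N$ disjoint grid cubes of side $\asymp(|\Omega|/N)^{1/n}$ covering most of $|\Omega|$.)

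The one step that does not work as written is the uniform lower bound $\|u\|_p\ge c(w,\lambda)$ in the upper-count argument. From $\varrho_\mathcal H(u)=1$ and the plain Sobolev inequality $\|u\|_q\le C\|\nabla u\|_p\le C\lambda$ you only obtain $1\le\|u\|_p^p+\|a\|_\infty C^q\lambda^q$, which is vacuous for large $\lambda$. What is actually needed is the interpolation $\|u\|_q\le\|u\|_p^{1-\sigma}\|u\|_{p^*}^{\sigma}$ (the interpolation exponent is exactly $1-\sigma$ because $1/q=(1-\sigma)/p+\sigma/p^*$), which combined with $\|u\|_{p^*}\le S\|\nabla u\|_p\le S\lambda$ yields $1\le\|u\|_p^p+\|a\|_\infty(S\lambda)^{\sigma q}\|u\|_p^{(1-\sigma)q}$ and hence $\|u\|_p\ge c\,\lambda^{-\sigma/(1-\sigma)}$ for $\lambda$ large. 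Only this produces $\sup_{\tilde K}\|\nabla v\|_p\le C\lambda^{1/(1-\sigma)}$ and therefore the exponent $n/(1-\sigma)$; without it your ``after simplification'' has nothing to simplify. Two minor bookkeeping points: in the lower bound the correct power is $|\Omega|^{-(1+\sigma)/n}N^{(1+\sigma)/n}$ rather than $|\Omega|^{-1/n}N^{(1+\sigma)/n}$; and one must check that the $\|a\|_\infty$-dependence of the constant $c$ above is absorbed by the factor $w^{n/(1-\sigma)}$ in the statement, which it is, since $\|a\|_\infty\le w$ and the power of $\|a\|_\infty$ arising from the interpolation is $n/((1-\sigma)q)<n/(1-\sigma)$.
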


\noindent
The result provides a consistent extension of the Weyl-type law for the $p$-Laplacian. 
We point out that, in the limiting case when $p=q$ and $a\equiv 1$, the Euler-Lagrange equation~\eqref{ELM} (see also the formulation \eqref{EL}) reduces to the usual quasi-linear problem \eqref{p-constant} involving $-\Delta_p$, but the eigenvalues $\lambda^m_{\mathcal H}$ and $\lambda_p^m$, in light of their definition, do satisfy
$$
\lambda^m_{\mathcal H}=(\lambda_p^m)^{\frac{1}{p}}\,\,
\quad\text{for all $m\in\N$.}
$$
Hence, the estimate for the growth of $(\lambda_{\mathcal H}^m)$ of Theorem~\ref{weyl} for the case $p=q$ 
(formally corresponding to $\sigma=0$)
 $$
 D_1 m^{1/n}\leq \lambda_{\mathcal H}^m\leq D_2 m^{1/n}
 $$
 should be compared with the estimate $D_1 m^{p/n}\leq \lambda_p^m\leq D_2 m^{p/n}$ and, thus, it is
consistent with the results obtained in \cite{MR1017063,peralGar}. For the linear case
$q=p=2$ and $a=1$, we mention the pioneering contribution by H.\ Weyl \cite{weyl},
from which these type of estimates inherit the name.
The proof of Theorem \ref{weyl} relies on the properties and on the relations among three different topological indices (i.e. the genus, the cogenus, and the cohomological index) and is an adaptation to the double phase setting of an idea developed in \cite{persqu} for the $p(x)$-Laplacian operator. 

\subsection{Plan of the paper}
In Section \ref{recalls} we give some basic definitions and useful results on Musielak-Orlicz spaces and in particular on the spaces generated by the $N$-function $\mathcal H$ as in \eqref{doublef-energy}.
In Section \ref{sec3} we derive the Euler-Lagrange equation corresponding to the minimization of the Rayleigh ratio $\|\nabla u\|_\mathcal H/\|u\|_\mathcal H$ and we prove Theorem \ref{main1} concerning the first eigenpair $(\lambda^1_\mathcal H,u^1_\mathcal H)$ and some useful properties of the spectrum, such as its closedness and the behavior of the first eigenvalue for large exponents $p$ and $q$. 
Section \ref{sec4} contains the definition of the variational eigenvalues of \eqref{ELM} and the proof of Theorem~\ref{nonlinspec}, while
Section \ref{sec5} is devoted to the proof, via $\Gamma$-convergence, of the stability of the nonlinear spectrum (Theorem \ref{main2}), i.e. the continuity of the eigenvalues with respect to the variation of the phases $p$ and $q$ from the right. Finally, in 
Section \ref{sec6} we study the asymptotic growth of the variational eigenvalues and prove Theorem~\ref{weyl}.

\section*{Acknowledgments} 
\noindent
The authors warmly thank Giuseppe Mingione for various suggestions about the problem and 
its related literature. Paolo Baroni is also acknowledged for some hints on the application of the  
Harnack inequality of \cite{BarColMin1} in the proof of Theorem~\ref{main1}.
The statement and the proof of Lemma \ref{brasco1} are attributed to Lorenzo Brasco. 
Part of this work was written during a visit of Francesca Colasuonno  
at the Department of Computer Science of the University of  Verona.\ The hosting institution is gratefully acknowledged.
The research  was partially supported by Gruppo Nazionale per l'Analisi Matematica,
la Probabilit\`a e le loro Applicazioni (INdAM).

\smallskip

\section{Preliminary results}\label{recalls}
\subsection{Musielak-Orlicz spaces} We recall here some notions on Musielak-Orlicz spaces, see for reference \cite{Musielak}, Section 2 of \cite{Base}, and also Section 1 of \cite{Fan12}.  
Let $\Omega\subset\mathbb R^n$ be a bounded domain.  
\begin{definition}\rm A continuous, convex function $\varphi:[0,\infty)\to [0,\infty)$ is called {\it $\Phi$-function} if $\varphi(0)=0$ 
and $\varphi(t)>0$ for all $t>0$.
\medskip

\noindent
A function $\varphi:\Omega\times[0,\infty)\to [0,\infty)$ is said to be a {\it generalized $\Phi$-function}, denoted by $\varphi\in\Phi(\Omega)$, if $\varphi(\cdot,t)$ is measurable for all $t\ge0$ and $\varphi(x,\cdot)$ is a $\Phi$-function for a.a. $x\in\Omega$. 
\medskip

\noindent
$\varphi\in\Phi(\Omega)$ is {\it locally integrable} if $\varphi(\cdot,t)\in L^1(\Omega)$ for all $t>0$.
\medskip 

\noindent
$\varphi\in\Phi(\Omega)$ satisfies the {\it $(\Delta_2)$-condition} if there exist a positive constant $C$ and a nonnegative function $h\in L^1(\Omega)$ such that 
$$
\varphi(x,2t)\le C\varphi(x,t)+h(x)\quad\mbox{for a.a. }x\in\Omega\mbox{ and all }t\in[0,\infty).
$$
Let $\varphi,\,\psi\in \Phi(\Omega)$. The function $\varphi$ {\it is weaker than} $\psi$, denoted by $\varphi\preceq \psi$, if there exist two positive constants $C_1,\, C_2$ and a nonnegative function $h\in L^1(\Omega)$ such that 
$$\varphi(x,t)\le C_1\psi(x,C_2 t)+h(x)\quad\mbox{for a.a. }x\in\Omega\mbox{ and all }t\in [0,\infty).$$
\end{definition}

\noindent
Given $\varphi\in \Phi(\Omega)$, the {\it Musielak-Orlicz space} $L^\varphi(\Omega)$ is given by
$$L^\varphi(\Omega):=\left\{u:\Omega\to\mathbb R\mbox{ measurable }:\,\exists \,\gamma>0\mbox{ s.t. }\varrho_\varphi(\gamma u)<\infty \right\},$$
where $$\varrho_\varphi(u):=\int_\Omega\varphi(x,|u|) dx$$ is the modular, while 
$$\|u\|_\varphi:=\inf\left\{\gamma>0\,:\,\varrho_\varphi(u/\gamma) \le 1\right\}$$
is the norm defined on $L^\varphi(\Omega)$.

\begin{proposition}\label{Banach}{\rm (cf.\ \cite[Theorem 7.7]{Musielak})} 
Let $\varphi\in \Phi(\Omega)$, then $(L^\varphi,\|\cdot\|_\varphi)$ is a Banach space.
\end{proposition}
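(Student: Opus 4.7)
The plan is standard for Luxemburg-type norms: verify that $(L^\varphi, \|\cdot\|_\varphi)$ is a normed vector space, and then prove completeness via a pointwise subsequence argument. The nontrivial part of the first step is the triangle inequality, which together with closure of $L^\varphi$ under addition follows from convexity of $\varphi(x,\cdot)$: if $\gamma_i > \|u_i\|_\varphi$, so that $\varrho_\varphi(u_i/\gamma_i)\le 1$, then convexity yields
$$
\varrho_\varphi\!\left(\frac{u_1+u_2}{\gamma_1+\gamma_2}\right)
\le \frac{\gamma_1}{\gamma_1+\gamma_2}\,\varrho_\varphi\!\left(\frac{u_1}{\gamma_1}\right)+\frac{\gamma_2}{\gamma_1+\gamma_2}\,\varrho_\varphi\!\left(\frac{u_2}{\gamma_2}\right)\le 1,
$$
hence $\|u_1+u_2\|_\varphi\le \gamma_1+\gamma_2$, and one takes the infimum. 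Positive homogeneity is immediate from the definition by a change of variable, while definiteness uses $\varphi(x,t)>0$ for $t>0$ (combined with monotonicity of $\varphi$ in $t$, which is forced by convexity and $\varphi(0)=0$) to conclude that $\varrho_\varphi(u/\gamma)\le 1$ for every $\gamma>0$ can only hold when $u=0$ a.e.

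For completeness, I would take a Cauchy sequence $(u_n)\subset L^\varphi$ and extract a subsequence $(u_{n_k})$ with $\|u_{n_{k+1}}-u_{n_k}\|_\varphi\le 2^{-k}$. The key construction is the monotone sequence of partial sums $v_k:=\sum_{j=1}^{k}|u_{n_{j+1}}-u_{n_j}|$, for which the triangle inequality gives $\|v_k\|_\varphi\le 1$ and hence $\varrho_\varphi(v_k)\le 1$. Passing to the pointwise limit $v:=\lim_k v_k$ (which exists a.e.\ in $[0,+\infty]$ by monotonicity) and invoking monotone convergence together with continuity of $\varphi(x,\cdot)$, one obtains $\varrho_\varphi(v)\le 1$, so $v$ is finite almost everywhere. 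Consequently $(u_{n_k}(x))$ is Cauchy in $\R$ for a.e.\ $x$ and converges a.e.\ to a measurable function $u$; moreover $|u_{n_k}|\le |u_{n_1}|+v$ a.e., which controls the tail uniformly.

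Finally, to recover norm convergence, I would fix $\eps>0$ and choose an index $k_0$ so large that $\varrho_\varphi\bigl((u_{n_k}-u_{n_m})/\eps\bigr)\le 1$ for all $k,m\ge k_0$. Sending $m\to\infty$ along the a.e.\ convergence $u_{n_m}\to u$ and applying Fatou's lemma to the nonnegative integrand $\varphi\bigl(x,|u_{n_k}(x)-u_{n_m}(x)|/\eps\bigr)$ (continuous in its second argument) yields $\varrho_\varphi\bigl((u_{n_k}-u)/\eps\bigr)\le 1$, whence $\|u_{n_k}-u\|_\varphi\le \eps$. This simultaneously shows $u\in L^\varphi$ and $u_{n_k}\to u$ in norm; since the original sequence is Cauchy and admits a convergent subsequence, the full sequence converges to $u$. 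The main subtlety is precisely the bridge from norm-Cauchy to a.e.\ Cauchy, which in the general Musielak-Orlicz setting (where no $(\Delta_2)$-condition is assumed and norm and modular convergence need not coincide) forces the telescoping construction of the dominating function $v$ rather than a direct modular-type argument.
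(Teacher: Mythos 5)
Your proof is correct. Note, however, that the paper does not actually prove this proposition: it is quoted from the literature (Musielak, Theorem 7.7), so there is no internal argument to compare against. Your self-contained proof is the standard Riesz--Fischer-type argument for Luxemburg norms: the triangle inequality and the closure of $L^\varphi(\Omega)$ under addition via convexity of $\varphi(x,\cdot)$, and completeness via a rapidly Cauchy subsequence, the telescoping dominating function $v=\sum_j|u_{n_{j+1}}-u_{n_j}|$ controlled by the unit ball property, and Fatou's lemma to pass from a.e.\ convergence back to norm convergence. This is in fact exactly the device the paper itself uses later, in the proof of Lemma \ref{4domconv}, to produce an $L^{\mathcal H}$-dominating function for a convergent sequence, so your argument is fully consistent with the paper's toolkit. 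Two small points worth making explicit if you write this up: the nondegeneracy of the norm and the finiteness a.e.\ of $v$ both rest on the fact that a $\Phi$-function necessarily satisfies $\varphi(x,t)\to\infty$ as $t\to\infty$ (a consequence of convexity, $\varphi(x,0)=0$ and $\varphi(x,t)>0$ for $t>0$), not merely on monotonicity; and the implication $\|w\|_\varphi\le 1\Rightarrow\varrho_\varphi(w)\le 1$ that you use for the partial sums is the unit ball property (Proposition \ref{properties}), which itself requires a small Fatou argument in this generality.
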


\begin{proposition}\label{embeddingL}{\rm (cf.\ \cite[Theorem 8.5]{Musielak})} 
Let $\varphi,\,\psi\in \Phi(\Omega)$, with $\varphi\preceq\psi$. Then 
$$
L^\psi(\Omega)\hookrightarrow L^\varphi(\Omega).
$$
\end{proposition}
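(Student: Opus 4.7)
The plan is to prove continuity of the inclusion $L^\psi(\Omega)\hookrightarrow L^\varphi(\Omega)$ by a direct modular computation. I will first show that $L^\psi(\Omega)\subset L^\varphi(\Omega)$, and then upgrade this to the norm inequality $\|u\|_\varphi\le C\|u\|_\psi$ via an elementary scaling argument. Throughout, the only tools are the defining inequality
$$
\varphi(x,t)\le C_1\psi(x,C_2 t)+h(x)\qquad\text{for a.a. } x\in\Omega,\ t\ge 0,
$$
with $h\in L^1(\Omega)$, and the convexity of $\psi(x,\cdot)$ and $\varphi(x,\cdot)$ together with their vanishing at $0$.

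For the inclusion, fix $u\in L^\psi(\Omega)$ and choose $\gamma>0$ such that $\varrho_\psi(u/\gamma)<\infty$. Taking absolute values, substituting $t=|u(x)|/\gamma$ in the defining inequality, and integrating gives
$$
\varrho_\varphi\!\left(\frac{u}{C_2\gamma}\right)\le C_1\,\varrho_\psi\!\left(\frac{u}{\gamma}\right)+\|h\|_{L^1(\Omega)}<\infty,
$$
so that $u\in L^\varphi(\Omega)$ with $1/(C_2\gamma)$ as an admissible scale.

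For the continuity, by homogeneity it suffices to bound $\|u\|_\varphi$ on the set $\{\|u\|_\psi=1\}$. For such $u$ the standard Luxemburg property yields $\varrho_\psi(u)\le 1$, so the computation above with $\gamma=1$ gives
$$
\varrho_\varphi\!\left(\frac{u}{C_2}\right)\le C_1+\|h\|_{L^1(\Omega)}=:K.
$$
If $K\le 1$ we immediately obtain $\|u\|_\varphi\le C_2$. Otherwise I would use the standard consequence of convexity and $\varphi(x,0)=0$, namely $\varrho_\varphi(v/\lambda)\le \varrho_\varphi(v)/\lambda$ for every $\lambda\ge 1$, applied with $v=u/C_2$ and $\lambda=K$, to get $\varrho_\varphi(u/(C_2K))\le 1$, i.e.\ $\|u\|_\varphi\le C_2 K$. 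In either case,
$$
\|u\|_\varphi\le C_2\bigl(1+C_1+\|h\|_{L^1(\Omega)}\bigr)\,\|u\|_\psi,
$$
which proves the continuous embedding.

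The only genuine obstacle is that the additive perturbation $h(x)$ in the definition of $\varphi\preceq\psi$ prevents a one-line rescaling argument that would work if $h\equiv 0$: one cannot simply choose the scale so as to make $\varrho_\varphi(u/\gamma')\le 1$ by shrinking $\gamma'$, because $\|h\|_{L^1(\Omega)}$ may be large. The workaround is exactly the two-step estimate above, in which the inequality is first used at a fixed scale to bound the $\varphi$-modular by the constant $K$, and then the sub-homogeneity of convex modulars, $\varrho_\varphi(v/\lambda)\le \varrho_\varphi(v)/\lambda$ for $\lambda\ge 1$, absorbs $K$ into the final Luxemburg norm.
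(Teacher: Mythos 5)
Your proof is correct. The paper offers no proof of this proposition at all---it is stated with a citation to Musielak's Theorem 8.5---and your argument is precisely the standard one: the pointwise inequality in the definition of $\varphi\preceq\psi$ gives the modular estimate $\varrho_\varphi(u/(C_2\gamma))\le C_1\varrho_\psi(u/\gamma)+\|h\|_{L^1(\Omega)}$, the unit ball property reduces the norm bound to $\varrho_\psi(u)\le 1$, and the sub-homogeneity $\varrho_\varphi(v/\lambda)\le\varrho_\varphi(v)/\lambda$ for $\lambda\ge 1$ (valid by convexity and $\varphi(x,0)=0$) absorbs the constant $K=C_1+\|h\|_{L^1(\Omega)}$ into the Luxemburg norm, yielding the explicit embedding constant $C_2(1+C_1+\|h\|_{L^1(\Omega)})$. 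Your closing remark correctly identifies why the additive perturbation $h$ forces this two-step argument rather than a one-line rescaling.
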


\begin{proposition}[unit ball property]\label{properties}
Let $\varphi\in \Phi(\Omega)$, then the following properties hold.
\begin{itemize}
\item[$(i)$] If $\varphi$ satisfies $(\Delta_2)$, then $L^\varphi(\Omega)=\{u:\Omega\to\mathbb R\mbox{ measurable }:\,\varrho_\varphi(u)<\infty \}$, {\rm(cf.\ Theorem 8.13 of \cite{Musielak})}.
\item[$(ii)$] If $u\in L^\varphi(\Omega)$, then  $\varrho_\varphi(u)<1$ (resp. $=1;\,>1$) $\Leftrightarrow$ $\|u\|_\varphi<1$ (resp.\ $=1;\,>1$), {\rm (cf.\ Lemma 2.1.14 of \cite{Base})}.
\end{itemize}
\end{proposition}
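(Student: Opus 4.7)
My plan is to prove the two items independently. For item $(i)$ the idea is to iterate the $(\Delta_2)$ inequality, while for item $(ii)$ the engine is the convexity of $\varphi(x,\cdot)$ together with a continuity argument for the dilated modular.

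\emph{Proof of $(i)$.} The inclusion $\{u:\varrho_\varphi(u)<\infty\}\subseteq L^\varphi(\Omega)$ is immediate by taking $\gamma=1$ in the definition of $L^\varphi(\Omega)$. For the converse, fix $u\in L^\varphi(\Omega)$ and pick $\gamma_0>0$ with $\varrho_\varphi(\gamma_0 u)<\infty$. If $\gamma_0\geq 1$, monotonicity of $\varphi(x,\cdot)$ in the second argument yields $\varrho_\varphi(u)\leq \varrho_\varphi(\gamma_0 u)<\infty$. Otherwise, select $k\in\N$ with $2^k\gamma_0\geq 1$ and iterate the $(\Delta_2)$ estimate $k$ times to obtain
$$
\varphi(x,2^k t)\leq C^k\varphi(x,t)+\Big(\textstyle\sum_{j=0}^{k-1}C^j\Big)h(x),
$$
for a.e.\ $x\in\Omega$ and every $t\geq 0$. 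Evaluating at $t=\gamma_0|u(x)|$ and applying monotonicity once more controls $\varphi(x,|u|)$ pointwise by a function in $L^1(\Omega)$, so $\varrho_\varphi(u)<\infty$.

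\emph{Proof of $(ii)$.} I would first establish the non-strict implications by a scaling argument. If $\|u\|_\varphi<1$, pick $\gamma\in(\|u\|_\varphi,1)$ with $\varrho_\varphi(u/\gamma)\leq 1$; since $\varphi(x,0)=0$, convexity of $\varphi(x,\cdot)$ gives $\varphi(x,|u|)\leq \gamma\,\varphi(x,|u|/\gamma)$ and integration yields $\varrho_\varphi(u)\leq \gamma<1$. The analogous implications $\|u\|_\varphi=1\Rightarrow \varrho_\varphi(u)\leq 1$ and $\|u\|_\varphi>1\Rightarrow \varrho_\varphi(u)\geq 1$ are obtained in the same fashion by choosing appropriate dilations. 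To reverse these, I would study the auxiliary function $f(\lambda):=\varrho_\varphi(\lambda u)$, which is convex, nondecreasing in $\lambda>0$, and left-continuous at each point where it is finite on a left neighborhood, by the monotone convergence theorem. Then $\varrho_\varphi(u)<1$ forces $f(1)<1$, so by left-continuity there is $\lambda>1$ with $f(\lambda)\leq 1$, whence $\|u\|_\varphi\leq 1/\lambda<1$; a symmetric argument handles the other strict and equality cases.

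\emph{Main obstacle.} The most delicate point is the continuity of $\lambda\mapsto \varrho_\varphi(\lambda u)$ needed to transfer information across the threshold $\lambda=1$ in the strict versions of $(ii)$. In the generality of $\Phi(\Omega)$, without assuming $(\Delta_2)$, only left-continuity is automatic, so one must apply it on the correct side to reach each conclusion. The monotone convergence theorem, combined with the pointwise continuity and monotonicity of $\varphi(x,\cdot)$, provides exactly what is required, and no further hypothesis on $\varphi$ is necessary to close item $(ii)$.
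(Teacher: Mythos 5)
Your proof of $(i)$ is correct: the iteration of the $(\Delta_2)$ inequality is the standard argument and closes that item. The problem is in $(ii)$, specifically in the implication $\varrho_\varphi(u)<1\Rightarrow\|u\|_\varphi<1$ (equivalently, in $\|u\|_\varphi=1\Rightarrow\varrho_\varphi(u)=1$). Writing $f(\lambda):=\varrho_\varphi(\lambda u)$, left-continuity of $f$ at $\lambda=1$ concerns the limit as $\lambda\uparrow 1$ and gives no information whatsoever about $f(\lambda)$ for $\lambda>1$; to pass from $f(1)<1$ to the existence of some $\lambda>1$ with $f(\lambda)\le1$ you need $f$ to be finite (hence, being convex, continuous) on a right neighborhood of $1$, and this can fail for a general $\varphi\in\Phi(\Omega)$. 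Concretely, take $\Omega=(0,1)$, $\varphi(t)=e^{t^2}-1$, and $u$ supported in a small interval $(0,\delta)$ with $e^{u(x)^2}=x^{-1}(\log(1/x))^{-2}$ there: then $\varrho_\varphi(u)<1$ for $\delta$ small, while $\varrho_\varphi(\lambda u)=+\infty$ for every $\lambda>1$, so that $\|u\|_\varphi=1$ although $\varrho_\varphi(u)<1$. Hence your closing claim that ``no further hypothesis on $\varphi$ is necessary to close item $(ii)$'' is wrong, and so is the statement in this generality; indeed Lemma 2.1.14 of \cite{Base} gives the strict and equality cases only for \emph{continuous} semimodulars, the unconditional statement being merely $\|u\|_\varphi\le1\Leftrightarrow\varrho_\varphi(u)\le1$, which your scaling/convexity argument does establish correctly.

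The gap is harmless for the use made of the proposition in this paper: the double phase function $\mathcal H$ satisfies $(\Delta_2)$, so by part $(i)$ the map $\lambda\mapsto\varrho_{\mathcal H}(\lambda u)$ is finite for all $\lambda\ge0$ whenever $u\in L^{\mathcal H}(\Omega)$, and being convex and finite on $[0,\infty)$ it is continuous there; your threshold argument then closes all remaining implications. The correct repair is therefore to add $(\Delta_2)$ (or continuity of the modular) as a hypothesis in $(ii)$ and to invoke two-sided continuity of $f$ at $\lambda=1$, rather than one-sided continuity applied on the wrong side.
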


\begin{remark}
{\rm As a consequence of the homogeneity of the norm and of the unit ball property (Proposition \ref{properties}-$(ii)$) we have the following implications: 
\begin{equation}\label{norm-mod}\|u\|_{\varphi_1}=\|v\|_{\varphi_2}\quad\Rightarrow\quad\left\|\frac{u}{\|v\|_{\varphi_2}}\right\|_{\varphi_1}=1\quad\Rightarrow\quad\varrho_{\varphi_1}\left(\frac{u}{\|v\|_{\varphi_2}}\right)=1=\varrho_{\varphi_2}\left(\frac{v}{\|v\|_{\varphi_2}}\right).\end{equation}
}
\end{remark}

\begin{definition}{\rm 
For $\varphi\in \Phi(\Omega)$, the function $\varphi^*:\Omega\times\mathbb R$ defined as 
$$\varphi^*(x,s)=\sup_{t\ge0}(st-\varphi(x,t))\quad\mbox{for a.a. }x\in\Omega\mbox{ and all }s\in[0,\infty)$$
is called {\it conjugate function of $\varphi$ in the sense of Young}.
}
\end{definition}

\begin{proposition}{\rm(cf.\ \cite[Lemma 2.6.5]{Base})}
Let $\varphi\in \Phi(\Omega)$, then the following H\"older-type inequality holds 
$$\int_\Omega |uv| dx \le 2\|u\|_\varphi\|v\|_{\varphi^*},\,\,\quad\mbox{for all }u\in L^\varphi(\Omega) \mbox{ and }v\in L^{\varphi^*}(\Omega).$$ 
\end{proposition}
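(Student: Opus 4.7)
The plan is to reduce the inequality to Young's inequality for the conjugate pair $(\varphi,\varphi^*)$ together with the unit ball property already at our disposal in Proposition~\ref{properties}. The very definition $\varphi^*(x,s)=\sup_{t\ge0}(st-\varphi(x,t))$ gives, pointwise in $x\in\Omega$, the inequality
$$
st\le \varphi(x,t)+\varphi^*(x,s)\qquad\text{for all }s,t\ge 0,
$$
and this will be the only ingredient beyond the definition of the Luxemburg norm.

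First I would dispose of the degenerate case: if $\|u\|_\varphi=0$ (resp.\ $\|v\|_{\varphi^*}=0$), then $u=0$ a.e.\ (resp.\ $v=0$ a.e.), so both sides vanish and the claim is trivial. Assuming henceforth both norms are strictly positive, I would set
$$
\widetilde u:=\frac{u}{\|u\|_\varphi},\qquad \widetilde v:=\frac{v}{\|v\|_{\varphi^*}},
$$
so that by homogeneity of the Luxemburg norm $\|\widetilde u\|_\varphi=\|\widetilde v\|_{\varphi^*}=1$.

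Then I would apply the pointwise Young inequality with $t=|\widetilde u(x)|$ and $s=|\widetilde v(x)|$ and integrate over $\Omega$, obtaining
$$
\int_\Omega |\widetilde u\,\widetilde v|\,dx \le \int_\Omega \varphi(x,|\widetilde u|)\,dx + \int_\Omega \varphi^*(x,|\widetilde v|)\,dx = \varrho_\varphi(\widetilde u)+\varrho_{\varphi^*}(\widetilde v).
$$
Now the unit ball property (Proposition~\ref{properties}-$(ii)$) applied to $\widetilde u$ and $\widetilde v$, which have unit norm in the respective spaces, yields $\varrho_\varphi(\widetilde u)=\varrho_{\varphi^*}(\widetilde v)=1$. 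Hence the right-hand side is bounded by $2$; multiplying through by $\|u\|_\varphi\|v\|_{\varphi^*}$ gives the desired Hölder-type estimate.

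There is no serious obstacle: the argument is entirely modelled on the classical derivation of Hölder's inequality from Young's inequality, the only conceptual point being that the pairing $\varphi\leftrightarrow\varphi^*$ through the Legendre transform is exactly what makes the pointwise Young inequality available in the Musielak--Orlicz setting. One should only be mindful that the conjugate $\varphi^*$ is itself a generalized $\Phi$-function so that the modular $\varrho_{\varphi^*}$ and the norm $\|\cdot\|_{\varphi^*}$ are well defined, and the unit ball property applies; this is standard and is guaranteed by the construction of $\varphi^*$ from $\varphi\in \Phi(\Omega)$.
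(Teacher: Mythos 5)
The paper does not prove this proposition at all: it is quoted verbatim from the literature (Lemma 2.6.5 of \cite{Base}), so there is no in-paper argument to compare against. Your proof is the standard derivation --- pointwise Young's inequality $st\le\varphi(x,t)+\varphi^*(x,s)$ from the definition of the Legendre conjugate, normalization, integration, and the unit ball property --- and it is essentially the proof given in the cited reference. One refinement is worth making. You invoke the exact-equality case $\|\widetilde u\|_\varphi=1\Rightarrow\varrho_\varphi(\widetilde u)=1$ of Proposition~\ref{properties}-$(ii)$, and the analogous statement for $\varphi^*$; but Proposition~\ref{properties} is stated for $\varphi\in\Phi(\Omega)$, and the conjugate $\varphi^*$ of a general $\Phi$-function need not be a $\Phi$-function in the paper's sense (it may vanish on an interval near the origin, or take the value $+\infty$ when $\varphi$ has linear growth), so the equality form of the unit ball property is not automatically available for $\varrho_{\varphi^*}$. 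All you actually need is the one-sided implication $\|w\|\le 1\Rightarrow\varrho(w)\le 1$, which holds for both modulars by left-continuity (monotone convergence in the scaling parameter, using that $\varphi^*(x,\cdot)$ is convex and lower semicontinuous as a supremum of affine functions); this bounds the right-hand side by $1+1=2$ exactly as in your argument, and is the route taken in \cite{Base}.
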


\begin{definition}{\rm $\varphi:[0,\infty)\to[0,\infty)$ is called {\it $N$-function} ($N$ stands for {\it nice}) if it is a $\Phi$-function satisfying
$$\lim_{t\to0^+}\frac{\varphi(t)}{t}=0\quad\mbox{and}\quad\lim_{t\to\infty}\frac{\varphi(t)}{t}=\infty.$$
A function $\varphi:\Omega\times\mathbb R\to [0,\infty)$ is said to be a {\it generalized $N$-function}, and is denoted by $\varphi\in N(\Omega)$, if $\varphi(\cdot,t)$ is measurable for all $t\in\mathbb R$ and $\varphi(x,\cdot)$ is an $N$-function for a.a. $x\in\Omega$.}
\end{definition}

\noindent{\bf Remark.} $\varphi\in N(\Omega)$ implies $\varphi^*\in N(\Omega)$.

\begin{definition} 
{\rm Let $\phi,\,\psi\in N(\Omega)$. We say that {\it $\phi$ increases essentially more slowly than $\psi$ near infinity}, and we write $\phi\ll\psi$, if for any $k>0$
$$\lim_{t\to\infty}\frac{\phi(x,kt)}{\psi(x,t)}=0\quad\mbox{uniformly for a.a. }x\in\Omega.$$
}
\end{definition}

\medskip
\noindent
For $\varphi\in \Phi(\Omega)$, the related Sobolev space $W^{1,\varphi}(\Omega)$ is the set of all $L^\varphi(\Omega)$-functions $u$ having $|\nabla u|\in L^\varphi(\Omega)$, and is equipped with the norm $$\|u\|_{1,\varphi}=\|u\|_\varphi+\|\nabla u\|_\varphi,$$
where $\|\nabla u\|_\varphi$ stands for $\|\,|\nabla u|\,\|_\varphi$. Furthermore, if $\varphi\in N(\Omega)$ is locally integrable, we denote by $W^{1,\varphi}_0(\Omega)$ the completion of $C^\infty_0(\Omega)$ in $W^{1,\varphi}(\Omega)$.


\begin{proposition}\label{Wreflexive}{\rm  (cf. \cite[Theorem 10.2]{Musielak}, \cite[Proposition 1.8]{Fan12})} Let $\varphi\in N(\Omega)$ be locally integrable and such that
\begin{equation}\label{phi1}\inf_{x\in\Omega}\varphi(x,1)>0.\end{equation}
Then the spaces $W^{1,\varphi}(\Omega)$ and $W^{1,\varphi}_0(\Omega)$ are Banach spaces which are reflexive if $L^\varphi(\Omega)$ is reflexive. 
\end{proposition}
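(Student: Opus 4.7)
The plan is to realize $W^{1,\varphi}(\Omega)$ as an isometric copy of a closed subspace of the product space $L^\varphi(\Omega)^{n+1}$, and then transfer the Banach and reflexivity properties from $L^\varphi$ to $W^{1,\varphi}$ via this embedding. Concretely, I would consider the map $T: W^{1,\varphi}(\Omega)\to L^\varphi(\Omega)^{n+1}$ defined by $T(u)=(u,\partial_1 u,\ldots,\partial_n u)$, and equip the target with the sum norm $(v_0,\ldots,v_n)\mapsto \sum_i \|v_i\|_\varphi$, so that $T$ is an isometry onto its image by the very definition of $\|\cdot\|_{1,\varphi}$. By Proposition~\ref{Banach}, $L^\varphi(\Omega)$ is a Banach space, and so is the finite product $L^\varphi(\Omega)^{n+1}$ with the sum norm.

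The main step is then to show that $T(W^{1,\varphi}(\Omega))$ is closed in $L^\varphi(\Omega)^{n+1}$. Given a sequence $(u_k)\subset W^{1,\varphi}(\Omega)$ with $u_k\to u$ and $\partial_i u_k\to v_i$ in $L^\varphi(\Omega)$, I would argue that $v_i=\partial_i u$ in the sense of distributions; this requires the embedding $L^\varphi(\Omega)\hookrightarrow L^1(\Omega)$, so that the modular convergence in $L^\varphi$ can be transferred to convergence against test functions $\psi\in C^\infty_0(\Omega)$. Here the hypotheses are crucial: because $\varphi(x,\cdot)$ is convex with $\varphi(x,0)=0$, one obtains $\varphi(x,t)\geq t\,\varphi(x,1)$ for $t\geq 1$, and the assumption $\inf_{x\in\Omega}\varphi(x,1)>0$ combined with the boundedness of $\Omega$ yields a continuous embedding $L^\varphi(\Omega)\hookrightarrow L^1(\Omega)$. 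Passing to the limit in the integration-by-parts identity $\int_\Omega \partial_i u_k\,\psi\,dx=-\int_\Omega u_k\,\partial_i\psi\,dx$ identifies $v_i$ with $\partial_i u$, proving closedness. Hence $W^{1,\varphi}(\Omega)$ is a Banach space, and $W^{1,\varphi}_0(\Omega)$ is one as well, being by definition a closed subspace of $W^{1,\varphi}(\Omega)$.

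For reflexivity, I would rely on the standard fact that a finite product of reflexive Banach spaces is reflexive, and that a closed subspace of a reflexive space is reflexive. If $L^\varphi(\Omega)$ is reflexive, then so is $L^\varphi(\Omega)^{n+1}$; the isometric image $T(W^{1,\varphi}(\Omega))$ is closed in it by the previous step and hence reflexive; since $T$ is an isometric isomorphism onto its image, $W^{1,\varphi}(\Omega)$ is reflexive. The space $W^{1,\varphi}_0(\Omega)$, as a closed subspace of the reflexive space $W^{1,\varphi}(\Omega)$, inherits reflexivity.

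The main obstacle, as anticipated above, is verifying the embedding $L^\varphi(\Omega)\hookrightarrow L^1(\Omega)$; without a linear growth bound of $\varphi$ from below, one cannot freely pass to distributional limits. This is exactly where the two technical hypotheses on $\varphi$ (local integrability and $\inf_x\varphi(x,1)>0$) intervene, and the key inequality $\varphi(x,t)\geq t\varphi(x,1)$ for $t\geq 1$, itself a direct consequence of convexity and $\varphi(x,0)=0$, turns these hypotheses into the required control.
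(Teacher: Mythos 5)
The paper does not prove this proposition; it simply cites \cite[Theorem 10.2]{Musielak} and \cite[Proposition 1.8]{Fan12}, so there is no in-paper argument to compare against. Your proof is the standard one used in those references and it is correct: completeness of $L^\varphi(\Omega)$ (Proposition~\ref{Banach}) plus closedness of the image of $u\mapsto(u,\partial_1u,\dots,\partial_nu)$ gives completeness of $W^{1,\varphi}(\Omega)$, and reflexivity passes to closed subspaces of the reflexive product $L^\varphi(\Omega)^{n+1}$; the key embedding $L^\varphi(\Omega)\hookrightarrow L^1(\Omega)$ does follow from $\varphi(x,t)\ge t\,\varphi(x,1)$ for $t\ge1$ (convexity and $\varphi(x,0)=0$) together with \eqref{phi1} and $|\Omega|<\infty$. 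One cosmetic inaccuracy: with the paper's norm $\|u\|_{1,\varphi}=\|u\|_\varphi+\|\,|\nabla u|\,\|_\varphi$ your map $T$ into the sum-normed product is only an isomorphism onto its image (the two norms are equivalent up to a factor $n$), not an isometry; this changes nothing in the conclusion. Note also that local integrability of $\varphi$ is what guarantees $C^\infty_0(\Omega)\subset W^{1,\varphi}(\Omega)$, so that $W^{1,\varphi}_0(\Omega)$ is indeed a closed subspace once completeness is established.
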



\subsection{The double phase $N$-function} The function $\mathcal H:\Omega\times[0,\infty)\to[0,\infty)$ defined as $$\mathcal H(x,t):=t^p+a(x)t^q\quad\mbox{for all }(x,t)\in\Omega\times[0,\infty),$$ with $1<p<q$ and $0\le a(\cdot)\in L^1(\Omega)$, 
is a locally integrable, generalized $N$-function satisfying \eqref{phi1} and
$$\mathcal H(x,2t)\le2^q \mathcal H(x,t)\quad\mbox{for a.a. }x\in\Omega\mbox{ and all }t\in[0,\infty),$$ that is condition $(\Delta_2)$.
Therefore, in correspondence to $\mathcal H$, we define the Musielak-Orlicz space $(L^\mathcal H(\Omega),\|\cdot\|_\mathcal H)$ as
$$
\begin{aligned}L^\mathcal H(\Omega)&:=\big\{u:\Omega\to\mathbb R\mbox{ measurable }:\,\varrho_\mathcal H(u) < \infty \big\},\\
\|u\|_\mathcal H&:=\inf\left\{\gamma>0\,:\,\varrho_\mathcal H(u/\gamma)\le 1\right\},
\end{aligned}
$$
where we recall that 
$$\varrho_\mathcal H(u):=\int_\Omega\mathcal H(x,|u|)dx.$$

\begin{remark}\label{brasco}
{\rm In the next lemma we provide an explicit expression for $\|\cdot\|_\mathcal H$. 
	To this aim, for every function $u$ with $a(x)|u|^q\in L^1(\Omega)$ and $\|a^{1/q}u\|_q>0$,  we set
\[
\Theta_{p,q}(u):=\left(\frac{\|a^{1/q}u\|_q}{\|u\|_p}\right)^\frac{q}{q-p}.
\]
We observe that the convex function
\[
\mathcal W(t)=t^p+t^q,\qquad t\in[0,\infty)
\]
is invertible in $[0,\infty)$. 
We have the following result.
\begin{lemma}\label{brasco1}
Let $1<p< q$. Then, for every  $u$ with $a(x)|u|^q\in L^1(\Omega)$ and $\|a^{1/q}u\|_q>0$, there holds
\begin{equation}
\label{norma}
\|u\|_\mathcal H=\frac{\|u\|_p\,\Theta_{p,q}(u)}{\mathcal W^{-1}(\Theta_{p,q}(u)^p)}.
\end{equation}
\end{lemma}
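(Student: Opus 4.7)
The strategy is to use the unit ball property together with the fact that $\mathcal H$ satisfies $(\Delta_2)$, so that $\|u\|_\mathcal H$ is characterized as the unique $\gamma>0$ making $\varrho_\mathcal H(u/\gamma)=1$. Explicitly, for $u$ with $a(x)|u|^q\in L^1(\Omega)$ we compute
\[
\varrho_\mathcal H(u/\gamma)=\frac{\|u\|_p^p}{\gamma^p}+\frac{\|a^{1/q}u\|_q^q}{\gamma^q},
\]
so proving \eqref{norma} amounts to showing that the unique positive root of
\[
F(\gamma):=\frac{\|u\|_p^p}{\gamma^p}+\frac{\|a^{1/q}u\|_q^q}{\gamma^q}-1=0
\]
is given by the right-hand side of \eqref{norma}. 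Existence and uniqueness of this root are clear since $F$ is strictly decreasing on $(0,\infty)$, tends to $+\infty$ at $0^+$, and to $-1$ at $+\infty$.

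The next step is to rescale so that the equation collapses to an instance of $\mathcal W(t)=\text{const}$, so that the inverse $\mathcal W^{-1}$ appears naturally. Setting $\gamma=\|u\|_p\,\mu$ with $\mu>0$, the equation becomes
\[
\mu^{-p}+C\,\mu^{-q}=1,\qquad C:=\left(\frac{\|a^{1/q}u\|_q}{\|u\|_p}\right)^{q}.
\]
Writing $\mu^{-1}=\alpha t$ with $\alpha:=C^{-1/(q-p)}$ chosen to equate the coefficients, this reduces to
\[
\alpha^p\bigl(t^p+t^q\bigr)=1,\qquad\text{i.e.}\qquad \mathcal W(t)=\alpha^{-p}=C^{p/(q-p)}=\Theta_{p,q}(u)^p.
\]
Since $\mathcal W:[0,\infty)\to[0,\infty)$ is a continuous strictly increasing bijection, inverting yields $t=\mathcal W^{-1}\!\bigl(\Theta_{p,q}(u)^p\bigr)$, and since $\alpha=C^{-1/(q-p)}=\Theta_{p,q}(u)^{-1}$, undoing the substitutions gives
\[
\gamma=\|u\|_p\,\mu=\frac{\|u\|_p}{\alpha t}=\frac{\|u\|_p\,\Theta_{p,q}(u)}{\mathcal W^{-1}\!\bigl(\Theta_{p,q}(u)^p\bigr)},
\]
which is exactly \eqref{norma}. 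The hypothesis $\|a^{1/q}u\|_q>0$ is used precisely to ensure $C>0$ and hence that both $\Theta_{p,q}(u)$ and $\alpha$ are well defined and positive.

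There is essentially no obstacle here beyond bookkeeping exponents: the only content is recognizing that after a scaling $\gamma=\|u\|_p\mu$ followed by a second scaling $\mu^{-1}=\alpha t$ with $\alpha$ balancing the two powers, the two-parameter modular equation becomes a one-variable equation of the form $\mathcal W(t)=\Theta_{p,q}(u)^p$, which can be inverted explicitly. The cleanest way to write the proof is to verify directly that $\gamma$ as in \eqref{norma} satisfies $\varrho_\mathcal H(u/\gamma)=1$ and then invoke the unit ball property (Proposition \ref{properties}(ii)) to conclude $\gamma=\|u\|_\mathcal H$.
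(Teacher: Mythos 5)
Your proof is correct and follows essentially the same route as the paper's: both reduce the modular equation $\varrho_\mathcal H(u/\gamma)=1$ (resp.\ $\le 1$) via a change of variables that balances the two power terms, arriving at $\mathcal W(t)=\Theta_{p,q}(u)^p$ and inverting $\mathcal W$; your composed substitution $\gamma=\|u\|_p\,\Theta_{p,q}(u)/t$ is exactly the paper's choice with $\alpha=-1/(q-p)$. The only cosmetic difference is that you characterize the norm as the unique root of $\varrho_\mathcal H(u/\gamma)=1$ via the unit ball property, while the paper works with the infimum over admissible $\gamma$ and monotonicity of $\mathcal W$.
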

\begin{proof}
Let $\gamma>0$ be admissible for the problem defining $\|u\|_{\mathcal H}$, i.e.
\begin{equation}
\label{condizione}
\frac{1}{\gamma^p}\,\int_\Omega |u|^p\,dx+\frac{1}{\gamma^q}\,\int_\Omega a(x)|u|^q\,dx\le 1.
\end{equation}
We now perform the change of variable
\[
\gamma=\frac{1}{t}\,\left(\int_\Omega |u|^p\,dx\right)^\alpha\,\left(\int_\Omega a(x)|u|^q\,dx\right)^{-\alpha},\qquad t>0,
\]
for some $\alpha\in\mathbb{R}$ that will be chosen later. Then \eqref{condizione} becomes
\[
\begin{split}
t^p\,\left(\int_\Omega |u|^p\,dx\right)^{1-\alpha\,p}&\left(\int_\Omega a(x)|u|^q\,dx\right)^{\alpha\,p}\\
&+t^q\,\left(\int_\Omega |u|^p\,dx\right)^{-\alpha\,q}\,\left(\int_\Omega a(x)|u|^q\,dx\right)^{1+\alpha\,q}\le 1.
\end{split}
\]
If we choose
\[
\alpha:=-\frac{1}{q-p},
\]
the previous inequality becomes
\[
(t^p+t^q)\,\|a^{1/q}u\|_q^{-\frac{pq}{q-p}}\,\|u\|_p^\frac{pq}{q-p}\le 1.
\]
This can be finally rewritten as
\[
\mathcal W(t)\le \Theta_{p,q}(u)^p.
\]
This shows that
\[
\|u\|_\mathcal H=\frac{\displaystyle\left(\int_\Omega a(x)|u|^q\,dx\right)^{\frac{1}{q-p}}}{\displaystyle\left(\int_\Omega |u|^p\,dx\right)^{\frac{1}{q-p}}}\,\inf\left\{\frac1t>0\, :\, \mathcal W(t)\le \Theta_{p,q}(u)^p\right\}.
\]
By using that $\mathcal W$ is strictly monotonically increasing in $[0,\infty)$, we get the expression \eqref{norma}.
\end{proof}
}
\end{remark}

\noindent
We recall here the following definition.
\begin{definition} A function $\varphi\in N(\Omega)$ is {\it uniformly convex} if for every $\varepsilon>0$ there exists $\delta>0$ such that
$$|t-s|\le\varepsilon\max\{t,s\}\quad\mbox{or}\quad\varphi\left(x,\dfrac{t+s}2\right)\le (1-\delta)\frac{\varphi(x,t)+\varphi(x,s)}2$$
for all $t,s\ge0$ and a.a. $x\in\Omega$. 
\end{definition} 
\smallskip

\noindent 
We endow the spaces $W^{1,\mathcal H}(\Omega)$ and $W^{1,\mathcal H}_0(\Omega)$ with the norm 
$$\|u\|_{1,\mathcal H}:=\|u\|_\mathcal H+\|\nabla u\|_\mathcal H.$$

\begin{proposition}\label{reflexivity} The spaces $L^\mathcal H(\Omega)$, $W^{1,\mathcal H}(\Omega)$ and $W^{1,\mathcal H}_0(\Omega)$ are uniformly convex, and so reflexive, Banach spaces. 
\end{proposition}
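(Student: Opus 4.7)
The plan is to establish uniform convexity first at the level of the generalized $N$-function $\mathcal H$, then transfer it to $L^{\mathcal H}(\Omega)$ via a classical theorem of Musielak, and finally lift the result to the Sobolev spaces.

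First I would verify that $\mathcal H(x,t)=t^p+a(x)t^q$ is uniformly convex as a generalized $N$-function with a modulus $\delta(\varepsilon)>0$ independent of $x\in\Omega$. The building blocks $t\mapsto t^p$ and $t\mapsto t^q$ are uniformly convex $N$-functions for $p,q>1$ by standard Clarkson-type inequalities, with moduli $\delta_p(\varepsilon),\delta_q(\varepsilon)>0$. The property is preserved under nonnegative convex combinations: if $\varphi_1,\varphi_2$ are uniformly convex with moduli $\delta_1,\delta_2$ and $\alpha,\beta\ge 0$, then for $|t-s|>\varepsilon\max\{t,s\}$ one simply adds the two midpoint estimates to obtain
\[
(\alpha\varphi_1+\beta\varphi_2)\!\left(\tfrac{t+s}{2}\right)\le \bigl(1-\min\{\delta_1,\delta_2\}\bigr)\,\frac{(\alpha\varphi_1+\beta\varphi_2)(t)+(\alpha\varphi_1+\beta\varphi_2)(s)}{2}.
\]
Choosing $\alpha=1$ and $\beta=a(x)\ge 0$ pointwise, and using that $\delta_p,\delta_q$ do not depend on $x$, yields the claimed uniform convexity of $\mathcal H$ with modulus $\min\{\delta_p,\delta_q\}$.

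Next, since $\mathcal H$ also satisfies the $(\Delta_2)$-condition, I would invoke the classical theorem of Musielak \cite[Theorem 11.6]{Musielak}, according to which a uniformly convex generalized $N$-function verifying $(\Delta_2)$ generates a uniformly convex Musielak-Orlicz space. This yields the uniform convexity of $L^{\mathcal H}(\Omega)$, and reflexivity follows from the Milman-Pettis theorem.

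For the Sobolev spaces, reflexivity of $W^{1,\mathcal H}(\Omega)$ is immediate from Proposition \ref{Wreflexive} combined with the reflexivity of $L^{\mathcal H}(\Omega)$. For uniform convexity I would use the canonical linear map $J:W^{1,\mathcal H}(\Omega)\to L^{\mathcal H}(\Omega)^{n+1}$, $u\mapsto (u,\partial_1 u,\dots,\partial_n u)$, which identifies $W^{1,\mathcal H}(\Omega)$ with a closed subspace of the product space; equipping the latter with an $\ell^r$-sum of norms ($1<r<\infty$) makes it uniformly convex (a standard fact for finite products), and this induces on $W^{1,\mathcal H}(\Omega)$ a norm equivalent to $\|\cdot\|_{1,\mathcal H}$ under which the space is uniformly convex. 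The space $W^{1,\mathcal H}_0(\Omega)$, being by definition a closed subspace of $W^{1,\mathcal H}(\Omega)$, inherits both properties.

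The main obstacle I expect is to secure a modulus of uniform convexity for $\mathcal H$ independent of $x$, which could \emph{a priori} fail on the set $\{a=0\}$ where the $q$-contribution degenerates. The min-of-moduli argument above bypasses this concern: when $a(x)=0$ only the $p$-term is active, but the intrinsic modulus $\delta_p$ of $t^p$ is already sufficient to conclude.
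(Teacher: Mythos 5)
Your argument is correct and follows essentially the same route as the paper: uniform convexity of the generalized $N$-function $\mathcal H$ via the minimum of the moduli $\delta_p,\delta_q$ of the two power functions (which indeed handles the degenerate set $\{a=0\}$), transfer to $L^{\mathcal H}(\Omega)$ by the standard Musielak--Orlicz theorem (the paper cites Theorems 2.4.11 and 2.4.14 of \cite{Base} where you cite Musielak), and Milman--Pettis for reflexivity. Your explicit product-space argument for the Sobolev spaces is a welcome addition, since the paper simply defers to Proposition \ref{Wreflexive} and does not spell out in which (equivalent) norm $W^{1,\mathcal H}(\Omega)$ is uniformly convex.
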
 
\begin{proof} By Propositions \ref{Banach} and \ref{Wreflexive}, $L^\mathcal H(\Omega)$, $W^{1,\mathcal H}(\Omega)$, and $W^{1,\mathcal H}_0(\Omega)$ are complete. For the second part of the thesis it suffices to prove that $L^\mathcal H(\Omega)$ is reflexive. Since by Propostion \ref{Banach}, $L^\mathcal H(\Omega)$ is a Banach space, if we prove that $L^\mathcal H(\Omega)$ is uniformly convex, the reflexivity follows by the Milman-Pettis theorem. 
By Theorems~2.4.11 and 2.4.14 of \cite{Base}, in order to prove that $L^\mathcal H(\Omega)$ is uniformly convex, it is enough to show that the $N$-function $\mathcal H$ is uniformly convex. Let $\varepsilon>0$ and $t,s\ge0$ be such that $|t-s|>\varepsilon\max\{t,s\}$. By Remark 2.4.16 of \cite{Base} there exist $\delta_p(\varepsilon),\delta_q(\varepsilon)>0$ such that
$$\left(\dfrac{t+s}2\right)^p\le(1-\delta_p(\varepsilon))\frac{t^p+s^p}2\quad\mbox{and}\quad\left(\dfrac{t+s}2\right)^q\le(1-\delta_q(\varepsilon))\frac{t^q+s^q}2,$$
thus 
$$\left(\dfrac{t+s}2\right)^p+a(x)\left(\dfrac{t+s}2\right)^q\le(1-\min\{\delta_p(\varepsilon),\delta_q(\varepsilon)\})\frac{t^p+a(x)t^q+s^p+a(x)s^q}2.$$
This concludes the proof.
\end{proof}

\noindent
In the following, the notation $X\hookrightarrow Y$ means that the space $X$ is {\em continuously} embedded into the space $Y$,
while $X\hookrightarrow\hookrightarrow Y$ means that $X$ is {\em compactly} embedded into $Y$.

\begin{proposition}[Embeddings, I]
	\label{embeddingW} Put $p^*:=np/(n-p)$ if $p<n$, $p^*:=+\infty$ otherwise, and 
$$
L^q_a(\Omega):=\left\{u:\Omega\to\mathbb R\,\mbox{{\rm measurable}}\,:\,\int_\Omega a(x)|u|^qdx<\infty\right\},
$$ 
endowed with the norm 
$$
\|u\|_{q,a}:=\left(\int_\Omega a(x)|u|^qdx\right)^{1/q}.
$$ 
Then the following embeddings hold:
\begin{itemize} 
\item[$(i)$] $L^\mathcal H(\Omega)\hookrightarrow L^r(\Omega)$ and $W^{1,\mathcal H}_0(\Omega)\hookrightarrow W^{1,r}_0(\Omega)$ for all $r\in [1, p]$;
\item[$(ii)$] if $p\neq n$, then
$$
\text{$W_0^{1,\mathcal H}(\Omega)\hookrightarrow L^r(\Omega)$,\,\,\, for all $r\in [1, p^*]$;}
$$ 
if $p=n$, then
$$
\text{$W_0^{1,\mathcal H}(\Omega)\hookrightarrow L^r(\Omega)$,\,\,\, for all $r\in [1, \infty)$;} 
$$
\item[$(iii)$] if $p\le n$, then
$$
\text{$W_0^{1,\mathcal H}(\Omega)\hookrightarrow\hookrightarrow L^r(\Omega)$ for all $r\in [1, p^*)$;} 
$$
if $p>n$, then
$$
\text{$W_0^{1,\mathcal H}(\Omega)\hookrightarrow\hookrightarrow L^\infty(\Omega)$;}
$$ 
\item[$(iv)$] $L^\mathcal H(\Omega)\hookrightarrow L^q_a(\Omega)$;
\item[$(v)$] if $a\in L^\infty(\Omega)$, then
$L^q(\Omega)\hookrightarrow L^\mathcal H(\Omega)$. 
\end{itemize}
\end{proposition}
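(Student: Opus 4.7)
The proof falls into routine comparisons between $\mathcal{H}$ and simpler $\Phi$-functions together with the standard Sobolev/Rellich--Kondrachov theorems; the engine throughout is Proposition~\ref{embeddingL} (embedding $L^\psi\hookrightarrow L^\varphi$ whenever $\varphi\preceq\psi$) combined with the unit ball property of Proposition~\ref{properties}. Since $a\ge 0$, the two-sided bound
\[
t^p\le\mathcal H(x,t),\qquad a(x)t^q\le\mathcal H(x,t),\qquad\text{for a.a. }x\in\Omega,\ t\ge 0,
\]
is the only pointwise information I will need for (i)--(iv); (v) instead uses an upper bound on $\mathcal H$.

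For (i), given $r\in[1,p]$, I would check directly that the $\Phi$-function $\varphi(t):=t^r$ satisfies $\varphi\preceq\mathcal H$: for $t\ge 1$, $t^r\le t^p\le\mathcal H(x,t)$, and for $0\le t\le 1$, $t^r\le 1$, so $t^r\le\mathcal H(x,t)+1$, with $1\in L^1(\Omega)$ because $\Omega$ is bounded. Proposition~\ref{embeddingL} then yields $L^\mathcal H(\Omega)\hookrightarrow L^r(\Omega)$, and applying this to both $u$ and $|\nabla u|$ gives $W^{1,\mathcal H}_0(\Omega)\hookrightarrow W^{1,r}_0(\Omega)$. Item (ii) follows from (i) with $r=p$ by composing with the classical Sobolev embedding $W^{1,p}_0(\Omega)\hookrightarrow L^r(\Omega)$ (for $r\in[1,p^*]$ if $p<n$, for $r\in[1,\infty)$ if $p=n$). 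Item (iii) is obtained the same way by appending the Rellich--Kondrachov theorem $W^{1,p}_0(\Omega)\hookrightarrow\hookrightarrow L^r(\Omega)$ for $r<p^*$ (respectively, for $r=\infty$ when $p>n$) after the continuous embedding $W^{1,\mathcal H}_0(\Omega)\hookrightarrow W^{1,p}_0(\Omega)$.

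For (iv) I will give a direct unit-ball argument. If $u\in L^\mathcal H(\Omega)$ with $\|u\|_\mathcal H=1$, Proposition~\ref{properties}-(ii) gives $\varrho_\mathcal H(u)\le 1$, hence
\[
\|u\|_{q,a}^q=\int_\Omega a(x)|u|^q\,dx\le \int_\Omega \mathcal H(x,|u|)\,dx=\varrho_\mathcal H(u)\le 1,
\]
so $\|u\|_{q,a}\le\|u\|_\mathcal H$. Homogeneity of both norms extends this to arbitrary $u\in L^\mathcal H(\Omega)$. For (v), when $a\in L^\infty(\Omega)$, I use that $p\le q$ implies $t^p\le 1+t^q$ for every $t\ge 0$, whence
\[
\mathcal H(x,t)\le (1+\|a\|_\infty)(1+t^q)\qquad\text{for a.a. }x\in\Omega,\ t\ge 0,
\]
which is precisely the statement $\mathcal H\preceq t^q$ (the constant function $1$ is in $L^1(\Omega)$ because $|\Omega|<\infty$). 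Proposition~\ref{embeddingL} then delivers $L^q(\Omega)\hookrightarrow L^\mathcal H(\Omega)$.

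No part is really delicate, but the one point that requires care is (iv), because $a(x)t^q$ is \emph{not} a $\Phi$-function on $\Omega\times[0,\infty)$ in the sense of Section~\ref{recalls} (it vanishes where $a=0$), so Proposition~\ref{embeddingL} does not formally apply and the direct modular computation above is the cleanest route. Everything else is bookkeeping.
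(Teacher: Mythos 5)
Your proposal is correct and follows essentially the same route as the paper: the comparison $t^p\preceq\mathcal H$ plus Proposition~\ref{embeddingL} for (i)--(iii) (the paper reduces $r\in[1,p]$ to $r=p$ via boundedness of $\Omega$ rather than checking $t^r\preceq\mathcal H$ directly, a trivial variation), the direct modular/unit-ball computation for (iv), and the pointwise bound $\mathcal H(x,t)\le 1+(1+\|a\|_\infty)t^q$ for (v). Your closing remark about why (iv) needs the direct argument is a fair observation and consistent with what the paper does.
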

\begin{proof} Put $\mathcal H_p(x,t):=t^p$ for all $t\ge0$ and $x\in\Omega$. Clearly, $\mathcal H_p\preceq\mathcal H$, hence by Proposition~\ref{embeddingL}, $L^{\mathcal H}(\Omega)\hookrightarrow L^{p}(\Omega)$ and  $W_0^{1,\mathcal H}(\Omega)\hookrightarrow W_0^{1,p}(\Omega)$. Therefore, $(i)$ follows by the boundedness of $\Omega$. While, $(ii)$ and $(iii)$ follow by the embedding results on classical Lebesgue and Sobolev spaces.
Now, let $u\in L^\mathcal H(\Omega)$, then 
$$\int_\Omega a(x)|u|^qdx\le\int_\Omega(|u|^p+ a(x)|u|^q)dx=\varrho_\mathcal H(u).$$
Therefore, if $u\neq 0$,
$$
\int_\Omega a(x)\left(\frac{|u|}{\|u\|_\mathcal H}\right)^q dx\le 1,
$$
and so we conclude
$$
\|u\|_{q,a}\le\|u\|_\mathcal H,
$$
which proves $(iv)$.
Finally, for all $t\ge0$ and a.a. $x\in\Omega$, if $a\in L^\infty(\Omega)$, we have
$$\mathcal H(x,t)\le(1+t^q)+a(x)t^q\le1+(1+\|a\|_\infty)t^q,$$
so $(v)$ follows once again by Proposition \ref{embeddingL}.    
\end{proof}

\noindent
Although we will not use it explicitly, the next lemma could be useful in some situations.

\begin{lemma}\label{4domconv} 
	Let $(u_h)$ be a sequence in $L^\mathcal H(\Omega)$ such that $u_h\to u\in L^\mathcal H(\Omega)$. Then, there exist a subsequence $(u_{h_j})$ and a function $v\in L^\mathcal H(\Omega)$ such that 
	$$\begin{gathered}u_{h_j}\to u\quad\mbox{a.e. in }\Omega,\\
	|u_{h_j}|\le v\quad\mbox{for all $j$, a.e. in }\Omega.\end{gathered}$$
\end{lemma}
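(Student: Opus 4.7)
I would mimic the classical Riesz--Fischer argument that proves the analogous statement for $L^p$ spaces, adapting it to the Musielak--Orlicz setting by exploiting the completeness of $L^\mathcal H(\Omega)$ (Proposition \ref{Banach}) together with the embedding $L^\mathcal H(\Omega)\hookrightarrow L^1(\Omega)$ provided by Proposition \ref{embeddingW}$(i)$ and the boundedness of $\Omega$.

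First, from the norm convergence $u_h\to u$ in $L^\mathcal H(\Omega)$, I would extract a subsequence $(u_{h_j})$ with the fast decay
$$\|u_{h_j}-u\|_\mathcal H\le 2^{-j}\quad\text{for every }j\ge 1.$$
Setting $w_j:=|u_{h_j}-u|\ge 0$ and looking at the monotone partial sums $S_N:=\sum_{j=1}^N w_j$, the triangle inequality in $L^\mathcal H(\Omega)$ gives, for every $N>M$,
$$\|S_N-S_M\|_\mathcal H\le\sum_{j=M+1}^{N}\|w_j\|_\mathcal H\le 2^{-M},$$
so $(S_N)$ is a Cauchy sequence in the Banach space $L^\mathcal H(\Omega)$; denote by $V\in L^\mathcal H(\Omega)$ its norm limit.

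Next I would identify this norm limit with the pointwise limit of the series. By Proposition \ref{embeddingW}$(i)$ (with $r=1$), $S_N\to V$ in $L^1(\Omega)$, whence a subsequence converges a.e.\ to $V$; but $(S_N)$ is pointwise monotone non-decreasing, so the entire sequence $S_N\uparrow V$ a.e., in particular $V(x)<\infty$ for a.a.\ $x\in\Omega$. The pointwise convergence of the series $\sum_j w_j=V$ a.e.\ forces $w_j\to 0$ a.e., i.e.
$$u_{h_j}\to u\quad\text{a.e. in }\Omega.$$

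Finally, I would set $v:=|u|+V\in L^\mathcal H(\Omega)$ (the sum of two elements of the Banach space $L^\mathcal H(\Omega)$); for every $j$ and a.a.\ $x\in\Omega$
$$|u_{h_j}(x)|\le |u(x)|+w_j(x)\le|u(x)|+V(x)=v(x),$$
which is the required domination. The only non-routine point is guaranteeing $V<\infty$ a.e.; I expect the embedding into $L^1(\Omega)$ to settle it cleanly, but one could equivalently combine the $(\Delta_2)$-condition for $\mathcal H$ (which, by Proposition \ref{properties}$(i)$, identifies $L^\mathcal H(\Omega)$ with the sublevel set of $\varrho_\mathcal H$) with Fatou's lemma applied to the modular $\varrho_\mathcal H(S_N/C)$ for a suitable constant $C>0$.
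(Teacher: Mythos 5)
Your argument is correct, and it is the same Riesz--Fischer strategy the paper uses, but executed with a slightly different decomposition and different supporting facts. You telescope against the known limit, summing $w_j=|u_{h_j}-u|$, so the a.e.\ convergence $u_{h_j}\to u$ falls out for free from the convergence of the series $\sum_j w_j$ and the domination $|u_{h_j}|\le |u|+w_j\le |u|+V$ is immediate; the paper instead sums the consecutive differences $|u_{h_{j+1}}-u_{h_j}|$, which forces it to first show that $(u_{h_j}(x))$ is pointwise Cauchy with limit some $\bar u$, and then to identify $\bar u$ with $u$ via a norm-convergence lemma. You also produce the dominating function $V$ as the norm limit of the partial sums $S_N$ in the Banach space $L^{\mathcal H}(\Omega)$ (Proposition \ref{Banach}) and identify it with the pointwise supremum through the embedding $L^{\mathcal H}(\Omega)\hookrightarrow L^1(\Omega)$ of Proposition \ref{embeddingW}$(i)$ plus monotonicity, whereas the paper gets membership of the pointwise limit in $L^{\mathcal H}(\Omega)$ from $\varrho_{\mathcal H}(f_m)\le 1$ together with a monotone-continuity property of the modular (Lemma 2.3.16 of the cited reference) and the $(\Delta_2)$-condition. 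Both routes are sound; yours is marginally more self-contained since it leans only on completeness and the $L^1$ embedding rather than on external modular-convergence lemmas, while the paper's version has the mild advantage of not needing the triangle inequality applied to infinitely many terms at once (it bounds $\|f_m\|_{\mathcal H}$ by a finite sum and passes to the limit in the modular). One small point worth making explicit in your write-up: the inequality $w_j\le V$ a.e.\ used in the final domination follows because $w_j\le S_N$ for $N\ge j$ and $S_N\uparrow V$ a.e., which you have established but do not state.
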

\begin{proof} By Proposition \ref{embeddingW} $L^\mathcal H(\Omega)\hookrightarrow L^p(\Omega)$, hence up to a subsequence $u_h\to u$ a.e. in $\Omega$. We prove now the second part of the statement. Since $(u_h)$ is a Cauchy sequence in $L^\mathcal H(\Omega)$, we can find a subsequence $(u_{h_j})$ for which $\|u_{h_{j+1}}-u_{h_j}\|_\mathcal H\le 2^{-j}$. For all $m\ge 1$ we define 
	$$f_m(x):=\sum_{j=1}^m|u_{h_{j+1}}(x)-u_{h_j}(x)|$$
	which satisfies $$\|f_m\|_\mathcal H\le\sum_{j=1}^m\|u_{h_{j+1}}-u_{h_j}\|_\mathcal H\le 1\quad\mbox{for all }m\in\mathbb N$$
	and by the unit ball property $$\varrho_\mathcal H(f_m)\le 1\quad\mbox{for all }m\in\mathbb N.$$ 
	Clearly, $f_{m}(x)\le f_{m+1}(x)$ for all $m$, a.e. in $\Omega$, and so, by the monotone convergence theorem, there exists $f\in L^1(\Omega)$ such that $f_m\to f$ a.e. in $\Omega$. By virtue of Lemma 2.3.16-$(b)$ of \cite{Base}, $$\varrho_\mathcal H(f)=\lim_{m\to\infty}\varrho_\mathcal H(f_m),$$
	whence $f\in L^\mathcal H(\Omega)$. Now, for all $m>\ell\ge2$ and for a.a. $x\in\Omega$
	\begin{equation}\label{dis}|u_{h_m}(x)-u_{h_\ell}(x)|\le|u_{h_m}(x)-u_{h_{m-1}}(x)|+\dots+|u_{h_{\ell+1}}(x)-u_{h_\ell}(x)|=f_{m-1}(x)-f_{\ell-1}(x).\end{equation}This implies that for a.a. $x\in\Omega$, $(u_{h_j}(x))$ is a Cauchy sequence in $\mathbb R$ and so it converges to some $\bar u(x)\in\mathbb R$. Furthermore,  
	passing to the limit for $m\to\infty$ in \eqref{dis}, we get for all $j\ge 2$ and for a.a. $x\in\Omega$
	\begin{equation}\label{almostfin}|\bar u(x)-u_{h_j}(x)|\le f(x).
	\end{equation}
	Thus, by Lemma 2.3.16 $(c)$ of \cite{Base}, we obtain  
	$u_{h_j}\to\bar u$ in $L^\mathcal H(\Omega),$
	and so $u=\bar u$ a.e. in $\Omega$. Finally, \eqref{almostfin} yields
	$$|u_{h_j}(x)|\le |u(x)|+f(x)\quad\mbox{for a.a. }x\in\Omega,$$
	and the proof is concluded by taking $v:=|u|+f\in L^\mathcal H(\Omega)$.
\end{proof}


\noindent From now on in the paper, unless explicitly stated, we shall assume that
$$
1<p<q<n.
$$ 

\begin{definition}\label{criticalH} 
{\rm For all $x\in\Omega$ denote by $\mathcal H^{-1}(x,\cdot):[0,\infty)\to[0,\infty)$ the inverse function of $\mathcal H(x,\cdot)$ and define $\mathcal H^{-1}_*:\Omega\times[0,\infty)\to[0,\infty)$ by 
$$\mathcal H^{-1}_*(x,s)=\int_0^s\frac{\mathcal H^{-1}(x,\tau)}{\tau^{(n+1)/n}}d\tau\quad\mbox{for all }(x,s)\in\Omega\times[0,\infty).$$
The function $\mathcal H_*:(x,t)\in \Omega\times[0,\infty)\mapsto s\in[0,\infty)$ such that $\mathcal H^{-1}_*(x,s)=t$ is called {\it Sobolev conjugate function of} $\mathcal H$.}
\end{definition}

\begin{proposition}[Embeddings, II]\label{sob-poinc} Assume that \eqref{cond-pq} holds.
Then the following facts hold.
\begin{itemize}
\item[$(i)$] $W^{1,\mathcal H}(\Omega)\hookrightarrow L^{\mathcal H_*}(\Omega)$.
\item[$(ii)$] If $\mathcal K\in N(\Omega)$, $\mathcal K:\Omega\times[0,\infty)\to[0,\infty)$ is continuous and such that $\mathcal K\ll\mathcal H_*$, then 
$$
\text{$W^{1,\mathcal H}(\Omega)\hookrightarrow\hookrightarrow L^\mathcal K(\Omega)$.}
$$
\item[$(iii)$] $\mathcal H\ll\mathcal H_*$, and consequently 
$$
\text{$W^{1,\mathcal H}(\Omega)\hookrightarrow\hookrightarrow L^\mathcal H(\Omega)$.}
$$
\item[$(iv)$] The following Poincar\'e-type inequality holds
\begin{equation}\label{poinc}\|u\|_\mathcal H\le C \|\nabla u\|_\mathcal H,\,
\quad\mbox{for all }u\in W^{1,\mathcal H}_0(\Omega),\end{equation}
for some constant $C>0$ independent of $u$.
\end{itemize} 
\end{proposition}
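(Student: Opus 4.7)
The plan is to reduce (i)--(iv) to general Musielak--Orlicz--Sobolev embedding results, once we verify that the double-phase $N$-function $\mathcal H$ satisfies the structural hypotheses of \cite{FanImbedding, HHK}. We already know from earlier in Section~\ref{recalls} that $\mathcal H(x,t)=t^p+a(x)t^q$ is a locally integrable generalized $N$-function satisfying $(\Delta_2)$ and $\inf_x \mathcal H(x,1)>0$; the Lipschitz regularity of $a$ together with the balance condition $q/p<1+1/n$ from \eqref{cond-pq} supplies the quantitative continuity in $x$ demanded by those theorems and, in particular, secures the density of smooth functions in $W^{1,\mathcal H}_0(\Omega)$.

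For (i), I would invoke the Fan imbedding theorem of \cite{FanImbedding}, which under the above hypotheses yields directly the continuous critical-exponent embedding $W^{1,\mathcal H}(\Omega)\hookrightarrow L^{\mathcal H_*}(\Omega)$. The compactness statement in (ii) then follows from the Musielak--Orlicz Rellich--Kondrachov-type theorem of \cite{FanImbedding, HHK}: the condition $\mathcal K\ll\mathcal H_*$ provides equi-integrability of $\mathcal K(\cdot,u_h)$ on bounded subsets of $L^{\mathcal H_*}(\Omega)$, which combined with (i) upgrades the embedding $W^{1,\mathcal H}(\Omega)\hookrightarrow L^{\mathcal H_*}(\Omega)$ to a compact embedding into $L^\mathcal K(\Omega)$.

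For (iii), the compact embedding $W^{1,\mathcal H}(\Omega)\hookrightarrow\hookrightarrow L^\mathcal H(\Omega)$ reduces via (ii) to checking $\mathcal H\ll\mathcal H_*$. Since $\mathcal H(x,t)\ge t^p$ we have $\mathcal H^{-1}(x,\tau)\le\tau^{1/p}$, hence
$$
\mathcal H^{-1}_*(x,s)=\int_0^s\mathcal H^{-1}(x,\tau)\tau^{-(n+1)/n}d\tau\le\int_0^s\tau^{1/p-(n+1)/n}d\tau=\frac{np}{n-p}\,s^{(n-p)/(np)},
$$
which upon inversion yields $\mathcal H_*(x,t)\ge C\, t^{p^*}$ uniformly in $x$. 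On the other hand $\mathcal H(x,kt)\le k^p t^p+\|a\|_\infty k^q t^q$. The hypothesis $q/p<1+1/n$ in \eqref{cond-pq} forces $q<p(n+1)/n<p^*$ (the second inequality being equivalent to $p>n/(n+1)$, which is trivial since $p>1$); so $\mathcal H(x,kt)/\mathcal H_*(x,t)\to 0$ as $t\to\infty$ uniformly in $x$, which is exactly $\mathcal H\ll\mathcal H_*$.

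For (iv), the Poincar\'e inequality can be obtained either by direct appeal to the Poincar\'e-type theorem of \cite{FanImbedding, HHK}, or by a standard compactness-and-contradiction argument exploiting (iii): if no such constant $C$ existed, there would be a sequence $(u_h)\subset W^{1,\mathcal H}_0(\Omega)$ with $\|u_h\|_\mathcal H=1$ and $\|\nabla u_h\|_\mathcal H\to 0$; by reflexivity (Proposition~\ref{reflexivity}) together with the compact embedding in (iii), a subsequence would converge strongly in $L^\mathcal H(\Omega)$ to some limit $u\in W^{1,\mathcal H}_0(\Omega)$ with $\nabla u=0$, whence $u\equiv 0$, contradicting $\|u\|_\mathcal H=1$. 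The main obstacle I anticipate is carrying out (i) cleanly, namely checking that the particular double-phase $\mathcal H$ with the Lipschitz assumption on $a$ and the balance $q/p<1+1/n$ fits the exact hypotheses of the general theorems in \cite{FanImbedding, HHK}; once that is in place, items (ii)--(iv) follow by the essentially self-contained arguments sketched above.
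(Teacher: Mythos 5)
Your overall strategy coincides with the paper's: both reduce (i)--(iv) to the general Musielak--Orlicz--Sobolev embedding theorems of \cite{FanImbedding} (Theorems 1.1 and 1.2 there), and your supplementary computations are sound --- in particular your explicit verification of $\mathcal H\ll\mathcal H_*$ via $\mathcal H^{-1}(x,\tau)\le\tau^{1/p}$, hence $\mathcal H_*(x,t)\ge Ct^{p^*}$ with $q<p^*$, and your compactness-and-contradiction derivation of (iv) from (iii) are both correct and even more detailed than what the paper records.

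However, the one step you explicitly defer --- ``checking that the particular double-phase $\mathcal H$ \dots fits the exact hypotheses of the general theorems'' --- is not a routine formality: it is the \emph{entire content} of the paper's proof, and your phrase ``supplies the quantitative continuity in $x$ demanded by those theorems'' leaves it unverified. Concretely, what must be checked is condition (2) of Proposition 3.1 of \cite{FanImbedding}: there exist $\delta<1/n$, $c_0>0$ and $t_0>0$ such that
$$
\left|\frac{\partial\mathcal H(x,t)}{\partial x_j}\right|\le c_0\bigl(\mathcal H(x,t)\bigr)^{1+\delta}
\quad\text{for }j=1,\dots,n,\ \text{a.a. }x\in\Omega,\ t\ge t_0 .
$$
This is where \emph{both} hypotheses of \eqref{cond-pq} enter in a quantitatively matched way: since $a$ is Lipschitz with constant $c_a$, one has $|\partial_{x_j}\mathcal H(x,t)|=|\partial_{x_j}a(x)|\,t^q\le c_a t^q\le c_a\bigl(t^p+a(x)t^q\bigr)^{q/p}=c_a\,\mathcal H(x,t)^{q/p}$, so the condition holds with $\delta:=q/p-1$, which is $<1/n$ precisely because $q/p<1+1/n$. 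Without this verification your appeal to \cite{FanImbedding} is not justified, so you should insert this two-line computation to close the argument; once it is in place, the rest of your write-up stands.
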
 
\begin{proof} We refer to Theorems 1.1 and 1.2 of \cite{FanImbedding}. It suffices to prove condition $(2)$ of Proposition~3.1 of \cite{FanImbedding}, i.e. that there exist three positive constants $\delta<1/n$, $c_0$ and $t_0$ such that 
\begin{equation}\label{p5}\left|\frac{\partial \mathcal H(x,t)}{\partial x_j}\right|\le c_0(\mathcal H(x,t))^{1+\delta}\end{equation}
for all $j=1,\dots,n$, $x\in\Omega$ for which $\nabla a(x)$ exists, and $t\ge t_0$.
If we put $\delta:=q/p-1$ and $c_a>0$ the Lipschitz constant of $a$, we get 
$$\left|\frac{\partial \mathcal H(x,t)}{\partial x_j}\right|\le c_at^q\le c_a (t^p+a(x)t^q)^{q/p}\quad\mbox{for a.a. } x\in\Omega\mbox{, all }t>0\mbox{, and }j=1,\dots,n,$$
that is \eqref{p5} with $c_0:=c_a$, $\delta:=q/p-1<1/n$ and any $t_0>0$.
\end{proof}

\begin{remark}\label{remh'}{\rm Poincar\'e-type inequality \eqref{poinc} has been proved also in \cite{HHK} under the more general assumption  
\begin{equation}\label{cond-reg}\Omega \mbox{ is quasi-convex and }a\in C^{0,\alpha}(\overline{\Omega})\mbox{, with }\dfrac{q}p\le 1+\dfrac{\alpha}n\quad\text{for some $\alpha\in (0,1]$.}\end{equation}
We wish to stress that the bound on $q/p$ given in \eqref{cond-reg}, was required for the first time in the papers \cite{BarColMin1,BarColMin2} dealing with regularity of local minimizers for double phase variational integrals.
Furthermore, we observe that, since $p^*>p(1+1/n)$, 
both \eqref{cond-pq} and \eqref{cond-reg} imply $q<p^*$.}
\end{remark}

\noindent
As a consequence of inequality \eqref{poinc}, if either assumption \eqref{cond-pq} or assumption \eqref{cond-reg} holds, we equip the space $W^{1,\mathcal H}_0(\Omega)$ with the equivalent norm
$$\|\nabla u\|_\mathcal H.$$

\section{The eigenvalue problem}\label{sec3}
\subsection{Derivation of the Euler-Lagrange equation}
Put
$$
K(u):=\|\nabla u\|_\mathcal H,\quad
k(u):=\|u\|_\mathcal H\,\,\, \text{for $u\in W^{1,\mathcal H}_0(\Omega)$},\quad\,\,
\mathcal M:=\big\{u\in W^{1,\mathcal H}_0(\Omega)\,:\, k(u)=1\big\}.
$$
Let us consider the Rayleigh ratio 
\begin{equation}\label{R}\frac{K(u)}{k(u)}=\frac{\|\nabla u\|_\mathcal H}{\|u\|_\mathcal H}\end{equation} 
and define the {\it first eigenvalue} as
\begin{equation}
\label{Rr}
\lambda_\mathcal H^{1}:=\inf_{u\in W^{1,\mathcal H}_0(\Omega)\setminus\{0\}}\frac{\|\nabla u\|_\mathcal H}{\|u\|_\mathcal H}=\inf_{u\in\mathcal M}K(u).\end{equation}
{\it We claim that the following equation 
\begin{equation}
	 \label{EL}
	 \begin{aligned}-\mathrm{div}&\left(\left[p\left(\frac{|\nabla u|}{K(u)}\right)^{p-2}+qa(x)\left(\frac{|\nabla u|}{K(u)}\right)^{q-2}\right]\frac{\nabla u}{K(u)}\right)\\
	 &\hspace{1.5cm}=\lambda S(u) \left[p\left(\frac{|u|}{k(u)}\right)^{p-2}+qa(x)\left(\frac{|u|}{k(u)}\right)^{q-2}\right]\frac u{k(u)}, \quad\; u\in W^{1,\mathcal H}_0(\Omega)\setminus\{0\}
	 \end{aligned}
	 \end{equation}
	  is the Euler-Lagrange equation corresponding to the minimization of the Rayleigh ratio \eqref{R}.} In \eqref{EL} we have denoted by $S(u)$ the following quantity
\begin{equation}\label{Su}S(u):=\frac{\displaystyle\int_\Omega\left[p\left(\frac{|\nabla u|}{K(u)}\right)^p+qa(x)\left(\frac{|\nabla u|}{K(u)}\right)^q\right]dx}{\displaystyle\int_\Omega\left[p\left(\frac{|u|}{k(u)}\right)^p+qa(x)\left(\frac{|u|}{k(u)}\right)^q\right]dx}.\end{equation}
We note that equation \eqref{EL} reduces to \eqref{ELM} if $u\in\mathcal M$, since in that case $K(u)=\lambda$, $k(u)=1$ and $S(u)$ reads as in \eqref{SuM}. 

\noindent
In order to prove the claim, we define for all $u\in W^{1,\mathcal H}_0(\Omega)\setminus\{0\}$ and $v\in W^{1,\mathcal H}_0(\Omega)$:
\begin{align*} 
\langle A(u),v\rangle:&=\frac{\displaystyle\int_\Omega\left[p\left(\frac{|u|}{k(u)}\right)^{p-2}+qa(x)\left(\frac{|u|}{k(u)}\right)^{q-2}\right]\frac{u}{k(u)}v\,dx}{\displaystyle\int_\Omega\left[p\left(\frac{|u|}{k(u)}\right)^p+qa(x)\left(\frac{|u|}{k(u)}\right)^q\right]dx},\\
\langle B(u),v\rangle:&=\frac{\displaystyle\int_\Omega\left[p\left(\frac{|\nabla u|}{K(u)}\right)^{p-2}+qa(x)\left(\frac{|\nabla u|}{K(u)}\right)^{q-2}\right]\frac{\nabla u}{K(u)}\cdot \nabla v\,dx}{\displaystyle\int_\Omega\left[p\left(\frac{|\nabla u|}{K(u)}\right)^p+qa(x)\left(\frac{|\nabla u|}{K(u)}\right)^q\right]dx}.
\end{align*}

\begin{proposition}\label{kC1} 
$k\in C^1(W^{1,\mathcal H}_0(\Omega)\setminus\{0\})$ with $k'(u)=A(u)$ for all $u\in W^{1,\mathcal H}_0(\Omega)\setminus\{0\}$.
\end{proposition}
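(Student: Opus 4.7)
The plan is to apply the implicit function theorem to the auxiliary map
$$
F:(W^{1,\mathcal H}_0(\Omega)\setminus\{0\})\times(0,\infty)\to\R,\qquad F(u,\gamma):=\varrho_\mathcal H(u/\gamma)-1,
$$
exploiting that, by the unit ball property in Proposition~\ref{properties}, $k(u)$ is the unique $\gamma>0$ solving $F(u,\gamma)=0$ whenever $u\neq 0$. First I would verify that $F$ is of class $C^1$ on its domain: the integrand $|u|^p/\gamma^p+a(x)|u|^q/\gamma^q$ is pointwise smooth in $(u,\gamma)$ for $\gamma>0$, with partial derivatives bounded on bounded sets of $(u,\gamma)$ by expressions of the form $C(1+|u|^{p-1}+a(x)|u|^{q-1})$. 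Differentiation under the integral sign is then justified by dominated convergence combined with the continuous embeddings $W^{1,\mathcal H}_0(\Omega)\hookrightarrow L^p(\Omega)$ and $L^\mathcal H(\Omega)\hookrightarrow L^q_a(\Omega)$ supplied by Proposition~\ref{embeddingW}; continuity of the resulting partial derivatives as functions of $(u,\gamma)$ reduces to the classical continuity of the Nemytskii operators $u\mapsto |u|^{p-2}u$ on $L^p(\Omega)$ and $u\mapsto |u|^{q-2}u$ on $L^q_a(\Omega)$.

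A direct computation under the integral sign then yields
$$
\partial_\gamma F(u,\gamma)=-\frac{1}{\gamma}\int_\Omega\left[p\left(\frac{|u|}{\gamma}\right)^p+qa(x)\left(\frac{|u|}{\gamma}\right)^q\right]dx,
$$
$$
\langle \partial_u F(u,\gamma),v\rangle=\frac{1}{\gamma}\int_\Omega\left[p\left(\frac{|u|}{\gamma}\right)^{p-2}\frac{u}{\gamma}+qa(x)\left(\frac{|u|}{\gamma}\right)^{q-2}\frac{u}{\gamma}\right]v\,dx.
$$
Since $\partial_\gamma F(u,k(u))<0$ (the integrand is pointwise nonnegative and strictly positive on a set of positive measure when $u\neq 0$), the implicit function theorem furnishes a local $C^1$ solution $\gamma=\gamma(u)$ of $F(u,\gamma(u))=0$ near each $u\neq 0$, which by uniqueness coincides with $k$; hence $k\in C^1(W^{1,\mathcal H}_0(\Omega)\setminus\{0\})$. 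Differentiating the identity $F(u,k(u))=0$ gives
$$
\langle k'(u),v\rangle=-\frac{\langle \partial_u F(u,k(u)),v\rangle}{\partial_\gamma F(u,k(u))},
$$
and the common factor $1/k(u)$ in numerator and denominator cancels, reproducing exactly the expression defining $\langle A(u),v\rangle$.

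The main obstacle I expect is the continuity of $u\mapsto\partial_u F(u,\gamma)$ as a map into the topological dual of $W^{1,\mathcal H}_0(\Omega)$: the $q$-term $qa(x)(|u|/\gamma)^{q-2}u/\gamma$ is controlled only through the weighted integrability provided by $a$. One has to split the test-function pairing into a pure $p$-part and a weighted $q$-part, apply H\"older's inequality in each, and absorb the resulting factors into the $W^{1,\mathcal H}_0$-norm by means of Proposition~\ref{embeddingW}. Once this bookkeeping is carried out, the rest of the argument is a routine implicit-function calculation.
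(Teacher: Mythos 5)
Your proposal is correct, but it follows a genuinely different route from the paper. The paper computes the G\^ateaux derivative of $k$ directly (invoking Lemma A.1 of \cite{Franzi} for the difference quotient), checks that $A(u)$ is a bounded functional via H\"older's inequality and Proposition~\ref{embeddingW}, and then proves continuity of $u\mapsto k'(u)$ by an explicit estimate: the dual pairing is split into a $p$-part and a weighted $q$-part, each handled by H\"older's inequality, a dominated-convergence argument with a majorant $w\in L^q(\Omega)$ controlling $a^{1/q}|u_{h_j}|$, and the convergence $f(u_h)\to f(u)$ of the normalizing denominators. You instead characterize $k(u)$ as the unique root in $\gamma$ of $\varrho_\mathcal H(u/\gamma)=1$ (unit ball property plus strict monotonicity of $\gamma\mapsto\varrho_\mathcal H(u/\gamma)$ for $u\neq 0$) and apply the implicit function theorem; your computation of $\partial_\gamma F$ and $\partial_u F$ is right, the nondegeneracy $\partial_\gamma F(u,k(u))<0$ is guaranteed by \eqref{den>1}, and the cancellation of the $1/k(u)$ factors does reproduce $A(u)$ exactly. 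What the implicit function theorem buys you is that differentiability, the formula $k'=A$, and the continuity of $k'$ arrive in one package, with the homogeneity structure of the Luxemburg norm made transparent. What it does not buy you is a shortcut around the analytic core: the $C^1$ regularity of the modular $(u,\gamma)\mapsto\varrho_\mathcal H(u/\gamma)$ requires precisely the continuity of the Nemytskii maps $u\mapsto |u|^{p-2}u$ in $L^p(\Omega)$ and $w\mapsto |w|^{q-2}w$ in $L^q(\Omega)$ applied to $w=a^{1/q}u$ (note $a(x)|u|^{q-2}u\,v=|a^{1/q}u|^{q-2}(a^{1/q}u)(a^{1/q}v)$, so the weighted case reduces cleanly to the unweighted one), which is the same work the paper does by hand for $k'$ itself. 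So the two proofs have identical technical content packaged differently; yours is tidier to state, provided the $C^1$ regularity of the modular --- the step you correctly single out as the main obstacle --- is written out in full.
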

\begin{proof} Reasoning as in Lemma A.1 of \cite{Franzi}, we get 
$$\lim_{\varepsilon\to 0^+}\frac{k(u+\varepsilon v)-k(u)}{\varepsilon}=\langle A(u),v\rangle.$$
Put 
$$f(v):=\int_\Omega\left[p\left(\frac{|v|}{k(v)}\right)^p+qa(x)\left(\frac{|v|}{k(v)}\right)^q\right]dx\quad\mbox{for all }v\in W^{1,\mathcal H}_0(\Omega)\setminus\{0\}.$$
We observe that, being $1<p<q$ and by the unit ball property,
\begin{equation}\label{den>1}f(v)\ge \varrho_\mathcal H\left(\frac{v}{k(v)}\right)=1.\end{equation}
By H\"older's inequality and by Proposition \ref{embeddingW}, it results that 
$$\begin{aligned}|\langle A(u),v\rangle|&\le\frac{p}{k(u)^{p-1}}\int_\Omega|u|^{p-1}|v|dx+\frac{q}{k(u)^{q-1}}\int_\Omega a^{1/q'}|u|^{q-1}a^{1/q}|v|dx\\
&\le\frac{p}{k(u)^{p-1}}\|u\|_p^{p-1}\|v\|_p+\frac{q}{k(u)^{q-1}}\|u\|_{q,a}^{q-1}\|v\|_{q,a}\\
&\le\left(\frac{\mathcal S_p p}{k(u)^{p-1}}\|u\|_p^{p-1}+\frac{\mathcal S_{q,a} q}{k(u)^{q-1}}\|u\|_{q,a}^{q-1}\right)\|v\|_{1,\mathcal H},\end{aligned}
$$
where $\mathcal S_p,\,\mathcal S_{q,a}>0$ are the Sobolev constants for the embeddings of $W^{1,\mathcal H}_0(\Omega)$ in $L^p(\Omega)$ and in $L^q_a(\Omega)$, respectively. Therefore, $A(u)$ belongs to the dual space $(W^{1,\mathcal H}_0(\Omega))'$ of $W^{1,\mathcal H}_0(\Omega)$ and $k$ is G\^ateaux differentiable in $u$, with $k'(u)=A(u)$ for all $u\in W^{1,\mathcal H}_0(\Omega)\setminus\{0\}$.
It remains to prove that $k':W^{1,\mathcal H}_0(\Omega)\to(W^{1,\mathcal H}_0(\Omega))'$ is continuous, i.e. that given a sequence $(u_h)\subset W^{1,\mathcal H}_0(\Omega)\setminus\{0\}$ such that $u_h\to u\neq 0$ in $W^{1,\mathcal H}_0(\Omega)$,
$$\sup_{\underset{\|v\|_{1,\mathcal H}\le1}{v\in W^{1,\mathcal H}_0(\Omega)}}|\langle k'(u_h)-k'(u),v\rangle |\to0 \quad\mbox{as }h\to\infty.$$ 
For all $v\in W^{1,\mathcal H}_0(\Omega)$ with $\|v\|_{1,\mathcal H}\le1$ we get 
$$\begin{aligned}|\langle k'(u_h)-k'(u),v\rangle |&\le\left|\frac{\displaystyle\int_\Omega p\left(\frac{|u_h|}{k(u_h)}\right)^{p-2}\frac{u_h}{k(u_h)}v\,dx}{f(u_h)}-\frac{\displaystyle\int_\Omega p\left(\frac{|u|}{k(u)}\right)^{p-2}\frac{u}{k(u)}v\,dx}{f(u)}\right|\\
&\phantom{=}+\left|\frac{\displaystyle\int_\Omega q a(x)\left(\frac{|u_h|}{k(u_h)}\right)^{q-2}\frac{u_h}{k(u_h)}v\,dx}{f(u_h)}-\frac{\displaystyle\int_\Omega qa(x)\left(\frac{|u|}{k(u)}\right)^{q-2}\frac{u}{k(u)}v\,dx}{f(u)}\right|\\
&\le pI_1^{(h)}+q I_2^{(h)},\end{aligned}$$ 
where 
$$\begin{aligned}I_1^{(h)}&:=\int_\Omega |v|\left|\frac1{f(u_h)}\left(\frac{|u_h|}{k(u_h)}\right)^{p-2}\frac{u_h}{k(u_h)}-\frac1{f(u)}\left(\frac{|u|}{k(u)}\right)^{p-2}\frac{u}{k(u)}\right|dx\\
I_2^{(h)}&:=\int_\Omega a(x)|v|\left|\frac1{f(u_h)}\left(\frac{|u_h|}{k(u_h)}\right)^{q-2}\frac{u_h}{k(u_h)}-\frac1{f(u)}\left(\frac{|u|}{k(u)}\right)^{q-2}\frac{u}{k(u)}\right|dx.\end{aligned}$$
We show that $I_2^{(h)}\to 0$ as $h\to\infty$, the proof for $I_1^{(h)}$ is identical, with $a(\cdot)$ replaced by the constant function 1 and $q$ replaced by $p$. We get by H\"older's inequality
$$\begin{aligned}I_2^{(h)}&\le\int_\Omega \frac{a(x)^{1/q}|v|}{f(u_h)}a(x)^{1/q'}\left|\left(\frac{|u_h|}{k(u_h)}\right)^{q-2}\frac{|u_h|}{k(u_h)}-\left(\frac{|u|}{k(u)}\right)^{q-2}\frac{|u|}{k(u)}\right|dx\\
&\phantom{=}+\int_\Omega a(x)^{1/q}|v|a(x)^{1/q'}\left(\frac{|u|}{k(u)}\right)^{q-1}\left|\frac1{f(u_h)}-\frac1{f(u)}\right| dx\\
&\le \|v\|_{q,a}\left(\int_\Omega a(x)\left|\left(\frac{|u_h|}{k(u_h)}\right)^{q-2}\frac{|u_h|}{k(u_h)}-\left(\frac{|u|}{k(u)}\right)^{q-2}\frac{|u|}{k(u)}\right|^{q'}dx\right)^{1/q'}\\
&\phantom{=}+\|v\|_{q,a}\left|\frac1{f(u_h)}-\frac1{f(u)}\right|\left(\int_\Omega a(x)\left(\frac{|u|}{k(u)}\right)^{q} dx\right)^{1/q'}\\
&\le\mathcal S_{q,a}(\mathcal I^{(h)}+\mathcal J^{(h)}),\end{aligned}$$
where 
$$\begin{aligned}\mathcal I^{(h)}&:=\left(\int_\Omega a(x)\left|\left(\frac{|u_h|}{k(u_h)}\right)^{q-2}\frac{|u_h|}{k(u_h)}-\left(\frac{|u|}{k(u)}\right)^{q-2}\frac{|u|}{k(u)}\right|^{q'}dx\right)^{1/q'}\\
\mathcal J^{(h)}&:=\left|\frac1{f(u_h)}-\frac1{f(u)}\right|\left(\int_\Omega a(x)\left(\frac{|u|}{k(u)}\right)^{q} dx\right)^{1/q'}.
\end{aligned}$$
First, we estimate $\mathcal I^{(h)}$. We pick a subsequence $(u_{h_j})$. Since by Proposition \ref{embeddingW} $W^{1,\mathcal H}_0(\Omega)\hookrightarrow L^q_a(\Omega)$, thus $u_{h_j}\to u$ in $L^q_a(\Omega)$, i.e. $a^{1/q}u_{h_j}\to a^{1/q}u$ in $L^q(\Omega)$. Up to a subsequence, $u_{h_j}\to u$ a.e. in $\Omega$ and there exists a function $w\in L^q(\Omega)$ such that $a^{1/q}|u_{h_j}|\le w$ a.e. in $\Omega$ for all $j$. Therefore, for $j$ sufficiently large and for $\varepsilon\in(0,k(u))$,
$$\begin{aligned}a(x)\left|\left(\frac{|u_{h_j}|}{k(u_{h_j})}\right)^{q-2}\frac{u_{h_j}}{k(u_{h_j})}-\left(\frac{|u|}{k(u)}\right)^{q-2}\frac{u}{k(u)}\right|^{q'}&\le 2^{q'-1}a(x)\left[\left(\frac{|u_{h_j}|}{k(u_{h_j})}\right)^q+\left(\frac{|u|}{k(u)}\right)^q\right]\\
&\le 2^{q'-1}\left[\left(\frac{w}{k(u_{h_j})}\right)^q+\left(\frac{w}{k(u)}\right)^q\right]\\
&\le 2^{q'-1}\left[\frac1{(k(u)-\varepsilon)^q}+\frac1{k(u)^q}\right]w^q\in L^1(\Omega), 
\end{aligned}$$
where in the last inequality we used the fact that $k(u_{h_j})\to k(u)\neq 0$. Consequently, by the dominated convergence theorem, $\mathcal I^{(h_j)}\to 0$ as $j\to\infty$ and by the arbitrariness of the subsequence, $\mathcal I^{(h)}\to 0$ as $h\to\infty$. Now, in order to prove that also $\mathcal J^{(h)}\to 0$, it is enough to prove that $f(u_h)\to f(u)$ as $h\to\infty$. By the facts that $W^{1,\mathcal H}_0(\Omega)\hookrightarrow L^p(\Omega)$ and $W^{1,\mathcal H}_0(\Omega)\hookrightarrow L^q_a(\Omega)$, and by the dominated convergence theorem, we easily get  $$\begin{gathered}\lim_{h\to\infty}\int_\Omega\left(\frac{|u_h|}{k(u_h)}\right)^pdx=\int_\Omega\left(\frac{|u|}{k(u)}\right)^pdx\\
\lim_{h\to\infty}\int_\Omega a(x)\left(\frac{|u_h|}{k(u_h)}\right)^qdx=\int_\Omega a(x)\left(\frac{|u|}{k(u)}\right)^qdx.\end{gathered}$$
This proves that $f(u_h)\to f(u)$ and so $\mathcal J^{(h)}\to 0$. 
Therefore, $(I_2^{(h)})$ converges to zero as $h\to\infty$ and the proof is concluded by the arbitrariness of $v$.
\end{proof}

\noindent
As a consequence of the last proposition, $\mathcal M$ is a $C^1$ Banach manifold. 
By using analogous techniques as in the proof of Proposition \ref{kC1}, it is possible to prove the following result.

\begin{proposition}\label{KC1} 
$K\in C^1(W^{1,\mathcal H}_0(\Omega)\setminus\{0\})$ with $K'(u)=B(u)$ for all $u\in W^{1,\mathcal H}_0(\Omega)\setminus\{0\}$.
\end{proposition}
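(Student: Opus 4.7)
The plan is to mimic the proof of Proposition~\ref{kC1} almost verbatim, simply swapping the role of $u$ with $\nabla u$ throughout: the functional $K$ is obtained from the same Luxemburg construction as $k$ but applied to gradients, so the same implicit-differentiation argument, bounded-functional estimate, and dominated-convergence scheme should go through. No genuinely new ideas are needed; the only care required is in handling the nonlocal dependence of $B(u)$ on $K(u)$ and on the gradient modular.

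First I would establish G\^ateaux differentiability. Starting from the defining identity $\varrho_{\mathcal H}(\nabla u/K(u))=1$, differentiating implicitly as in Lemma~A.1 of \cite{Franzi} gives, for $u\in W^{1,\mathcal H}_0(\Omega)\setminus\{0\}$ and $v\in W^{1,\mathcal H}_0(\Omega)$,
$$
\lim_{\eps\to 0^+}\frac{K(u+\eps v)-K(u)}{\eps}=\langle B(u),v\rangle.
$$
To see $B(u)\in (W^{1,\mathcal H}_0(\Omega))'$, I would apply H\"older's inequality to the numerator of $\langle B(u),v\rangle$, splitting into the $p$- and $q$-contributions, and then invoke the embeddings $W^{1,\mathcal H}_0(\Omega)\hookrightarrow L^p(\Omega)$ and, via Proposition~\ref{embeddingW}\,(iv), $L^{\mathcal H}(\Omega)\hookrightarrow L^q_a(\Omega)$ applied to $|\nabla u|$ and $|\nabla v|$. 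This produces a bound of the form
$$
|\langle B(u),v\rangle|\le\left(\frac{\mathcal S_p\, p}{K(u)^{p-1}}\|\nabla u\|_p^{p-1}+\frac{\mathcal S_{q,a}\, q}{K(u)^{q-1}}\|\nabla u\|_{q,a}^{q-1}\right)\|v\|_{1,\mathcal H},
$$
with a denominator $g(u):=\int_\Omega[p(|\nabla u|/K(u))^p+qa(x)(|\nabla u|/K(u))^q]dx\ge \varrho_{\mathcal H}(\nabla u/K(u))=1$ by the unit ball property, so no division-by-zero issue arises.

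The continuity of $K'=B$ is the main step. Given $u_h\to u\ne 0$ in $W^{1,\mathcal H}_0(\Omega)$ and $v$ with $\|v\|_{1,\mathcal H}\le 1$, I would decompose $|\langle K'(u_h)-K'(u),v\rangle|\le pI_1^{(h)}+qI_2^{(h)}$, where $I_1^{(h)},I_2^{(h)}$ are the gradient analogues of the quantities in the proof of Proposition~\ref{kC1}. The passage to gradients is transparent: since $u_h\to u$ in $W^{1,\mathcal H}_0(\Omega)$, $\nabla u_h\to \nabla u$ in $L^{\mathcal H}(\Omega)^n$, hence in $L^p(\Omega)^n$ and in $L^q_a(\Omega)^n$ by Proposition~\ref{embeddingW}. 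Extracting subsequences with pointwise convergence and an $L^{\mathcal H}$-dominating function (either by Lemma~\ref{4domconv} or directly on $L^p$ and $L^q_a$), together with $K(u_h)\to K(u)\ne 0$, allows a standard dominated-convergence argument to give $\mathcal I^{(h)}\to 0$ in the $L^{q'}(a\,dx)$-norm of the difference of the nonlinearities, while $g(u_h)\to g(u)$ by a second application of dominated convergence, yielding $\mathcal J^{(h)}\to 0$. The same computation with $a\equiv 1$ and $q$ replaced by $p$ handles $I_1^{(h)}$.

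The one potential obstacle is the coupling between numerator and denominator through $K(u)$: since $K(u)$ itself depends on $u$, one must ensure it stays bounded away from zero along the sequence, which follows from $K(u_h)\to K(u)>0$ for $h$ large. Modulo this observation, the proof is a line-by-line transcription of the argument for Proposition~\ref{kC1}, so the conclusion $K\in C^1(W^{1,\mathcal H}_0(\Omega)\setminus\{0\})$ with $K'(u)=B(u)$ follows from the arbitrariness of $v$ in the unit ball of $W^{1,\mathcal H}_0(\Omega)$.
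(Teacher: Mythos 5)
Your proposal is correct and follows exactly the route the paper intends: the paper itself only states that Proposition~\ref{KC1} is proved ``by using analogous techniques as in the proof of Proposition~\ref{kC1},'' and your line-by-line transcription with $u$ replaced by $\nabla u$, $k$ by $K$, and the modular $f$ by its gradient analogue (together with the embeddings of Proposition~\ref{embeddingW} applied to $|\nabla u|$ and $|\nabla v|$) is precisely that adaptation. Your added remark that $K(u_h)\to K(u)>0$ keeps the denominators under control is the same observation used for $k$ in the original argument, so nothing further is needed.
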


\noindent
Reasoning as in Section 3 of \cite{Franzi}, we find that a necessary condition for minimality of the Rayleigh ratio \eqref{R} is 
$$
\frac{\langle K'(u),v\rangle}{K(u)}=\frac{\langle k'(u),v\rangle}{k(u)}\,\quad\mbox{for all }u, v\in W^{1,\mathcal H}_0(\Omega), \,u\neq 0.
$$
Together with Propositions \ref{kC1} and \ref{KC1}, this yields the claim with $\lambda=K(u)/k(u)$, and justifies the following definition. 

\begin{definition}\label{eigen}{\rm We say that $u\in W^{1,\mathcal H}_0(\Omega)\setminus\{0\}$ is an {\it eigenfunction} of \eqref{EL} if 
\begin{equation}\label{weak}\begin{aligned}\int_\Omega\left(p\left|\frac{\nabla u}{K(u)}\right|^{p-2}+q a(x)\left|\frac{\nabla u}{K(u)}\right|^{q-2}\right)&\frac{\nabla u}{K(u)}\cdot\nabla v\,dx \\
&\hspace{-1.5cm}=\lambda S(u)\int_\Omega\left(p\left|\frac{u}{k(u)}\right|^{p-2}+qa(x)\left|\frac{u}{k(u)}\right|^{q-2}\right)\frac{u}{k(u)} v\,dx\end{aligned}\end{equation}
for all $v\in C^\infty_0(\Omega)$, where $S(u)$ is defined as in \eqref{Su}. The real number $\lambda$ is the corresponding {\it eigenvalue}.
The set 
$$
\Lambda:=\big\{\lambda\in\mathbb R\,:\,\lambda \mbox{ is eigenvalue of \eqref{EL}}\big\},
$$ 
is called {\it spectrum}.} 
\end{definition}

\noindent
We remark here that, by a standard density argument, we can take 
any $v\in W^{1,\mathcal H}_0(\Omega)$ as test function in \eqref{weak}.
Testing equation \eqref{weak} with $v=u$ yields 
\begin{equation}
\label{legame-lambda}
\lambda=\frac{K(u)}{k(u)},
\end{equation}
so that if $u\in {\mathcal M}$, then $\lambda=\|\nabla u\|_{\mathcal H}$ and equation \eqref{ELM} holds.

\subsection{$L^\infty$-bound of eigenfunctions}
\label{boundd-L}
\noindent
Assume that condition \eqref{cond-pq} on $p,q,a$ and $\Omega$ holds.
If $u\in \mathcal M$ is a weak solution to the Euler-Lagrange equation
	 \begin{equation}
	 \label{ELM-2}
	 \begin{aligned}-\mathrm{div}&\left(p\left|\frac{\nabla u}{\lambda}\right|^{p-2}\frac{\nabla u}{\lambda}+qa(x)\left|\frac{\nabla u}{\lambda}\right|^{q-2}\frac{\nabla u}{\lambda}\right)=\lambda S(u)(p|u|^{p-2}u+qa(x)|u|^{q-2}u),
	 \end{aligned}
	 \end{equation}
	 with $\lambda=\|\nabla u\|_{\mathcal H}>0$ and $S(u)$ is as in formula 
	 \eqref{SuM}, then by following a standard
	 argument it is possible to prove that there exists a positive constant 
	 $C(n,p,\lambda,a)$ such that
	 $$
	 \|u\|_{L^\infty(\Omega)}\leq C(n,p,\lambda,a).
	 $$
	 Observe that $S(u)\leq q$ from formula \eqref{SuM}  and $\lambda=\|\nabla u\|_{\mathcal H}$.
	 For all $t\ge 0$ and $k>0$, we set $t_k:=\min\{t,k\}.$
	For all $r\ge 2$, $k>0$, the mapping $t\mapsto t|t|_k^{r-2}$ is Lipschitz continuous in $\R$, hence $v=u|u|_k^{r-2}\in W^{1,\mathcal H}_0(\Omega)$ and we have
	$$
	\nabla v=(r-1)|u|_k^{r-2} \nabla u, \,\,\,\,\text{if $|u|\leq k$},\quad\,\,\, 
	\nabla v=|u|_k^{r-2} \nabla u, \,\,\,\,\text{if $|u|\geq k$.}
	$$
	We choose it as a test function, getting (recall that $|s|_k\leq |s|$ for any $s$ and $k$),
\begin{align}
\label{weak2}
 (r-1)&\int_\Omega\left(p\left|\frac{\nabla u}{\lambda}\right|^{p}|u|_k^{r-2}+q a(x)\left|\frac{\nabla u}{\lambda}\right|^{q}|u|_k^{r-2}\right)\chi_{\{|u|\leq k\}}  dx \\
& \leq S(u)\int_\Omega\left(p\left|u\right|^{p}+qa(x)\left|u\right|^{q}\right)|u|_k^{r-2}dx  \notag \\
& \leq C'\int_\Omega(|u|^{p+r-2}+\left|u\right|^{q+r-2})dx,  \notag
\end{align}
for some positive constant $C'$ which depends only on $p,q,a$.\
Taking into account that $a\geq 0$  and using 
Fatou's lemma (by letting $k\to\infty$), it follows that
\begin{equation}\label{weak3}
(r-1)\int_\Omega |\nabla u|^{p}|u|^{r-2} dx \leq C\int_\Omega
(|u|^{p+r-2}+\left|u\right|^{q+r-2})dx,
\end{equation}
with $C$ depending on $p,q,a$ and $\lambda$. In light of condition \eqref{cond-pq}, we know that $q<p^*$. But then this is exactly
the estimate that is usually obtained to get the $L^m$-estimate for any $m\geq 1$ for the
$p$-Laplacian problem $-\Delta_p u=f(u)$ in $\Omega$ and $u=0$ on $\partial\Omega$, for a subcritical nonlinearity $f:\R\to\R$ which satisfies the growth
condition
$$
|f(s)|\leq C|s|^{p-1}+C|s|^{q-1}\quad \text{for all $s\in\R$},\quad 1<q<p^*.
$$
For the explicit computations following inequality \eqref{weak3} and the bootstrap
argument yielding $u\in L^m(\Omega)$ for every $m\geq 1$, one can argue e.g.\
as in \cite{You}. Then similar bootstrap arguments allow to prove the $L^\infty$-
estimate.

\subsection{On the first eigenvalue} 

Throughout this subsection we shall assume the validity of condition \eqref{cond-pq} 
and we endow $W^{1,\mathcal H}_0(\Omega)$ with the $L^\mathcal H$-norm of the gradient. 
First of all, we seek ground states, i.e.\ least energy solutions, of \eqref{EL}.
In particular, the ground states of \eqref{EL} are the minimizers of $K|_\mathcal M$ and the corresponding energy level is the first eigenvalue
$\lambda_\mathcal H^{1}$. Minimizers $u$, if they exist, must satisfy the Euler-Lagrange equation \eqref{weak}, obtained for minimizers
of the quotients $\|\nabla v\|_{\mathcal H}/\|v\|_{\mathcal H}$ among nonzero functions. 
Since $u\in {\mathcal M}$, then equation \eqref{weak} can be reduced to
\eqref{ELM} since it turns out that $\|\nabla u\|_{\mathcal H}=\lambda$ by formula \eqref{legame-lambda}.
After giving the proof of Theorem~\ref{main1}, we shall collect some properties 
of the first eigenvalue.
\medskip

\noindent
$\bullet$ {\it {Proof of Theorem \ref{main1}}.} 
By the Poincar\'e-type inequality \eqref{poinc}, there exists $C>0$ independent of $v$ for which  
$$\frac{\|\nabla v\|_\mathcal H}{\|v\|_\mathcal H}\ge \frac1C$$
for all $v\in W^{1,\mathcal H}_0(\Omega)\setminus\{0\}$. Thus $\lambda_\mathcal H^{1}>0$. Now, let $(v_h)\subset \mathcal M$ be such that
$$\lim_{h\to\infty}\|\nabla v_h\|_\mathcal H=\lambda_\mathcal H^{1}.$$
Clearly, $(v_h)$ is bounded in the reflexive Banach space $W^{1,\mathcal H}_0(\Omega)$ and so we can extract a subsequence $(v_{h_j})$ weakly converging to $u$ in $W^{1,\mathcal H}_0(\Omega)$. Therefore, by Proposition \ref{sob-poinc}-$(iii)$, $\|v_{h_j}\|_\mathcal H\to\|u\|_\mathcal H$ as $j\to\infty$, and so $\|u\|_\mathcal H=1$. Since the norm is weakly lower semicontinuous, 
$$\|\nabla u\|_\mathcal H\le\liminf_{j\to\infty}\|\nabla v_{h_j}\|_\mathcal H=\lambda_\mathcal H^{1}.$$
This proves that $u$ is a minimizer of \eqref{Rr}. Clearly also $|u|\ge0$ is a minimizer, so we may assume $u\geq 0$ a.e.\
Also, the Euler-Lagrange equation \eqref{ELM} is satisfied. Taking into account that $S(u)\geq 0$, we get
\begin{equation}\label{weak4}
\int_\Omega\left(p\frac{|\nabla u|^{p-2}}{\lambda^{p-2}}+q a(x)\frac{|\nabla u|^{q-2}}{\lambda^{q-2}}\right)
\frac{\nabla u}{\lambda}\cdot\nabla \varphi \,dx\geq 0\quad\text{for all $\varphi\in W^{1,\mathcal H}_0(\Omega)$, $\varphi\geq 0$,}
\end{equation}
namely $u$ is a nonnegative supersolution for the equation
\begin{equation}
	\label{parteP}
-\mathrm{div}\Big(p\left|\frac{\nabla u}{\lambda}\right|^{p-2}\frac{\nabla u}{\lambda}+qa(x)\left|\frac{\nabla u}{\lambda}\right|^{q-2}\frac{\nabla u}{\lambda}\Big)=0.
\end{equation}
Considering now radii $r_2>r_1>0$, $k\geq 0$, a cut-off $\eta$ with  $\eta=1$ on $B_{r_1}$ and $\eta=0$ on $B_{r_2}^c$ and
choosing the bounded test function $\varphi :=(u/\lambda-k)_-\eta^q$, if $h(x,t):=t^{p-1}+a(x)t^{q-1}$, recalling that $p<q$ it holds
 \begin{equation}
 \label{first-ma}
 q^2\int_{B_{r_2}}h\Big(x,\frac{|\nabla u|}\lambda\Big)\Big(\frac u\lambda-k\Big)_-|\nabla \eta|\eta^{q-1}\,dx\geq p\int_{B_{r_2}}\mathcal H\Big(x,\frac{|\nabla u|}\lambda\Big)\eta^q\chi_{\{u/\lambda\leq k\}}\,dx.
 \end{equation}
Young's inequality with exponents $(p',p)$ yields for some $c_1(p,q)>0$,
 \begin{align*}
 q^2\int_{B_{r_2}}&\Big(\frac{|\nabla u|}\lambda\Big)^{p-1}\Big(\frac u\lambda-k\Big)_-|\nabla\eta|\eta^{q-1}\,dx \\
 &\leq \frac p2\int_{B_{r_2}}\Big(\frac{|\nabla u|}\lambda\Big)^p\eta^{p'(q-1)}\chi_{\{u/\lambda\leq k\}}\,dx +c_1(p,q)\int_{B_{r_2}}\Big(\frac u\lambda-k\Big)^p_-|\nabla \eta|^p\,dx \\
  &\leq \frac p2\int_{B_{r_2}}\Big(\frac{|\nabla u|}\lambda\Big)^p\eta^{q}\chi_{\{u/\lambda\leq k\}}\,dx +c_1(p,q)\int_{B_{r_2}}
  \left(\Big(\frac u\lambda-k\Big)_- \|\nabla \eta\|_{L^\infty}\right)^p\,dx, 
 \end{align*}
 where we have used that $p'(q-1)\geq q$, since $q>p$ by assumption. 
 Analogously, using Young's inequality with exponents $(q',q)$ also yields, for some $c_2(p,q)>0$,
 \begin{align*}
 q^2\int_{B_{r_2}}& a(x)\Big(\frac{|\nabla u|}\lambda\Big)^{q-1}\Big(\frac u\lambda-k\Big)_-|\nabla \eta|\eta^{q-1}\,dx \\
 &\leq \frac p2\int_{B_{r_2}}a(x)\Big(\frac{|\nabla u|}\lambda\Big)^q\eta^{q}\chi_{\{u/\lambda\leq k\}}\,dx +c_2(p,q)\int_{B_{r_2}}a(x)
 \left(\Big(\frac u\lambda-k\Big)_- \|\nabla\eta\|_{L^\infty}\right)^q\,dx.
 \end{align*}
Whence, by absorbing the first two terms of the right-hand sides into \eqref{first-ma}, we conclude that
 \[
 \int_{B_{r_1}}\mathcal H\Big(x,\frac{|\nabla u|}\lambda\Big)\chi_{\{u/\lambda\leq k\}}\,dx\leq c(p,q)\int_{B_{r_2}}\mathcal H\Big( x,\Big(\frac u\lambda-k\Big)_-\frac{1}{(r_2-r_1)}\Big)\,dx,
 \]
for some positive constant $c(p,q)$,
since $\|\nabla \eta\|_{L^\infty}$ is of order $(r_2-r_1)^{-1}$.
It follows that the function $u/\lambda$ belongs to the De Giorgi class $DG_-$ (cf.\ Section 6 of \cite{BarColMin1}).
Then, in turn, by a slight modification of Theorem 3.5 of \cite{BarColMin1} (in order to allow supersolutions of 
equation \eqref{parteP}), $u$ satisfies the weak Harnack
inequality, yielding
$$
\inf_{B_r(x)} u\geq \frac{1}{c}\Big(\dashint_{B_{2r}(x)} u^\eps(y) dy\Big)^{1/\eps},
$$
for some constants $c\geq 1$ and $\eps\in (0,1)$ and for 
every ball $B_{2r}(x)\subset\Omega$. This immediately yields $u>0$, by a standard argument.
The boundedness of eigenfunctions follows by 
subsection \ref{boundd-L}. Finally, the stability and symmetry properties 
follow by Theorems~\ref{stability},~\ref{faberK} and \ref{partial}.
\hfill$\Box$

%


\begin{remark}\label{ext}{\rm If $\Omega\subset\tilde{\Omega}$, then for all $u\in W^{1,\mathcal H}_0(\Omega)$, the extension by zero 
$$\tilde{u}:=\begin{cases}u\quad&\mbox{in }\Omega,\\
0\quad&\mbox{in }\tilde{\Omega}\setminus\Omega\end{cases}$$ 
belongs to $W^{1,\tilde{\mathcal H}}_0(\tilde{\Omega})$, where $\tilde{\mathcal H}(x,t):=t^p+\tilde{a}(x)t^q$ for all $(x,t)\in\tilde{\Omega}\times[0,\infty)$, and $\tilde{a}$ extends by zero $a$ in $\tilde{\Omega}\setminus\Omega$. Indeed, being $W^{1,\mathcal H}_0(\Omega)\hookrightarrow W^{1,p}_0(\Omega)$, we can use a classical result in $W^{1,p}_0(\Omega)$ (see e.g. Proposition 9.18 of \cite{Brezis}) to obtain that $$\nabla \tilde{u}=\begin{cases}\nabla u\quad&\mbox{in }\Omega,\\
0\quad&\mbox{in }\tilde{\Omega}\setminus\Omega.\end{cases}$$
Since $u\in W^{1,\mathcal H}_0(\Omega)$, there exists a sequence $(\varphi_h)\subset C^\infty_0(\Omega)$ such that $\|\nabla \varphi_h-\nabla u\|_\mathcal H\to 0$ as $h\to\infty$. By Proposition 2.1.11 of \cite{Base}, norm convergence and modular convergence are equivalent, thus $\varrho_\mathcal H(\nabla \varphi_h-\nabla u)\to 0$ as $h\to\infty$.  Extending by zero $a$ and each $\varphi_h$ in $\tilde{\Omega}\setminus \Omega$, we have $(\varphi_h)\subset C^\infty_0(\tilde{\Omega})$, and so    
$$\begin{aligned}\varrho_\mathcal H(\nabla\varphi_h-\nabla u)&=\int_\Omega(|\nabla\varphi_h-\nabla u|^p+a(x)|\nabla\varphi_h-\nabla u|^q)dx\\
&=\int_{\tilde{\Omega}}(|\nabla\varphi_h-\nabla \tilde{u}|^p+\tilde{a}(x)|\nabla\varphi_h-\nabla \tilde{u}|^q)dx=\tilde{\varrho}_{\tilde{\mathcal H}}(\nabla\varphi_h-\nabla \tilde{u}).
\end{aligned}$$
Therefore, $\|\nabla\varphi_h-\nabla \tilde{u}\|_{L^{\tilde{\mathcal H}}(\tilde{\Omega})}\to 0$ as $h\to\infty$ and so $\tilde{u}\in W^{1,\tilde{\mathcal H}}_0(\tilde{\Omega})$.}  
\end{remark}

\begin{theorem}[Stability in domains] 
	\label{stability}
	Let $(\Omega_h)$ be a strictly increasing sequence of 
	open subsets of $\mathbb R^n$ such that
$$
\Omega=\bigcup_{h=1}^\infty\Omega_h.
$$
Then 
$$\label{vardom}\lim_{h\to\infty}\lambda^1_\mathcal H(\Omega_h)=\lambda^1_\mathcal H,$$
(we omit the dependence when the domain is $\Omega$). 
\end{theorem}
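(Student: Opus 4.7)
The plan is to combine monotonicity of the first eigenvalue with respect to domain inclusion with a density argument. Since $\Omega_h\subset\Omega_{h+1}\subset\Omega$ for all $h$, every function in $W^{1,\mathcal H}_0(\Omega_h)$ can be extended by zero to an element of $W^{1,\mathcal H}_0(\Omega)$, as established in Remark~\ref{ext}, without changing either $\|\nabla u\|_{\mathcal H}$ or $\|u\|_{\mathcal H}$. The Rayleigh ratio defining $\lambda^1_{\mathcal H}(\Omega_h)$ is therefore the restriction of the one defining $\lambda^1_{\mathcal H}(\Omega)$ to a smaller set of admissible functions, so
\[
\lambda^1_{\mathcal H}(\Omega)\le \lambda^1_{\mathcal H}(\Omega_{h+1})\le \lambda^1_{\mathcal H}(\Omega_h).
\]
The sequence $\bigl(\lambda^1_{\mathcal H}(\Omega_h)\bigr)$ is thus non-increasing and bounded below, so it admits a limit $\ell\ge \lambda^1_{\mathcal H}(\Omega)$.

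For the reverse inequality, fix $\varepsilon>0$ and let $u\in\mathcal M$ be a minimizer realizing $\lambda^1_{\mathcal H}(\Omega)$ whose existence is guaranteed by Theorem~\ref{main1}, so that $\|u\|_\mathcal H=1$ and $\|\nabla u\|_\mathcal H=\lambda^1_{\mathcal H}(\Omega)$. By the very definition of $W^{1,\mathcal H}_0(\Omega)$ as the completion of $C^\infty_0(\Omega)$ in the $W^{1,\mathcal H}$-norm, there exists $\varphi\in C^\infty_0(\Omega)$ with $\|\nabla u-\nabla \varphi\|_{\mathcal H}<\varepsilon$; applying the Poincar\'e inequality \eqref{poinc} to $u-\varphi\in W^{1,\mathcal H}_0(\Omega)$ then yields $\|u-\varphi\|_{\mathcal H}\le C\varepsilon$, where $C$ is independent of $\varepsilon$. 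Since $\mathrm{supp}\,\varphi$ is a compact subset of $\Omega=\bigcup_h \Omega_h$ and the sequence $(\Omega_h)$ is increasing and open, there exists $h_0$ such that $\mathrm{supp}\,\varphi\subset \Omega_h$ for every $h\ge h_0$, whence $\varphi\in C^\infty_0(\Omega_h)\subset W^{1,\mathcal H}_0(\Omega_h)$ is admissible for $\lambda^1_{\mathcal H}(\Omega_h)$. Consequently, for $\varepsilon$ so small that $C\varepsilon<1$,
\[
\lambda^1_{\mathcal H}(\Omega_h)\le \frac{\|\nabla \varphi\|_\mathcal H}{\|\varphi\|_{\mathcal H}} \le \frac{\lambda^1_{\mathcal H}(\Omega)+\varepsilon}{1-C\varepsilon}.
\]
Letting $h\to\infty$ and then $\varepsilon\to 0$ delivers $\ell\le \lambda^1_{\mathcal H}(\Omega)$, which together with the previous paragraph closes the argument.

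The main (modest) technical point is guaranteeing that the approximating $\varphi$ both brings $\|\nabla\varphi\|_{\mathcal H}$ close to $\lambda^1_{\mathcal H}(\Omega)$ and keeps $\|\varphi\|_{\mathcal H}$ bounded away from $0$ uniformly, so that the Rayleigh quotient of $\varphi$ stays close to that of $u$; the Poincar\'e inequality of Proposition~\ref{sob-poinc}(iv) is what makes this control automatic. Everything else is purely set-theoretic monotonicity plus the standard density built into the definition of $W^{1,\mathcal H}_0(\Omega)$ adopted in Section~\ref{recalls}.
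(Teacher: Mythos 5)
Your proposal is correct and follows essentially the same route as the paper: monotonicity via the zero-extension of Remark~\ref{ext}, followed by picking a smooth compactly supported competitor whose support eventually lies in some $\Omega_h$. The only difference is cosmetic — the paper invokes density of $C^\infty_0(\Omega)$ to replace the infimum directly, whereas you approximate a minimizer and control the Rayleigh quotient via the Poincar\'e inequality; both are valid instances of the same argument.
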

\begin{proof} Extending the functions $u\in W^{1,\mathcal H}_0(\Omega_h)$ as zero in $\Omega\setminus\Omega_h$, by Remark \ref{ext} we get $u\in W^{1,\mathcal H}_0(\Omega)$ and clearly 
\begin{equation}\label{ge}\lambda^1_\mathcal H(\Omega_1)\ge \lambda^1_\mathcal H(\Omega_2)\ge\dots\ge\lambda^1_\mathcal H.\end{equation}
On the other hand, by density, $\lambda^1_\mathcal H=\inf_{u\in C^\infty_0(\Omega)\setminus\{0\}}\|\nabla u\|_\mathcal H/\|u\|_\mathcal H$. So, fixed $\varepsilon>0$, we can find $\varphi\in C^\infty_0(\Omega)$ for which 
\begin{equation}\label{varvar}\lambda^1_\mathcal H>\frac{\|\nabla \varphi\|_\mathcal H}{\|\varphi\|_\mathcal H}-\varepsilon.\end{equation}
Since the support of $\varphi$ is compact, it is covered by a finite number of $\Omega_h$'s, hence for $h$ sufficiently large $\mathrm{supp}\varphi\subset\Omega_h$. Whence, 
$$\lambda^1_\mathcal H(\Omega_h)\le\frac{\|\nabla \varphi\|_{L^\mathcal H(\Omega_h)}}{\|\varphi\|_{L^\mathcal H(\Omega_h)}}=\frac{\|\nabla \varphi\|_\mathcal H}{\|\varphi\|_\mathcal H}$$  
and by \eqref{varvar}
$$\lambda^1_\mathcal H>\lambda^1_\mathcal H(\Omega_h)-\varepsilon\quad\mbox{ for $h$ large.}$$
By the arbitrariness of $\varepsilon$
$$\lambda^1_\mathcal H\ge\lim_{h\to\infty}\lambda^1_\mathcal H(\Omega_h)$$
which, combined with \eqref{ge}, gives the conclusion. 
\end{proof}

\begin{theorem}[Isoperimetric property] 
	\label{faberK}
	Let $a\equiv 1$ and $\Omega^*$ be the ball of $\mathbb R^n$ such that $|\Omega^*|=|\Omega|$. Then we have
\begin{equation}
\label{FK-p}
\lambda^1_\mathcal H(\Omega^*)\le\lambda^1_\mathcal H.
\end{equation} 
Moreover, if equality holds in \eqref{FK-p}, then $\Omega$ is a ball. In other
words, balls uniquely minimize the first eigenvalue among sets with given $n$-dimensional Lebesgue measure.
\end{theorem}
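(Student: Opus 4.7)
\smallskip
\noindent
\textbf{Proof proposal.} The plan is to use the Schwarz symmetrization (symmetric decreasing rearrangement), exploiting the crucial fact that when $a\equiv 1$ the $N$-function $\mathcal H(x,t)=t^p+t^q$ is independent of $x$, so the modular $\varrho_{\mathcal H}$ is rearrangement invariant. Let $u\in\mathcal M$ be a nonnegative minimizer for $\lambda^1_{\mathcal H}(\Omega)$, whose existence is guaranteed by Theorem~\ref{main1}, and let $u^*$ denote its Schwarz symmetrization, which belongs to $W^{1,\mathcal H}_0(\Omega^*)$ after extension by zero (use Remark \ref{ext} together with the fact that $u^*\in W^{1,p}_0(\Omega^*)\cap W^{1,q}_0(\Omega^*)$ as a consequence of the classical P\'olya–Szeg\H{o} inequality in each space).

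First I would show $\|u^*\|_{\mathcal H}=\|u\|_{\mathcal H}=1$. By equimeasurability of the rearrangement, $\int_{\Omega^*}|u^*|^r\,dx=\int_\Omega |u|^r\,dx$ for every $r\ge 1$, hence for every $\gamma>0$,
\[
\varrho_{\mathcal H}(u^*/\gamma)=\int_{\Omega^*}\bigl(|u^*/\gamma|^p+|u^*/\gamma|^q\bigr)dx=\int_\Omega\bigl(|u/\gamma|^p+|u/\gamma|^q\bigr)dx=\varrho_{\mathcal H}(u/\gamma),
\]
and the unit ball property (Proposition \ref{properties}-$(ii)$) gives the claimed norm identity. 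Next, I would apply the classical P\'olya–Szeg\H{o} inequality to the functions $u/\gamma$ with $\gamma:=\|\nabla u\|_{\mathcal H}=\lambda^1_{\mathcal H}(\Omega)$, termwise for the exponents $p$ and $q$ (both $\ge 1$), obtaining
\[
\int_\Omega\left(\Bigl|\tfrac{\nabla u}{\gamma}\Bigr|^p+\Bigl|\tfrac{\nabla u}{\gamma}\Bigr|^q\right)dx\ge \int_{\Omega^*}\left(\Bigl|\tfrac{\nabla u^*}{\gamma}\Bigr|^p+\Bigl|\tfrac{\nabla u^*}{\gamma}\Bigr|^q\right)dx,
\]
i.e. $\varrho_{\mathcal H}(\nabla u^*/\gamma)\le \varrho_{\mathcal H}(\nabla u/\gamma)\le 1$. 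By the unit ball property once more, $\|\nabla u^*\|_{\mathcal H}\le \gamma=\|\nabla u\|_{\mathcal H}$. Since $u^*\in\mathcal M(\Omega^*)$, it is admissible in the variational definition \eqref{Rr} on $\Omega^*$, so
\[
\lambda^1_{\mathcal H}(\Omega^*)\le \|\nabla u^*\|_{\mathcal H}\le \|\nabla u\|_{\mathcal H}=\lambda^1_{\mathcal H}(\Omega),
\]
which is \eqref{FK-p}.

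For the rigidity statement, assume equality holds. Then both P\'olya–Szeg\H{o} inequalities must be equalities, in particular
\[
\int_\Omega|\nabla u|^p\,dx=\int_{\Omega^*}|\nabla u^*|^p\,dx.
\]
At this point I would invoke the Brothers–Ziemer equality case for the $p$-energy, which yields that $u$ coincides, up to a translation, with its Schwarz rearrangement $u^*$ provided that the level set $\{u^*=t\}$ has zero Lebesgue measure for a.e.\ $t$ in the range of $u^*$, equivalently that the critical set $\{\nabla u^*=0\}\cap \{0<u^*<\|u^*\|_\infty\}$ is negligible. This absence of flat parts of $u^*$ is the main technical obstacle: I would derive it from the fact that $u^*$ itself is a nonnegative minimizer on $\Omega^*$, so it solves the Euler–Lagrange equation \eqref{ELM} with $a\equiv 1$, is strictly positive in $\Omega^*$ by the weak Harnack inequality used in the proof of Theorem~\ref{main1} (\cite{BarColMin1}), and being radial one can integrate the ODE satisfied by $u^*$ to see that $(u^*)'(r)<0$ for $0<r<R:=|\Omega^*|^{1/n}/\omega_n^{1/n}$. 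Once this is in hand, Brothers–Ziemer forces $u$ to be a radial translate of $u^*$ on $\Omega$, and since $u=0$ on $\partial\Omega$ with $u>0$ inside, $\Omega$ must coincide with a ball of the same measure as $\Omega^*$. \hfill$\Box$
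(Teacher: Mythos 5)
Your proof of the inequality \eqref{FK-p} is essentially identical to the paper's: Schwarz symmetrization of a nonnegative first eigenfunction, equimeasurability to get $\|u^*\|_{L^{\mathcal H}(\Omega^*)}=\|u\|_{\mathcal H}$, termwise P\'olya--Szeg\H{o} at the level of the modular evaluated at $\nabla u/\|\nabla u\|_{\mathcal H}$, and the unit ball property to pass back to the Luxemburg norm. Where you diverge is in the rigidity step. Both arguments first reduce, via \eqref{norm-mod} and the fact that each of the two P\'olya--Szeg\H{o} inequalities holds separately, to equality in the $p$-Dirichlet energy, $\int_\Omega|\nabla u|^p\,dx=\int_{\Omega^*}|\nabla u^*|^p\,dx$. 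At that point the paper simply cites \cite{FuscoMaggiPratelli} to conclude that the superlevel sets of the eigenfunction are balls, whereas you invoke the Brothers--Ziemer characterization of equality and, crucially, address its nondegeneracy hypothesis $|\{\nabla u^*=0\}\cap\{0<u^*<\|u^*\|_\infty\}|=0$ head-on: you observe that the equality chain forces $u^*$ to be a first eigenfunction on $\Omega^*$, hence positive by the weak Harnack inequality, and that integrating the radial Euler--Lagrange equation yields $(u^*)'(r)<0$ on $(0,R)$ because the flux through $\partial B_r$ equals a strictly negative quantity and the map $t\mapsto p(t/\lambda)^{p-1}+q(t/\lambda)^{q-1}$ is strictly increasing from $0$. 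This is a legitimate and in fact more self-contained route: it makes explicit a degeneracy issue that the paper's one-line citation leaves implicit, at the cost of the extra ODE argument (which you only sketch, but which is standard once one tests the weak formulation with radial functions). Both approaches land on the same conclusion that $\{u>0\}$ is a translate of $\Omega^*$ and hence $\Omega$ is a ball.
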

\begin{proof} 
Let us prove \eqref{FK-p}.\
Let $u^*$ be the Schwarz symmetrization of a given 
nonnegative function $u\in W^{1,\mathcal H}_0(\Omega)$,
namely the unique radially symmetric and decreasing function with
$$
|\{x\in\Omega^*: u^*(x)>t\}|=|\{x\in\Omega: u(x)>t\}|,\quad \text{for all $t>0$.}
$$
 Since $a\equiv 1$, by (i) of Proposition~\ref{embeddingW}, we have
 $$
 W^{1,\mathcal H}_0(\Omega)\hookrightarrow W^{1,p}_0(\Omega),\qquad
  W^{1,\mathcal H}_0(\Omega)\hookrightarrow W^{1,q}_0(\Omega).
 $$
Then, in light of the P\'olya-Szeg\"o's inequality, we get  $u^*\in W^{1,p}_0(\Omega^*)\cap W^{1,q}_0(\Omega^*)$ and
$$
\varrho^*_\mathcal H(\nabla u^*)=\int_{\Omega^*}(|\nabla u^*|^p+|\nabla u^*|^q)dx\le\int_\Omega(|\nabla u|^p+|\nabla u|^q)dx=\varrho_\mathcal H(\nabla u),
$$
so $u^*\in W^{1,\mathcal H}_0(\Omega^*)$. For $u\in W^{1,\mathcal H}_0(\Omega)\setminus\{0\}$,
$$\begin{aligned}\varrho^*_\mathcal H\left(\frac{\nabla u^*}{\|\nabla u\|_\mathcal H}\right)&=\frac1{\|\nabla u\|^p_\mathcal H}\int_{\Omega^*}|\nabla u^*|^p dx + \frac1{\|\nabla u\|^q_\mathcal H}\int_{\Omega^*}|\nabla u^*|^q dx\\
& \le \frac1{\|\nabla u\|^p_\mathcal H}\int_{\Omega}|\nabla u|^p dx + \frac1{\|\nabla u\|^q_\mathcal H}\int_{\Omega}|\nabla u|^q dx=\varrho_\mathcal H\left(\frac{\nabla u}{\|\nabla u\|_\mathcal H}\right)=1
\end{aligned}$$
which gives, by the unit ball property,
$$
\|\nabla u^*\|_{L^\mathcal H(\Omega^*)}\le\|\nabla u\|_\mathcal H.
$$
On the other hand, since Schwarz symmetrization preserves all $L^p$-norms,
$$\varrho^*_\mathcal H(u^*)=\int_{\Omega^*} (|u^*|^p+|u^*|^q)dx=\int_\Omega(|u|^p+|u|^q)dx=\varrho_\mathcal H(u).$$
Thus, again the unit ball property gives $$\|u^*\|_{L^\mathcal H(\Omega^*)}=\|u\|_\mathcal H.$$
Hence, if we take $u=u^1_{\mathcal H}\geq 0$ a.e., we obtain 
$$
\lambda^1_\mathcal H(\Omega^*)\le\frac{\|\nabla (u^1_\mathcal H)^*\|_{L^\mathcal H(\Omega^*)}}{\|(u^1_\mathcal H)^*\|_{L^\mathcal H(\Omega^*)}}\le\frac{\|\nabla u^1_\mathcal H\|_\mathcal H}{\|u^1_\mathcal H\|_\mathcal H}=\lambda^1_\mathcal H,
$$
which concludes the proof. Assume now that equality holds in inequality \eqref{FK-p}
and consider a first nonnegative eigenfunction $w$ for $\lambda^1_{\mathcal H}$.
Then, recalling that $\|w^*\|_{L^\mathcal H(\Omega^*)}=
\|w\|_{\mathcal H}
$, we conclude by the very definition of $\lambda^1_{\mathcal H}$ that
$\|\nabla w^*\|_{L^\mathcal H(\Omega^*)}=\|\nabla w\|_\mathcal H$.
This, in light of \eqref{norm-mod} gives
\begin{equation}
\label{mod==}
\varrho^*_{\mathcal H}\left(\frac{\nabla w^*}{\|\nabla w\|_{\mathcal H}}\right)=\varrho_{\mathcal H}\left(\frac{\nabla w}{\|\nabla w\|_{\mathcal H}}\right).
\end{equation}
Since, separately,  we have 
$$
\int_{\Omega^*}|\nabla w^*|^pdx\le\int_\Omega |\nabla w|^pdx,\qquad
\int_{\Omega^*}|\nabla w^*|^q dx\le\int_\Omega |\nabla w|^q dx, 
$$
we deduce from identity \eqref{mod==} (in which the denominators agree), that
$$
\|\nabla w^*\|_{L^p(\Omega^*)}=\|\nabla w\|_{L^p(\Omega)}.
$$
This implies (see e.g. \cite{FuscoMaggiPratelli})
that the superlevels of $w$ are balls and, thus, 
$\Omega$ is a ball, completing the proof.
\end{proof}

\begin{remark}\rm
Theorem~\ref{faberK} represents an extension to the double phase case of the so called Faber-Krahn inequality (cf.\
\cite{FuscoMaggiPratelli} for the single phase case).
It was firstly shown by Faber and Krahn \cite{faber,krahn,krahn2} that the first eigenvalue of 
$-\Delta$ on a bounded open set of $\R^2$ of given area attains its minimum value if and only
if is a disk, namely the gravest principal tone is obtained
in the case of a circular membrane, as conjectured by Lord Rayleigh in 1877 \cite{lord}.
\end{remark}


\noindent
A subset $H$ of $\R^N$ is called a {\it polarizer} if it is a closed affine half-space
of $\R^N$, namely the set of points $x$ which satisfy $\alpha\cdot x\leq \beta$
for some $\alpha\in \R^N$ and $\beta\in\R$ with $|\alpha|=1$. Given $x$ in $\R^N$
and a polarizer $H$ the reflection of $x$ with respect to the boundary of $H$ is
denoted by $x_H$. The polarization of a function $u:\R^N\to\R^+$ by a polarizer $H$
is the function $u^H:\R^N\to\R^+$ defined by
\begin{equation}
 \label{polarizationdef}
u^H(x)=
\begin{cases}
 \max\{u(x),u(x_H)\}, & \text{if $x\in H$} \\
 \min\{u(x),u(x_H)\}, & \text{if $x\in \R^N\setminus H$.} \\
\end{cases}
\end{equation}
The {\em polarization} $C^H\subset\R^N$ of a set $C\subset\R^N$ is 
defined as the unique set which satisfies $\chi_{C^H}=(\chi_C)^H$,
where $\chi$ denotes the characteristic function. This operation 
should not be confused with $C_H$ which denotes the {\em reflection} of $C$
with respect to $\partial H$.
The polarization $u^H$ of a positive function $u$ defined on $C\subset \R^N$
is the restriction to $C^H$ of the polarization of the extension $\tilde u:\R^N\to\R^+$ of 
$u$ by zero outside $C$. The polarization of a function which may change sign is defined
by $u^H:=|u|^H$, for any given polarizer $H$.

\begin{theorem}[Partial simmetries] 
	\label{partial}
Let $a\equiv 1$, $H\subset\R^N$ be a half-space and assume that 
$\Omega=\Omega^H$. Then there exists a nonnegative first eigenfunction $u\in W^{1,\mathcal H}_0(\Omega)$ such that $u=u^H$.
\end{theorem}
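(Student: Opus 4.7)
The proof I propose proceeds via polarization, exploiting idempotency of the polarization operator together with the well-known rearrangement properties of $u\mapsto u^H$ on $L^r$ and $W^{1,r}_0$ norms.

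Let $v\in\mathcal M$ be a nonnegative first eigenfunction, whose existence is granted by Theorem~\ref{main1}. I would set $u:=v^H$ and show that $u$ is itself a first eigenfunction satisfying $u=u^H$. Since polarization is idempotent (a direct check from definition \eqref{polarizationdef} using that the reflection $x\mapsto x_H$ is an involution and swaps $H$ with its complement shows $(v^H)^H=v^H$), the identity $u=u^H$ will be automatic once $u$ is identified as a first eigenfunction.

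The verification that $u$ is a first eigenfunction rests on three standard facts about polarization, all valid under the hypothesis $\Omega=\Omega^H$:
\begin{enumerate}
\item[(a)] If $v\in W^{1,r}_0(\Omega)$ and $\Omega=\Omega^H$, then $v^H\in W^{1,r}_0(\Omega)$ for every $r\in(1,\infty)$; this, together with the embeddings $W^{1,\mathcal H}_0(\Omega)\hookrightarrow W^{1,p}_0(\Omega)\cap W^{1,q}_0(\Omega)$ (Proposition~\ref{embeddingW}-$(i)$ and the fact that $a\equiv 1$ also places us in $W^{1,q}_0(\Omega)$), gives $v^H\in W^{1,\mathcal H}_0(\Omega)$.
\item[(b)] Polarization preserves $L^r$-norms, so
\[
\int_\Omega |v^H|^p\,dx=\int_\Omega |v|^p\,dx,\qquad \int_\Omega |v^H|^q\,dx=\int_\Omega |v|^q\,dx,
\]
which, since $a\equiv 1$, yields $\varrho_\mathcal H(v^H)=\varrho_\mathcal H(v)=1$, and hence $\|u\|_\mathcal H=\|v\|_\mathcal H=1$ by the unit ball property (Proposition~\ref{properties}-$(ii)$); in particular $u\in\mathcal M$.
\item[(c)] The Pólya--Szegő-type inequality for polarization gives, for every $r\in(1,\infty)$,
\[
\int_\Omega |\nabla v^H|^r\,dx\le \int_\Omega |\nabla v|^r\,dx.
\]
Applying this with $r=p$ and $r=q$, I get
$\varrho_\mathcal H(\nabla v^H/\|\nabla v\|_\mathcal H)\le\varrho_\mathcal H(\nabla v/\|\nabla v\|_\mathcal H)=1$, and thus once again by the unit ball property $\|\nabla u\|_\mathcal H\le\|\nabla v\|_\mathcal H=\lambda^1_\mathcal H$.
\end{enumerate}

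Combining (b) and (c), $u\in\mathcal M$ realizes $\|\nabla u\|_\mathcal H\le\lambda^1_\mathcal H$, so by the very definition of $\lambda^1_\mathcal H$ we have equality, meaning $u$ is a nonnegative first eigenfunction. By the idempotency noted above, $u^H=(v^H)^H=v^H=u$, which concludes the proof.

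The only nontrivial point is step~(a): preservation of the zero boundary condition under polarization when $\Omega=\Omega^H$. The cleanest route is to approximate $v$ in $W^{1,p}_0(\Omega)\cap W^{1,q}_0(\Omega)$ by $C^\infty_c(\Omega)$ functions, polarize the approximating sequence (this is a standard fact; polarization is continuous with respect to $W^{1,r}$ norms and maps $C^\infty_c(\Omega)$-type supports correctly since $\Omega=\Omega^H$), and pass to the limit to conclude that $v^H\in W^{1,r}_0(\Omega)$ for $r\in\{p,q\}$, hence in $W^{1,\mathcal H}_0(\Omega)$ in light of the embeddings and of the modular convergence argument already employed in Remark~\ref{ext}.
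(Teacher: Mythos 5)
Your proof is correct, and it takes a genuinely different route from the paper's. You polarize an already-existing nonnegative minimizer $v$ (from Theorem~\ref{main1}) and invoke idempotency of $u\mapsto u^H$ to get $u=u^H$ for free; the paper instead applies the symmetric Ekeland variational principle of \cite{sq} to a nonnegative minimizing sequence, producing a new minimizing sequence $(\tilde v_h)$ with $\||\tilde v_h|^H-\tilde v_h\|_p\to 0$, and then extracts the symmetric minimizer as a weak limit via the compact embedding of Proposition~\ref{sob-poinc}-$(iii)$ and the $L^p$-contractivity of polarization. The two arguments rest on the same rearrangement input (Proposition 2.3 of \cite{vs}: for nonnegative $v\in W^{1,p}_0(\Omega)\cap W^{1,q}_0(\Omega)$ with $\Omega=\Omega^H$ one has $v^H\in W^{1,\mathcal H}_0(\Omega)$ with $\|v^H\|_{\mathcal H}=\|v\|_{\mathcal H}$ and $\|\nabla v^H\|_{\mathcal H}=\|\nabla v\|_{\mathcal H}$ --- in fact equality holds in your step (c), not merely $\le$, since polarization just swaps values between $x$ and $x_H$); given that these identities are exact and $a\equiv 1$, your direct argument is shorter and entirely sufficient. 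What the Ekeland route buys is robustness: it does not need idempotency or an exact invariance of the constraint and of the functional under the symmetrization, so it generalizes to settings where polarization only decreases the energy approximately or the symmetrized competitor cannot be compared pointwise with the original minimizer; here that generality is not needed. Your step (a) is indeed the only delicate point, and your approximation sketch is the standard way to justify membership in $W^{1,r}_0(\Omega)$ (it is exactly what the cited reference \cite{vs} provides), so no gap remains.
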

\begin{proof} 
Let $u\in W^{1,\mathcal H}_0(\Omega)$ with $u\geq 0$ a.e.\ be given. Then,
since $u\in W^{1,p}_0(\Omega)$ and $u\in W^{1,q}_0(\Omega)$, 
by Proposition 2.3 of \cite{vs}, we have
$$
\varrho_\mathcal H(\nabla u^H)=\int_{\Omega}(|\nabla u^H|^p+|\nabla u^H|^q)dx=\int_\Omega(|\nabla u|^p+|\nabla u|^q)dx=\varrho_\mathcal H(\nabla u),$$
so $u^H\in W^{1,\mathcal H}_0(\Omega)$ and  by the unit ball property
$$
\|\nabla u^H\|_{\mathcal H}=\|\nabla u\|_\mathcal H.
$$
Analogously, we have  $\| u^H\|_{\mathcal H}=\|u\|_\mathcal H.$
We want to apply the symmetric Ekeland Variational 
Principle with constraint (see Section 2.4, p. 334 of \cite{sq}, see also \cite{sq-ekeland}) by choosing
$$
X:=W^{1,\mathcal H}_0(\Omega),\quad
S:=W^{1,\mathcal H}_0(\Omega,\R^+),
\quad V:=L^p(\Omega),
\quad f(u):=\|\nabla u\|_\mathcal H, \,\,\,\, u\in  X.
$$
Let $(v_h)\subset \mathcal M$ be a nonnegative
minimization sequence, namely
$$
\lim_{h\to\infty}\|\nabla v_h\|_\mathcal H=\lambda_\mathcal H^{1}.
$$
Then, there exists a new 
minimization sequence $(\tilde v_h)\subset \mathcal M$
such that
\begin{equation}
\label{symmetr}
\||\tilde v_h|^H-\tilde v_h\|_{p}\to 0,\quad\text{as $h\to\infty$}.
\end{equation}
Up to a subsequence $(\tilde v_{h_j})$ converges weakly to $u$ in $W^{1,\mathcal H}_0(\Omega)$ and, in light of Proposition~\ref{sob-poinc}-$(iii)$, we obtain $\|\tilde v_{h_j}-u\|_\mathcal H\to 0$ as $j\to\infty$ (and hence
$\|\tilde v_{h_j}-u\|_p\to 0$ as $j\to\infty$) so that $\|u\|_\mathcal H=1$. This easily implies that $u$ is a minimizer of \eqref{R}. Finally, observing that
(standard contractivity of the polarization in the $L^p$-norm)
$$
\||\tilde v_{h_j}|^H-|u|^H\|_{p}\leq 
\||\tilde v_{h_j}|-|u|\|_{p}\leq \|\tilde v_{h_j}-u\|_{p}\to 0,\quad\text{as $h\to\infty$},
$$
which, taking into account \eqref{symmetr}, yields
$$
\||u|^H-u\|_{p}\leq \||\tilde v_{h_j}|^H-|u|^H\|_{p}+
\||\tilde v_{h_j}|^H-\tilde v_{h_j}\|_{p}+\|\tilde v_{h_j}-u\|_{p}\to 0,\quad\text{as $h\to\infty$},
$$
which yields $|u|^H=u$. Hence $u\geq 0$ and $u^H=u$, concluding the proof.
\end{proof}

\subsection{Large exponents}
\noindent The next result concerns the behavior of the first eigenvalue $\lambda^1_\mathcal H$, when the exponents $p$ and $q$ of the $N$-function $\mathcal H$ are replaced by $hp$ and $hq$, respectively, and $h$ goes to infinity. The passage to infinity was first studied in \cite{JLM} for the $p$-Laplacian operator and then in \cite{Franzi} for the $p(x)$-Laplacian. 

\noindent Clearly, in order to study the $\infty$-eigenvalue problem, we do not require any bound from above on the exponents $hq$, we only assume that $1<p<q$.
Furthermore, throughout this subsection, we use the rescaled modular 
$$\tilde{\varrho}_\mathcal H(u):=\frac1{|\Omega|+\|a\|_1}\int_\Omega (|u|^p+a(x)|u|^q)dx$$
and we denote by $\||\cdot\||_\mathcal H$ the corresponding norm. It is easy to see that $\||\cdot\||_\mathcal H$ is equivalent to $\|\cdot\|_\mathcal H$, more precisely, by (2.1.5) of \cite{Base} and by the unit ball property,
\begin{equation}\label{equivalence}\begin{aligned}
\||u\||_\mathcal H\le \|u\|_\mathcal H\le (|\Omega|+\|a\|_1)\||u\||_\mathcal H, \quad&\mbox{if }|\Omega|+\|a\|_1\ge1,\\
(|\Omega|+\|a\|_1)\||u\||_\mathcal H\le\|u\|_\mathcal H\le \||u\||_\mathcal H, \quad&\mbox{if }|\Omega|+\|a\|_1<1.
\end{aligned}\end{equation}
 
\smallskip

\noindent
We introduce the distance function 
$$
\delta(x):=\mathrm{dist}(x,\partial\Omega),\quad \text{for all $x\in\Omega$}. 
$$
We recall that $\delta$ is Lipschitz continuous and that $\nabla \delta=1$ a.e. in $\Omega$. We define
\begin{equation}\label{lambdainfty}\lambda^1_\infty:=\inf_{u\in W^{1,\infty}_0(\Omega)\setminus\{0\}}\frac{\|\nabla u\|_\infty}{\|u\|_\infty}\end{equation}
Proceeding as in Section 4 of \cite{Franzi} it is easy to see that the minimum in \eqref{lambdainfty} is reached on the distance function and so 
$$\lambda^1_\infty=\frac1{\|\delta\|_\infty}=\frac1R,$$
where $R$ is the so-called {\it inradius}, i.e. the radius of the largest ball inscribed in $\Omega$. 

\noindent
For all $h\in\mathbb N$, put 
$$
(h\mathcal H)(x,t):=t^{hp}+a(x)t^{hq},\quad\text{for all $(x,t)\in\Omega\times[0,\infty)$}.
$$

\begin{lemma}\label{inftynorm} Let $u\in L^\infty(\Omega)$ then 
$$\lim_{h\to\infty}\||u\||_{h\mathcal H}=\|u\|_\infty.$$
\end{lemma}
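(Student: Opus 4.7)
The plan is to obtain matching bounds on the $\limsup$ and $\liminf$ of $\||u\||_{h\mathcal H}$ via the unit ball property (Proposition~\ref{properties}-$(ii)$), applied to the rescaled modular $\tilde{\varrho}_{h\mathcal H}$, which is itself the modular of the $N$-function $(h\mathcal H)/(|\Omega|+\|a\|_1)\in\Phi(\Omega)$. Setting $M:=\|u\|_\infty$, the case $M=0$ is trivial (both sides vanish), so assume $M>0$. For the \emph{upper bound}, I would test the modular at $\gamma=M$: the inequality $|u|/M\le 1$ a.e.\ yields $(|u|/M)^{hp},\,(|u|/M)^{hq}\le 1$, hence
\[
\tilde{\varrho}_{h\mathcal H}(u/M)\le\frac{\int_\Omega(1+a(x))\,dx}{|\Omega|+\|a\|_1}=1.
\]
The unit ball property then gives $\||u\||_{h\mathcal H}\le M$ for every $h$, so $\limsup_{h\to\infty}\||u\||_{h\mathcal H}\le\|u\|_\infty$.

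For the \emph{lower bound}, I would fix an arbitrary $\gamma\in(0,M)$ and force the modular at $u/\gamma$ to blow up. The key step is to choose the superlevel set $E:=\{x\in\Omega:|u(x)|>(\gamma+M)/2\}$, which has strictly positive Lebesgue measure by the very definition of the essential supremum, and on which $|u|/\gamma\ge\theta$ with $\theta:=(\gamma+M)/(2\gamma)>1$. Discarding the contribution from $\Omega\setminus E$ and the $a(x)|u|^{hq}$-piece,
\[
\tilde{\varrho}_{h\mathcal H}(u/\gamma)\ge\frac{|E|}{|\Omega|+\|a\|_1}\,\theta^{hp}\longrightarrow+\infty\quad\text{as }h\to\infty.
\]
Thus for $h$ large enough $\tilde{\varrho}_{h\mathcal H}(u/\gamma)>1$, and the unit ball property yields $\||u\||_{h\mathcal H}>\gamma$. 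This gives $\liminf_{h\to\infty}\||u\||_{h\mathcal H}\ge\gamma$, and letting $\gamma\nearrow M$ concludes.

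The only subtlety---hardly an obstacle---is the bookkeeping needed to legitimately apply Proposition~\ref{properties}-$(ii)$ to the rescaled modular $\tilde{\varrho}_{h\mathcal H}$ paired with the norm $\||\cdot\||_{h\mathcal H}$; this is valid because multiplying a generalized $\Phi$-function by the positive constant $(|\Omega|+\|a\|_1)^{-1}$ preserves its $\Phi$-structure and still corresponds to a Luxemburg-type norm. No hypothesis on $u$ beyond $u\in L^\infty(\Omega)$ is used, and the argument is manifestly uniform in the shape of $a$.
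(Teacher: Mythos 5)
Your proof is correct and follows essentially the same route as the paper's: both bounds come from the unit ball property for the rescaled modular $\tilde{\varrho}_{h\mathcal H}$, with the upper bound from $|u|\le\|u\|_\infty$ a.e.\ and the lower bound from a positive-measure superlevel set of $|u|$. The only (immaterial) difference is that you evaluate the modular at the fixed constants $M$ and $\gamma$ and let it blow up, whereas the paper evaluates it at the norm $\||u\||_{h\mathcal H}$ itself, takes $hp$-th roots, and uses $(|A_\varepsilon|/(|\Omega|+\|a\|_1))^{1/(hp)}\to 1$.
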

\begin{proof} First, we want to show that 
\begin{equation}\label{limsupinfty}\limsup_{h\to\infty}\||u\||_{h\mathcal H}\le\|u\|_\infty.\end{equation} 
To this aim, it is enough to consider only those indices $h$ for which $\||u\||_{h\mathcal H}>\|u\|_\infty$,
$$\begin{aligned}1&=\left[\tilde{\varrho}_{h\mathcal H}\left(\frac{u}{\||u\||_{h\mathcal H}}\right)\right]^{\frac1{hp}}=\left[\int_\Omega\left(\left|\frac{u}{\||u\||_{h\mathcal H}}\right|^{hp}+a(x)\left|\frac{u}{\||u\||_{h\mathcal H}}\right|^{hq}\right)\frac1{|\Omega|+\|a\|_1}dx\right]^{\frac1{hp}}\\
&\le \left[\int_\Omega \left(\frac{\|u\|_\infty}{\||u\||_{h\mathcal H}}\right)^{hp}\frac{1+a(x)}{|\Omega|+\|a\|_1}dx\right]^{\frac1{hp}} = \frac{\|u\|_\infty}{\||u\||_{h\mathcal H}}. 
\end{aligned}$$
This implies \eqref{limsupinfty}. Now, in order to prove 
$$\liminf_{h\to\infty}\||u\||_{h\mathcal H}\ge\|u\|_\infty,$$
we assume that $\|u\|_\infty>0$ (the other case is obvious). Then, given $\varepsilon>0$, we can find a set $A_\varepsilon\subset\Omega$, with $|A_\varepsilon|>0$, such that $|u(x)|>\|u\|_\infty-\varepsilon$ for all $x\in A_\varepsilon$. We consider only those indices $h$ for which $\||u\||_{h\mathcal H}\ge\|u\|_\infty-\varepsilon$ and we have 
$$\begin{aligned}1&=\left[\tilde{\varrho}_{h\mathcal H}\left(\frac{u}{\||u\||_{h\mathcal H}}\right)\right]^{\frac1{hp}}\ge\left[\int_{A_\varepsilon}\left(\left|\frac{u}{\||u\||_{h\mathcal H}}\right|^{hp}+a(x)\left|\frac{u}{\||u\||_{h\mathcal H}}\right|^{hq}\right)\frac1{|\Omega|+\|a\|_1}dx\right]^{\frac1{hp}}\\
&>\left[\int_{A_\varepsilon}\left(\frac{\|u\|_\infty-\varepsilon}{\||u\||_{h\mathcal H}}\right)^{hp}\frac1{|\Omega|+\|a\|_1}dx\right]^{\frac1{hp}}=\left(\frac{|A_\varepsilon|}{|\Omega|+\|a\|_1}\right)^{\frac1{hp}}\frac{\|u\|_\infty-\varepsilon}{\||u\||_{h\mathcal H}},\end{aligned}$$
which gives $\liminf_{h\to\infty}\||u\||_{h\mathcal H}\ge\|u\|_\infty-\varepsilon$ and by the arbitrariness of $\varepsilon$ we conclude.
\end{proof}

\noindent We remark that the same property stated in Lemma \ref{inftynorm} holds if we endow the space $L^\mathcal H(\Omega)$ with the standard modular $\varrho_\mathcal H$ and the corresponding norm $\|\cdot\|_\mathcal H$. Furthermore, if we consider the norm in $L^r(\Omega)$ $$\||u\||_r:= \frac{\|u\|_r}{(|\Omega|+\|a\|_1)^{1/r}},$$
the classical result $\lim_{r\to\infty}\||u\||_r=\|u\|_\infty$ continues to hold.  

\begin{theorem}\label{largeeigen} 
	There holds
	$$
	\lim_{h\to\infty}\tilde{\lambda}^1_{h\mathcal H}=\lambda^1_\infty,
	$$
	where $$\tilde{\lambda}^1_{h\mathcal H}:=\inf_{u\in W^{1,\mathcal H}_0(\Omega)\setminus\{0\}}\frac{\||\nabla u\||_\mathcal H}{\||u\||_\mathcal H}.$$
\end{theorem}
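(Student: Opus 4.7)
The plan is the classical double-inequality argument used by Juutinen--Lindqvist--Manfredi for the $p$-Laplacian and adapted in \cite{Franzi} for the $p(x)$-Laplacian.

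\textbf{Upper bound.} First I would prove $\limsup_{h\to\infty}\tilde{\lambda}^1_{h\mathcal H}\le\lambda^1_\infty=1/R$ by using the distance function $\delta(x)=\mathrm{dist}(x,\partial\Omega)$ as a test function. Since $\delta$ is Lipschitz with $\delta=0$ on $\partial\Omega$, it belongs to $W^{1,\infty}_0(\Omega)$, and since $a\in L^\infty(\Omega)$ and $\Omega$ is bounded, $W^{1,\infty}_0(\Omega)\hookrightarrow W^{1,\mathcal H}_0(\Omega)$; if any regularization is needed, one can replace $\delta$ by a smooth compactly supported approximation and pass to the limit at the end. Applying Lemma \ref{inftynorm} simultaneously to $\delta$ and to $|\nabla\delta|\equiv 1$ a.e.\ in $\Omega$ (the analogous statement for the gradient) yields
\[
\tilde{\lambda}^1_{h\mathcal H}\le\frac{\||\nabla\delta\||_{h\mathcal H}}{\||\delta\||_{h\mathcal H}}\xrightarrow[h\to\infty]{}\frac{1}{\|\delta\|_\infty}=\frac{1}{R}=\lambda^1_\infty.
\]

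\textbf{Lower bound via compactness.} For each $h$ let $u_h$ be a (nonnegative) minimizer, so that $\||u_h\||_{h\mathcal H}=1$ and $\||\nabla u_h\||_{h\mathcal H}=\tilde{\lambda}^1_{h\mathcal H}$. Set $\Lambda:=\liminf_{h\to\infty}\tilde{\lambda}^1_{h\mathcal H}$, which by the upper bound is finite. The unit ball property gives $\tilde{\varrho}_{h\mathcal H}(\nabla u_h/\tilde{\lambda}^1_{h\mathcal H})\le 1$, i.e.
\[
\int_\Omega|\nabla u_h|^{hp}\,dx\le (|\Omega|+\|a\|_1)(\tilde{\lambda}^1_{h\mathcal H})^{hp},\qquad \int_\Omega|u_h|^{hp}\,dx\le |\Omega|+\|a\|_1,
\]
and Jensen's/H\"older's inequality then yields, for any fixed $r>1$ and $h$ large enough so that $hp>r$,
\[
\|\nabla u_h\|_r\le |\Omega|^{1/r-1/(hp)}(|\Omega|+\|a\|_1)^{1/(hp)}\,\tilde{\lambda}^1_{h\mathcal H},\qquad \|u_h\|_r\le |\Omega|^{1/r-1/(hp)}(|\Omega|+\|a\|_1)^{1/(hp)}.
\]
Hence $(u_h)$ is bounded in $W^{1,r}_0(\Omega)$ for every $r>1$. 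A diagonal extraction, together with the compact embedding $W^{1,r}_0(\Omega)\hookrightarrow\hookrightarrow C(\overline{\Omega})$ for $r>n$, produces a subsequence (not relabeled) such that $u_h\rightharpoonup u$ weakly in $W^{1,r}_0(\Omega)$ for all $r>1$ and $u_h\to u$ uniformly on $\overline{\Omega}$.

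\textbf{Identification of the limit.} Passing to $\liminf$ in the inequalities above yields $\|u\|_r\le|\Omega|^{1/r}$ and $\|\nabla u\|_r\le|\Omega|^{1/r}\Lambda$ for every $r>1$, and letting $r\to\infty$ gives $\|u\|_\infty\le 1$ and $\|\nabla u\|_\infty\le\Lambda$, so $u\in W^{1,\infty}_0(\Omega)$. To show $u\not\equiv 0$, assume for contradiction that $\|u\|_\infty<1$; then by uniform convergence $\|u_h\|_\infty\le 1-\varepsilon$ for some $\varepsilon>0$ and all $h$ large, which forces
\[
\tilde{\varrho}_{h\mathcal H}(u_h)\le\frac{(1-\varepsilon)^{hp}|\Omega|+(1-\varepsilon)^{hq}\|a\|_1}{|\Omega|+\|a\|_1}\xrightarrow[h\to\infty]{}0,
\]
contradicting $\tilde{\varrho}_{h\mathcal H}(u_h)=1$. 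Hence $\|u\|_\infty=1$, and consequently
\[
\lambda^1_\infty\le\frac{\|\nabla u\|_\infty}{\|u\|_\infty}\le\Lambda=\liminf_{h\to\infty}\tilde{\lambda}^1_{h\mathcal H},
\]
which together with the upper bound concludes the proof.

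The main technical point I expect to be delicate is the nontriviality of the limit $u$: the modular constraint $\tilde{\varrho}_{h\mathcal H}(u_h)=1$ does not transfer directly to an $L^\infty$-estimate from below on $u$ through Lemma \ref{inftynorm} (which applies to a fixed function, not to a varying sequence), so the contradiction argument via uniform convergence is essential. The compact embedding into $C(\overline{\Omega})$, available thanks to the $W^{1,r}$-bounds uniform in $h$, is what makes this argument go through.
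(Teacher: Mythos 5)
Your proposal follows essentially the same route as the paper: the upper bound by testing with the distance function $\delta$ together with Lemma \ref{inftynorm}, and the lower bound by uniform $W^{1,r}_0(\Omega)$ bounds on the normalized minimizers for every $r$, compactness, and passage to the limit $r\to\infty$. The only variations are cosmetic --- the paper normalizes the limit via the inequality $1=\||u_{h_j}\||_{h_j\mathcal H}\le\|u_{h_j}\|_\infty$ and works with the ratios $\||\nabla u_\infty\||_r/\||u_\infty\||_r$, whereas you pin down $\|u\|_\infty=1$ by a modular contradiction using uniform convergence --- and the one detail to tidy is that your extraction should be performed along a subsequence realizing $\liminf_{h}\tilde{\lambda}^1_{h\mathcal H}$ (or, as the paper does, starting from an arbitrary subsequence and concluding by its arbitrariness).
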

\begin{proof} 
	By the definition of $\tilde{\lambda}^1_{h\mathcal H}$, using $\delta$ as test function, we have
	$$\tilde{\lambda}^1_{h\mathcal H}\le\frac{\||\nabla \delta\||_{h\mathcal H}}{\||\delta\||_{h\mathcal H}}\quad\mbox{for all }h\in\mathbb N.$$
	Thus, passing to the limit superior and taking into account Lemma \ref{inftynorm}, we obtain
	\begin{equation}\label{suphmathcalH}\limsup_{h\to\infty}\tilde{\lambda}^1_{h\mathcal H}\le\frac{\|\nabla \delta\|_\infty}{\|\delta\|_\infty}=\lambda^1_\infty.\end{equation}
	Let $(u_h)$ be the sequence of first eigenfunctions corresponding to $\tilde{\lambda}^1_{h\mathcal H}$, with $\||u_h\||_{h\mathcal H}=1$ for all $h$.
	Pick any subsequence $(u_{h_j})$ of $(u_{h})$. Then, $\tilde{\lambda}^1_{h_j\mathcal H}=\||\nabla u_{h_j}\||_{h_j\mathcal H}$ and, by \eqref{suphmathcalH}, the sequence $(\||\nabla u_{h_j}\||_{h\mathcal H})$ is bounded. Therefore, in correspondence to any $r\in[1,\infty)$ there is an integer $j_r$ such that $h_jp\ge r$ for all $j\ge j_r$, and consequently $$W^{1,h_j\mathcal H}_0(\Omega)\hookrightarrow W^{1,r}_0(\Omega)\hookrightarrow\hookrightarrow L^r(\Omega)\qquad\mbox{for all }j\ge j_r.$$ Hence, $(u_{h_j})$ is definitely bounded in the reflexive Banach space $W^{1,r}_0(\Omega)$ and we can extract a subsequence, still denoted by $(u_{h_j})$, for which 
	$$\nabla u_{h_j}\rightharpoonup\nabla u_\infty\quad\mbox{and}\quad u_{h_j}\to u_\infty\quad\mbox{in }L^r(\Omega).$$
	By the arbitrariness of $r$ and the fact that $\Omega$ is bounded, we get $u_\infty\in W^{1,\infty}_0(\Omega)$. In particular, $u_{h_j}\to u_\infty$ also in $L^\infty(\Omega)$, since, by Proposition \ref{embeddingW}-$(ii)$, $u_{h_j}\in L^\infty(\Omega)$ for $j$ large.
	By H\"older's inequality, for all $u\in L^r(\Omega)$
	$$
	\tilde{\varrho}_r(u)=\int_\Omega \frac{|u|^r}{|\Omega|+\|a\|_1}dx\le\left(\int_\Omega\frac{|u|^{h_jp}}{|\Omega|+\|a\|_1}dx\right)^{\frac{r}{h_jp}}\left(\int_\Omega\frac{1+a(x)}{|\Omega|+\|a\|_1}dx\right)^{\frac{h_jp-r}{h_jp}}\le (\tilde{\varrho}_{h_j\mathcal H}(u))^{\frac{r}{h_jp}},
	$$
	whence
	\begin{equation}\label{holdrhp}\||u\||_r\le\||u\||_{h_j\mathcal H}\quad\mbox{for all }j\ge j_r,\end{equation}
	by the unit ball property. 
	We know that $u_{h_j}\in L^\infty(\Omega)$ for $j$ large, so 
	$$\tilde{\varrho}_{h_j\mathcal H}\left(\frac{u_{h_j}}{\|u_{h_j}\|_\infty}\right)\le\int_\Omega\left(\left\|\frac{u_{h_j}}{\|u_{h_j}\|_\infty}\right\|_\infty^{h_jp}+a(x)\left\|\frac{u_{h_j}}{\|u_{h_j}\|_\infty}\right\|_\infty^{h_jq}\right)\frac1{|\Omega|+\|a\|_1}dx=1.$$
	By the unit ball property, we obtain
	\begin{equation}\label{uff}1=\||u_{h_j}\||_{h_j\mathcal H}\le\|u_{h_j}\|_\infty.\end{equation}
	By the weak lower semicontinuity of the $W^{1,r}_0(\Omega)$-norm and by virtue of \eqref{holdrhp} and \eqref{uff},
	$$\begin{aligned}\frac{\||\nabla u_\infty\||_r}{\||u_\infty\||_r}&\le\liminf_{j\to\infty}\frac{\||\nabla u_{h_j}\||_r}{\||u_{h_j}\||_r}\le\liminf_{j\to\infty}\frac{\||\nabla u_{h_j}\||_{h_j\mathcal H}}{\||u_{h_j}\||_r}\\
	&\le\liminf_{j\to\infty}\left(\||\nabla u_{h_j}\||_{h_j\mathcal H}\frac{\|u_{h_j}\|_\infty}{\||u_{h_j}\||_r}\right)=\frac{\|u_\infty\|_\infty}{\||u_\infty\||_r}\liminf_{j\to\infty}\tilde{\lambda}^1_{h_j\mathcal H}\end{aligned}$$
	Hence, 
	$$\lambda^1_\infty\le\frac{\|\nabla u_\infty\|_\infty}{\|u_\infty\|_\infty}=\lim_{r\to\infty}\frac{\||\nabla u_\infty\||_r}{\||u_\infty\||_r}\le\lim_{r\to\infty}\frac{\|u_\infty\|_\infty}{\||u_\infty\||_r}\liminf_{j\to\infty}\tilde{\lambda}^1_{h_j\mathcal H}=\liminf_{j\to\infty}\tilde{\lambda}^1_{h_j\mathcal H},$$
	which, combined with \eqref{suphmathcalH}, gives 
	$$\lambda^1_\infty=\lim_{j\to\infty}\tilde{\lambda}^1_{h_j\mathcal H}.$$
	Finally, we conclude the proof by the arbitrariness of the subsequence. 
\end{proof}

\begin{remark}{\rm By \eqref{equivalence}, in terms of the first eigenvalues $\lambda^1_{h\mathcal H}$, Theorem \ref{largeeigen} gives
$$
\begin{aligned}&\frac{1}{|\Omega|+\|a\|_1}\lambda^1_\infty\le \liminf_{h\to\infty}\lambda^1_{h\mathcal H}\le \limsup_{h\to\infty}\lambda^1_{h\mathcal H}\le(|\Omega|+\|a\|_1)\lambda^1_\infty, \quad\mbox{if }|\Omega|+\|a\|_1\ge1,\\
&(|\Omega|+\|a\|_1)\lambda^1_\infty\le\liminf_{h\to\infty}\lambda^1_{h\mathcal H}\le\limsup_{h\to\infty}\lambda^1_{h\mathcal H}\le\frac1{|\Omega|+\|a\|_1}\lambda^1_\infty, \quad\mbox{if }|\Omega|+\|a\|_1<1.
\end{aligned}
$$
}
\end{remark}
\subsection{Closedness of the spectrum}

\begin{theorem}[Closedness of $\Lambda$]\label{closed} 
	Assume that \eqref{cond-pq} holds, then the spectrum is a closed set.
\end{theorem}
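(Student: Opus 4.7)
The plan is a standard closedness argument via weak-to-strong upgrading, relying on the compact Sobolev embedding provided by Proposition \ref{sob-poinc}-$(iii)$ and the monotonicity of the double phase operator. Let $(\lambda_h)\subset\Lambda$ with $\lambda_h\to\lambda$. For each $h$ pick an eigenfunction of \eqref{EL}; by homogeneity we may normalize it to lie in $\mathcal M$ and still call it $u_h$, so that $k(u_h)=1$ and, by \eqref{legame-lambda}, $K(u_h)=\|\nabla u_h\|_{\mathcal H}=\lambda_h$. In particular $\lambda_h\ge \lambda^1_{\mathcal H}>0$ by Theorem \ref{main1}, so $\lambda>0$, and the sequence $(u_h)$ is bounded in the reflexive Banach space $W^{1,\mathcal H}_0(\Omega)$ (Proposition \ref{reflexivity}). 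Up to a subsequence, $u_h\rightharpoonup u$ weakly in $W^{1,\mathcal H}_0(\Omega)$ and, by Proposition \ref{sob-poinc}-$(iii)$, $u_h\to u$ strongly in $L^{\mathcal H}(\Omega)$. Thus $\|u\|_{\mathcal H}=1$, so $u\in\mathcal M$ and in particular $u\neq 0$.

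The core step is to upgrade weak convergence of the gradients to strong convergence in $L^{\mathcal H}(\Omega)$. I would test the weak formulation \eqref{weak} for $u_h$ against $v=u_h-u$, which is admissible by the density remark following Definition \ref{eigen}. Since $u_h\in\mathcal M$, the equation reads
\[
\tfrac{p}{\lambda_h^{p-1}}\!\!\int_\Omega|\nabla u_h|^{p-2}\nabla u_h\cdot\nabla(u_h-u)\,dx+\tfrac{q}{\lambda_h^{q-1}}\!\!\int_\Omega a(x)|\nabla u_h|^{q-2}\nabla u_h\cdot\nabla(u_h-u)\,dx=\mathrm{RHS}_h,
\]
where $\mathrm{RHS}_h=\lambda_h S(u_h)\int_\Omega(p|u_h|^{p-2}+qa(x)|u_h|^{q-2})u_h(u_h-u)\,dx$. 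Note that $S(u_h)\in[p/q,q/p]$ is bounded (its numerator and denominator are each a convex combination of $p$ and $q$, by the unit ball property applied to $u_h$ and $\nabla u_h/\lambda_h$), and $\lambda_h$ is bounded. Using Hölder's inequality together with the strong convergence $u_h\to u$ in $L^p(\Omega)\cap L^q_a(\Omega)$ (which follows from Proposition \ref{embeddingW}-$(iv)$ applied to the strong convergence in $L^{\mathcal H}$), we obtain $\mathrm{RHS}_h\to 0$. On the other hand, since $\nabla u_h\rightharpoonup \nabla u$ in $L^p(\Omega)$ and in $L^q_a(\Omega;\mathbb R^n)$ and $\lambda_h\to\lambda>0$, subtracting from both sides the quantity with $|\nabla u|^{p-2}\nabla u$ and $|\nabla u|^{q-2}\nabla u$ in place of $|\nabla u_h|^{p-2}\nabla u_h$ and $|\nabla u_h|^{q-2}\nabla u_h$ (which tends to $0$ by weak convergence) gives
\[
\lim_{h\to\infty}\Big[\tfrac{p}{\lambda_h^{p-1}}\,\mathcal I^{(p)}_h+\tfrac{q}{\lambda_h^{q-1}}\,\mathcal I^{(q)}_h\Big]=0,
\]
where $\mathcal I^{(r)}_h:=\int_\Omega w_r(x)(|\nabla u_h|^{r-2}\nabla u_h-|\nabla u|^{r-2}\nabla u)\cdot(\nabla u_h-\nabla u)\,dx$ with $w_p\equiv 1$ and $w_q=a$. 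Both integrals are non-negative by the monotonicity of $\xi\mapsto|\xi|^{r-2}\xi$, so each vanishes in the limit. Standard pointwise vector inequalities for the $p$-Laplacian (see, e.g., Simon's inequality) then yield $\nabla u_h\to\nabla u$ in $L^p(\Omega)$ and $a^{1/q}\nabla u_h\to a^{1/q}\nabla u$ in $L^q(\Omega)$, hence $\nabla u_h\to \nabla u$ in $L^{\mathcal H}(\Omega)$, so in particular $K(u_h)\to K(u)=\lambda$.

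With strong convergence of both $u_h$ and $\nabla u_h$ in the appropriate Lebesgue spaces in hand, continuity of $K$, $k$ and $S$ (argued as in Proposition \ref{kC1}) gives $S(u_h)\to S(u)$. Hence one can pass to the limit in \eqref{weak} for $u_h$ against an arbitrary test $v\in W^{1,\mathcal H}_0(\Omega)$: the left-hand side converges by strong $L^{\mathcal H}$-convergence of the gradients (so the nonlinear integrands converge in $L^1(\Omega)$ through a dominated convergence / Vitali argument based on Lemma \ref{4domconv}), while the right-hand side converges by the same reasoning on $u$ itself. The limit identity shows that $u\in\mathcal M$ solves \eqref{weak} with eigenvalue $\lambda$, so $\lambda\in\Lambda$, completing the proof. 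The main technical obstacle is the gradient-convergence upgrade: one must isolate the monotone pieces correctly to exploit the $(S_+)$-type property of the double phase operator, and check that the $p$- and $q$-contributions are separately non-negative so that their vanishing in sum forces each to vanish.
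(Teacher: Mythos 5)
Your proposal is correct and follows essentially the same route as the paper: normalize the eigenfunctions to $\mathcal M$, extract a weakly convergent subsequence, use the compact embedding of Proposition \ref{sob-poinc}-$(iii)$ to get $u\in\mathcal M$, test the equation with the difference of the (rescaled) gradients, exploit the separate non-negativity of the $p$- and $q$-monotone pieces together with the standard vector inequalities to upgrade to strong $L^{\mathcal H}$-convergence of the gradients, and then pass to the limit in the weak formulation (the paper uses $v=u_{h_j}/\lambda_{h_j}-u/\lambda$ rather than $v=u_h-u$, a purely cosmetic difference since $\lambda_h\to\lambda>0$). No gaps.
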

\begin{proof} Let $(\lambda_h)\subset\Lambda$ be a sequence of eigenvalues of \eqref{EL} 
	converging to a certain $\lambda<\infty$. 
	Let us denote by $(u_h)$ the sequence of the corresponding eigenfunctions such that $\|u_h\|_\mathcal H=1$ for all $h$. Then,
	we have 
	\begin{equation}
	\label{DI}
	\begin{aligned}\int_\Omega&\left[p\left(\frac{|\nabla u_h|}{\lambda_h}\right)^{p-2}+qa(x)\left(\frac{|\nabla u_h|}{\lambda_h}\right)^{q-2}\right]\frac{\nabla u_h}{\lambda_h}\cdot\nabla v \,dx\\
	&\,=\lambda_hS(u_h)\int_\Omega\left(p|u_h|^{p-2}+qa(x)|u_h|^{q-2}\right)u_hvdx\quad\mbox{for all }v\in W^{1,\mathcal H}_0(\Omega)\mbox{ and }h\ge1,
	\end{aligned}
	\end{equation}
	and, by normalization, $\lambda_h=\|\nabla u_h\|_\mathcal H$. Therefore 
	$(u_h)$ is bounded in the reflexive Banach space $W^{1,\mathcal H}_0(\Omega)$ and it
	admits a subsequence $(u_{h_j})$ such that $u_{h_j}\rightharpoonup u$ in $W^{1,\mathcal H}_0(\Omega)$ as $j\to\infty$. Thus, by Proposition~\ref{sob-poinc}-$(iii)$, $u_{h_j}\to u$ in $L^\mathcal H(\Omega)$ strongly as $j\to\infty$, yielding $\|u\|_\mathcal H=1$ 
	(which, in particular, provides $u\neq 0$). 
	We claim that $u$ is an eigenfunction with corresponding eigenvalue $\lambda$, i.e. that the following distributional 
	identity is satisfied for all $v\in C^\infty_0(\Omega)$,
	$$
	\int_\Omega\left[p\left(\frac{|\nabla u|}{\lambda}\right)^{p-2}\!\!+qa(x)\left(\frac{|\nabla u|}{\lambda}\right)^{q-2}\right]\frac{\nabla u}{\lambda}\cdot\nabla v dx=\lambda S(u)\!\!\int_\Omega\!\!\left(p|u|^{p-2}+qa(x)|u|^{q-2}\right)uvdx.
	$$
	In order to prove that, we shall pass to the limit in \eqref{DI}. First, being $L^\mathcal H(\Omega)\hookrightarrow L^p(\Omega)$, by the dominated convergence theorem,
	\begin{equation}
	\label{pezzop}
	\int_\Omega |u_{h_j}|^{p-2}u_{h_j}v\,dx\to \int_\Omega |u|^{p-2}u v\,dx
	\quad\mbox{for all }v\in C^\infty_0(\Omega)
	\end{equation}
	up to a subsequence. Moreover, being $L^\mathcal H(\Omega)\hookrightarrow L^q_a(\Omega)$, $a^{1/q}u_{h_j}\to a^{1/q}u$ in $L^q(\Omega)$ and so, up to a subsequence, there exists $\omega\in L^q(\Omega)$ such that $|a^{1/q}u_{h_j}|\le \omega$ for all $j$. Therefore, $$a|u_{h_j}|^{q-1}|v|=a^{(q-1)/q}|u_{h_j}|^{q-1}a^{1/q}|v|\le\omega^{q-1}a^{1/q}|v|\in L^1(\Omega)\quad\mbox{for all }j,$$
	and so, the dominated convergence theorem implies
	\begin{equation}\label{pezzoq}\int_\Omega a(x)|u_{h_j}|^{q-2}u_{h_j}vdx\to\int_\Omega a(x)|u|^{q-2}uvdx\quad\mbox{for all }v\in C^\infty_0(\Omega).\end{equation}
	Therefore, by \eqref{pezzop}-\eqref{pezzoq}, there exists a subsequence, still denoted by $(u_{h_j})$, for which 
	the following limit holds, for all $v\in C^\infty_0(\Omega)$,
	\begin{equation}
	\label{RHS}
	\lim_{j\to\infty}\int_\Omega\left(p|u_{h_j}|^{p-2}+qa(x)|u_{h_j}|^{q-2}\right)u_{h_j}vdx=\int_\Omega\left(p|u|^{p-2}+qa(x)|u|^{q-2}\right)uvdx.
	\end{equation}
	Now, by \eqref{DI} with $v=\dfrac{u_{h_j}}{\lambda_{h_j}}-\dfrac{u}{\lambda}$, we get 
	\begin{equation}
	\label{DIspecial}
	\begin{aligned}&\int_\Omega\left[\left(p\left|\frac{\nabla u_{h_j}}{\lambda_{h_j}}\right|^{p-2}+qa(x)\left|\frac{\nabla u_{h_j}}{\lambda_{h_j}}\right|^{q-2}\right)\frac{\nabla u_{h_j}}{\lambda_{h_j}}\right.\\
	&\qquad-\left.\left(p\left|\frac{\nabla u}{\lambda}\right|^{p-2}+qa(x)\left|\frac{\nabla u}{\lambda}\right|^{q-2}\right)\frac{\nabla u}\lambda\right]\cdot\nabla\left(\frac{u_{h_j}}{\lambda_{h_j}}-\frac{u}{\lambda}\right)dx\\
	&=\lambda_{h_j}S(u_{h_j})\int_\Omega(p|u_{h_j}|^{p-2}+qa(x)|u_{h_j}|^{q-2})u_{h_j}\left(\frac{u_{h_j}}{\lambda_{h_j}}-\frac{u}{\lambda}\right)dx\\
	&\qquad-\int_\Omega\left(p\left|\frac{\nabla u}{\lambda}\right|^{p-2}+qa(x)\left|\frac{\nabla u}{\lambda}\right|^{q-2}\right)\frac{\nabla u}{\lambda}\cdot\nabla\left(\frac{u_{h_j}}{\lambda_{h_j}}-\frac{u}{\lambda}\right)dx.
	\end{aligned}
	\end{equation}
	Passing to the limit under integral sign (we can reason as for \eqref{pezzop} and \eqref{pezzoq}), and observing that $S(u_{h_j})\le q$ for all $j$,
	$$\lim_{j\to\infty}\lambda_{h_j}S(u_{h_j})\int_\Omega(p|u_{h_j}|^{p-2}+qa(x)|u_{h_j}|^{q-2})u_{h_j}\left(\frac{u_{h_j}}{\lambda_{h_j}}-\frac{u}{\lambda}\right)dx=0.$$
	Furthermore, since $\dfrac{\nabla u_{h_j}}{\lambda_{h_j}}\rightharpoonup \dfrac{\nabla u}{\lambda}$ 
	in $[L^\mathcal H(\Omega)]^n$ and being $L^\mathcal H(\Omega)\hookrightarrow L^p(\Omega)$, we have
	$$
	\lim_{j\to\infty}\int_\Omega\left|\frac{\nabla u}{\lambda}\right|^{p-2}\frac{\nabla u}{\lambda}\cdot\nabla\left(\frac{u_{h_j}}{\lambda_{h_j}}-\frac{\nabla u}{\lambda}\right) dx= 0.
	$$
	Analogously, $L^\mathcal H(\Omega)\hookrightarrow L^q_a(\Omega)$ and for the dual spaces the reverse embedding holds. It is easy to see that the functional $$F:f\in [L^q_a(\Omega)]^n\mapsto\int_\Omega a(x)\left|\frac{\nabla u}{\lambda}\right|^{q-2}\frac{\nabla u}{\lambda}\cdot f dx$$
	belongs to $([L^q_a(\Omega)]^n)'\subset ([L^\mathcal H(\Omega)]^n)'$. Since in particular $\dfrac{\nabla u_{h_j}}{\lambda_{h_j}}\rightharpoonup \dfrac{\nabla u}{\lambda}$ in $[L^q_a(\Omega)]^n$, we get 
	$$\lim_{j\to\infty}\int_\Omega a(x)\left|\frac{\nabla u}{\lambda}\right|^{q-2}\frac{\nabla u}{\lambda}\cdot\nabla\left(\frac{u_{h_j}}{\lambda_{h_j}}-\frac{\nabla u}{\lambda}\right) dx= 0.$$
	Then, by \eqref{DIspecial},
	$$\begin{aligned}\lim_{j\to\infty}&\int_\Omega\left[\left(p\left|\frac{\nabla u_{h_j}}{\lambda_{h_j}}\right|^{p-2}+qa(x)\left|\frac{\nabla u_{h_j}}{\lambda_{h_j}}\right|^{q-2}\right)\frac{\nabla u_{h_j}}{\lambda_{h_j}}\right.\\
	&\qquad-\left.\left(p\left|\frac{\nabla u}{\lambda}\right|^{p-2}+qa(x)\left|\frac{\nabla u}{\lambda}\right|^{q-2}\right)\frac{\nabla u}\lambda\right]\cdot\nabla\left(\frac{u_{h_j}}{\lambda_{h_j}}-\frac{u}{\lambda}\right)dx=0,\end{aligned}$$
	and consequently
	\begin{equation}\label{duelim}\begin{aligned}\lim_{j\to\infty}&\int_\Omega p\left(\left|\frac{\nabla u_{h_j}}{\lambda_{h_j}}\right|^{p-2}\frac{\nabla u_{h_j}}{\lambda_{h_j}}-\left|\frac{\nabla u}{\lambda}\right|^{p-2}\frac{\nabla u}\lambda\right)\cdot\nabla\left(\frac{u_{h_j}}{\lambda_{h_j}}-\frac{u}{\lambda}\right)dx=0,\\
	\lim_{j\to\infty}&\int_\Omega qa(x)\left(\left|\frac{\nabla u_{h_j}}{\lambda_{h_j}}\right|^{q-2}\frac{\nabla u_{h_j}}{\lambda_{h_j}}-\left|\frac{\nabla u}{\lambda}\right|^{q-2}\frac{\nabla u}\lambda\right)\cdot\nabla\left(\frac{u_{h_j}}{\lambda_{h_j}}-\frac{u}{\lambda}\right)dx=0,\end{aligned}\end{equation}
	since, by convexity, the integrands are nonnegative. 
	Now we estimate the term 
	$$
	\mathcal R_q:=\int_\Omega qa(x)\left(\left|\frac{\nabla u_{h_j}}{\lambda_{h_j}}\right|^{q-2}\frac{\nabla u_{h_j}}{\lambda_{h_j}}-\left|\frac{\nabla u}{\lambda}\right|^{q-2}\frac{\nabla u}{\lambda}\right)\cdot\nabla\left(\frac{u_{h_j}}{\lambda_{h_j}}-\frac{u}\lambda\right)dx.
	$$
	We recall that the following inequalities hold for all $s,\,t\in\mathbb R^n$ (cf. inequalities (I) and (VII) of Section 10 of \cite{Lind_disug}),
	\begin{equation}\label{ge2}
	|s-t|^q\le 2^{q-2}(|s|^{q-2}s-|t|^{q-2}t)\cdot(s-t),\quad\mbox{if }q\ge2,
	\end{equation}
	\begin{equation}\label{<2}
	|s-t|^q\le(q-1)^{-\frac{q}2}\left[(|s|^{q-2}s-|t|^{q-2}t)\cdot(s-t)\right]^{\frac{q}{2}}(|s|^2+|t|^2)^{\frac{2-q}2\frac{q}2},\quad\mbox{if }1<q<2.
	\end{equation}
	Therefore, if $q\ge 2$, by \eqref{ge2}, 
	\begin{equation}\label{mag2}\int_\Omega qa(x)\left|\nabla\left(\frac{u_{h_j}}{\lambda_{h_j}}-\frac{u}\lambda\right)\right|^qdx\le 2^{q-2}\mathcal R_q\end{equation}
	If $1<q<2$, H\"older's inequality gives
	$$\begin{aligned}&\int_\Omega \left[qa(x)\left(\left|\frac{\nabla u_{h_j}}{\lambda_{h_j}}\right|^{q-2}\frac{\nabla u_{h_j}}{\lambda_{h_j}}-\left|\frac{\nabla u}{\lambda}\right|^{q-2}\frac{\nabla u}{\lambda}\right)\cdot\nabla\left(\frac{u_{h_j}}{\lambda_{h_j}}-\frac{u}\lambda\right)\right]^{\frac{q}2}\\
	&\hspace{5cm}\cdot\left[(qa(x))^{\frac2q}\left(\left|\frac{\nabla u_{h_j}}{\lambda_{h_j}}\right|^2+\left|\frac{\nabla u}\lambda\right|^2\right)\right]^{\frac{(2-q)q}4}dx\\
	&\le \mathcal R^{\frac{q}2}_q\left[\int_\Omega qa(x)\left(\left|\frac{\nabla u_{h_j}}{\lambda_{h_j}}\right|^2+\left|\frac{\nabla u}\lambda\right|^2\right)^{\frac{q}2}dx\right]^{\frac{2-q}2}\\
	&\le\mathcal R^{\frac{q}2}_q\left[\int_\Omega qa(x)\left(\left|\frac{\nabla u_{h_j}}{\lambda_{h_j}}\right|^q+\left|\frac{\nabla u}\lambda\right|^q\right)dx\right]^{\frac{2-q}2}\le M \mathcal R^{\frac{q}2}_q,
	\end{aligned}$$
	where 
	$M<\infty$ bounds from above the term in square brackets for all $j$, being $(|\nabla u_h|/\lambda_h)$ bounded in $L^\mathcal H(\Omega)$ and so in $L^q_a(\Omega)$. By \eqref{<2}, this implies that 
	\begin{equation}\label{min2}\int_\Omega qa(x)\left|\nabla\left(\frac{u_{h_j}}{\lambda_{h_j}}-\frac{u}\lambda\right)\right|^qdx\le \frac{M}{(q-1)^{\frac{q}2}}\,\mathcal R^{\frac{q}2}_q\end{equation}
	Analogous estimates as \eqref{mag2} and \eqref{min2} hold for the term in $p$, therefore, by \eqref{duelim}, this implies that 
	$$\lim_{j\to\infty}\varrho_\mathcal H\left(\frac{\nabla u_{h_j}}{\lambda_{h_j}}-\frac{\nabla u}{\lambda}\right)\le\lim_{j\to\infty}\int_\Omega\left(p\left|\frac{\nabla u_{h_j}}{\lambda_{h_j}}-\frac{\nabla u}{\lambda}\right|^p+qa(x)\left|\frac{\nabla u_{h_j}}{\lambda_{h_j}}-\frac{\nabla u}{\lambda}\right|^q\right)dx=0.$$
	By Lemma 2.1.11 of \cite{Base}, the norm convergence and the modular convergence are equivalent in the space
	$L^\mathcal H(\Omega)$, so we obtain
	$$
	\dfrac{\nabla u_{h_j}}{\lambda_{h_j}}\to\dfrac{\nabla u}{\lambda},\quad \text{in $[L^\mathcal H(\Omega)]^n$},
	$$ 
	and we can pass to the limit in the left-hand side of \eqref{DI} under the integral sign (as it was already done for \eqref{RHS}) to obtain 
	\begin{equation}
	\label{LHS}
	\begin{aligned}\lim_{j\to\infty}&\int_\Omega\left[p\left(\frac{|\nabla u_{h_j}|}{\lambda_{h_j}}\right)^{p-2}+qa(x)\left(\frac{|\nabla u_{h_j}|}{\lambda_{h_j}}\right)^{q-2}\right]\frac{\nabla u_{h_j}}{\lambda_{h_j}}\cdot\nabla v\, dx\\
	&=\int_\Omega\left[p\left(\frac{|\nabla u|}{\lambda}\right)^{p-2}+qa(x)\left(\frac{|\nabla u|}{\lambda}\right)^{q-2}\right]\frac{\nabla u}{\lambda}\cdot\nabla v\, dx, \quad\mbox{for all }v\in C^\infty_0(\Omega).
	\end{aligned}
	\end{equation}
	Since $u_{h_j}\to u$ in $L^\mathcal H(\Omega)$ and $\dfrac{\nabla u_{h_j}}{\lambda_{h_j}}\to\dfrac{\nabla u}{\lambda}$ in $[L^\mathcal H(\Omega)]^n$, by dominated convergence we also get
	$$
	\lim_{j\to\infty}S(u_{h_j})
	=\lim_{j\to\infty}\frac{\displaystyle\int_\Omega\left[p\left(\frac{|\nabla u_{h_j}|}{\lambda_{h_j}}\right)^p+qa(x)\left(\frac{|\nabla u_{h_j}|}{\lambda_{h_j}}\right)^q\right]dx}{\displaystyle\int_\Omega\left[p|u_{h_j}|^p+qa(x)|u_{h_j}|^q\right]dx}
	=S(u).
	$$ 
	Together with \eqref{RHS} and \eqref{LHS}, this proves claim, concluding the proof.
\end{proof}

\section{Variational eigenvalues}\label{sec4}

\noindent
Throughout this section, we assume that \eqref{cond-pq} holds and, unless explicitly stated, we consider $W^{1,\mathcal H}_0(\Omega)$ equipped with the $L^\mathcal H$-norm of the gradient.

\begin{lemma}\label{dominazione}
Let $u\in L^\mathcal H(\Omega)\setminus\{0\}$. For all $v\in L^\mathcal H(\Omega)$ the following inequality holds
$$|\langle k'(u),v\rangle|\le q\|v\|_\mathcal H.$$
\end{lemma}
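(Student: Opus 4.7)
The plan is to combine the explicit formula for $k'(u)=A(u)$ from Proposition~\ref{kC1}, the unit-ball property $\varrho_{\mathcal H}(u/k(u))=1$, and Young's inequality in the two phases separately. First, take absolute values inside the integral defining $\langle A(u),v\rangle$ and bound $|u/k(u)|^{p-2}(u/k(u))\le (|u|/k(u))^{p-1}$ (similarly for the $q$-term) to get
\[
|\langle k'(u),v\rangle|\le \frac{1}{f(u)}\int_\Omega\left[p\left(\tfrac{|u|}{k(u)}\right)^{p-1}+qa(x)\left(\tfrac{|u|}{k(u)}\right)^{q-1}\right]|v|\,dx,
\]
where $f(u)$ is the denominator appearing in $A(u)$. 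By inequality \eqref{den>1} one has $f(u)\ge 1$, so this harmless factor can be dropped.

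Next, I would write $|v|=\|v\|_{\mathcal H}\cdot |v|/\|v\|_{\mathcal H}$ and apply Young's inequality pointwise in the conjugate forms
\[
p\left(\tfrac{|u|}{k(u)}\right)^{p-1}\tfrac{|v|}{\|v\|_\mathcal H}\le (p-1)\left(\tfrac{|u|}{k(u)}\right)^{p}+\left(\tfrac{|v|}{\|v\|_\mathcal H}\right)^{p},
\]
\[
qa(x)\left(\tfrac{|u|}{k(u)}\right)^{q-1}\tfrac{|v|}{\|v\|_\mathcal H}\le (q-1)a(x)\left(\tfrac{|u|}{k(u)}\right)^{q}+a(x)\left(\tfrac{|v|}{\|v\|_\mathcal H}\right)^{q}.
\]
Integrating and using $(p-1)\le(q-1)$, the $u$-terms combine into at most $(q-1)\,\varrho_{\mathcal H}(u/k(u))=q-1$, while the $v$-terms form exactly $\varrho_{\mathcal H}(v/\|v\|_\mathcal H)=1$ (unit-ball property). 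Putting everything together yields
\[
|\langle k'(u),v\rangle|\le \|v\|_{\mathcal H}\bigl[(q-1)+1\bigr]=q\,\|v\|_{\mathcal H},
\]
as desired. The case $v=0$ is trivial.

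There is essentially no obstacle here; the only mild subtlety is choosing the right pairing in Young's inequality so that the $p$-term produces $(|u|/k(u))^{p}$ (rather than a mixed exponent) and so that the upper bound on the $p$-coefficient $(p-1)$ can be absorbed into the $q$-coefficient, making the normalisation $\varrho_{\mathcal H}(u/k(u))=1$ directly applicable. This also explains the appearance of the constant $q$ (as opposed to a mix of $p$ and $q$): it is the looser of the two Young exponents $p-1,q-1$ plus one.
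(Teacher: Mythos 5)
Your proof is correct and follows essentially the same route as the paper: drop the denominator $f(u)\ge 1$ using \eqref{den>1}, normalise $v$ by its norm, apply Young's inequality with the conjugate pairs $(p',p)$ and $(q',q)$ in each phase, and invoke the unit ball property to evaluate the two modulars, giving $(q-1)+1=q$. No gaps.
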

\begin{proof} For $v=0$ the thesis is obvious, hence we suppose $v\neq0$ and have by virtue of \eqref{den>1} and by Young's inequality
$$\begin{aligned}|\langle k'(u),v\rangle|&\le\|v\|_\mathcal H\int_\Omega\left[p\left(\frac{|u|}{k(u)}\right)^{p-1}+q a(x)\left(\frac{|u|}{k(u)}\right)^{q-1}\right]\frac{|v|}{k(v)}dx\\
&=\|v\|_\mathcal H\left[\int_\Omega p^{(p-1)/p}\left(\frac{|u|}{k(u)}\right)^{p-1}p^{1/p}\frac{|v|}{k(v)}dx\right.\\
&\phantom{=}+\left.\int_\Omega (qa(x))^{(q-1)/q}\left(\frac{|u|}{k(u)}\right)^{q-1}(qa(x))^{1/q}\frac{|v|}{k(v)}dx\right]\\
&\le\|v\|_\mathcal H\left[\left(1-\frac1p\right)\int_\Omega p\left(\frac{|u|}{k(u)}\right)^p dx+\int_\Omega\left(\frac{|v|}{k(v)}\right)^p dx \right.\\
&\phantom{=}+\left.\left(1-\frac1q\right)\int_\Omega qa(x)\left(\frac{|u|}{k(u)}\right)^q dx+\int_\Omega a(x)\left(\frac{|v|}{k(v)}\right)^q dx\right]\\
&\le\|v\|_\mathcal H\left\{(q-1)\varrho_\mathcal H\left(\frac{u}{k(u)}\right)+\varrho_\mathcal H\left(\frac{v}{k(v)}\right)\right\}=q\|v\|_\mathcal H.\end{aligned}$$ 
This concludes the proof.
\end{proof}

\begin{theorem}\label{PS} $\widetilde{K}:=K\big|_\mathcal M$ satisfies the (PS) condition, i.e. every sequence $(u_h)\subset\mathcal M$ such that $\widetilde{K}(u_h)\to c$ for some $c\in\mathbb R$ and $\widetilde{K}'(u_h)\to 0$ in $(W^{1,\mathcal H}_0(\Omega))'$ admits a convergent subsequence.  
\end{theorem}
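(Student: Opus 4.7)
The plan is to adapt the strategy of Theorem~\ref{closed}, replacing the sequence of eigenvalues with the sequence of Lagrange multipliers induced by the constraint $k=1$ on $\mathcal M$. Since $K(u_h)=\|\nabla u_h\|_{\mathcal H}\to c$, the sequence $(u_h)$ is bounded in the reflexive Banach space $W^{1,\mathcal H}_0(\Omega)$ (Proposition~\ref{reflexivity}); up to a subsequence $u_h\rightharpoonup u$ in $W^{1,\mathcal H}_0(\Omega)$. By the compact embedding in Proposition~\ref{sob-poinc}-$(iii)$, $u_h\to u$ strongly in $L^{\mathcal H}(\Omega)$, so $\|u\|_{\mathcal H}=1$ and $u\in\mathcal M$. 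Moreover, the Poincar\'e inequality \eqref{poinc} applied to $u_h$ yields $c\ge 1/C>0$, a quantitative lower bound that will be crucial when inverting $K(u_h)$ in the limit.

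Next, I would exploit the Lagrange multiplier structure on $\mathcal M$. Because $K$ and $k$ are $1$-homogeneous, a direct computation shows that $\langle K'(u_h),u_h\rangle=K(u_h)$ and $\langle k'(u_h),u_h\rangle=k(u_h)=1$, so the intrinsic differential on $\mathcal M$ reads
\[
\widetilde{K}'(u_h)=K'(u_h)-K(u_h)\,k'(u_h)\to 0\quad\text{in }(W^{1,\mathcal H}_0(\Omega))'.
\]
Testing this relation against $v=u_h-u\in W^{1,\mathcal H}_0(\Omega)$ and using Lemma~\ref{dominazione} with $\|u_h-u\|_{\mathcal H}\to 0$, I get $|\langle k'(u_h),u_h-u\rangle|\le q\|u_h-u\|_{\mathcal H}\to 0$; since $K(u_h)\to c$ is bounded, this forces $\langle K'(u_h),u_h-u\rangle\to 0$. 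Observing that the denominator $D(u_h)=\int_\Omega[p|\nabla u_h/K(u_h)|^p+qa|\nabla u_h/K(u_h)|^q]\,dx$ lies in $[p,q]$ by the unit ball property, this translates into
\[
\int_\Omega\!\left[p\!\left|\frac{\nabla u_h}{K(u_h)}\right|^{p-2}\!\!+qa(x)\!\left|\frac{\nabla u_h}{K(u_h)}\right|^{q-2}\right]\!\frac{\nabla u_h}{K(u_h)}\!\cdot\!\nabla(u_h-u)\,dx\to 0.
\]

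Subtracting the analogous expression with $u/c$ in place of $u_h/K(u_h)$ (which tends to zero because $\nabla u_h\rightharpoonup\nabla u$ in $[L^{\mathcal H}(\Omega)]^n$ and the corresponding vector field belongs to the dual $[L^{\mathcal H^*}(\Omega)]^n$), and absorbing the $O(K(u_h)-c)$ mismatch via the boundedness of $\|\nabla u_h\|_{\mathcal H}$ and $c>0$, one arrives at
\[
\frac{p}{c^{p-1}}\!\int_\Omega\!\bigl(|\nabla u_h|^{p-2}\nabla u_h-|\nabla u|^{p-2}\nabla u\bigr)\!\cdot\!\nabla(u_h-u)\,dx+\frac{q}{c^{q-1}}\!\int_\Omega\! a(x)\bigl(|\nabla u_h|^{q-2}\nabla u_h-|\nabla u|^{q-2}\nabla u\bigr)\!\cdot\!\nabla(u_h-u)\,dx\to 0.
\]
Both summands are nonnegative by convexity of $\xi\mapsto|\xi|^r$, hence each tends to zero individually. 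The Simon-type inequalities \eqref{ge2} and \eqref{<2} already used in the proof of Theorem~\ref{closed} then yield $\varrho_{\mathcal H}(\nabla u_h-\nabla u)\to 0$. By Lemma~2.1.11 of \cite{Base}, modular and norm convergence coincide in $L^{\mathcal H}(\Omega)$ under $(\Delta_2)$, so $\nabla u_h\to\nabla u$ in $[L^{\mathcal H}(\Omega)]^n$, i.e.\ $u_h\to u$ in $W^{1,\mathcal H}_0(\Omega)$, which is the desired strong convergence.

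The main obstacle I anticipate is the careful handling of the normalization: the Lagrange multiplier $K(u_h)$ enters the operator $K'(u_h)$ as a denominator raised to the powers $p-1$ and $q-1$, so passing from $\mu_h=K(u_h)$ to its limit $c$ requires $c>0$ (which is ensured by Poincar\'e) to avoid a degenerate scaling, and the small mismatch $\mu_h-c$ must be dominated via H\"older-type estimates using the boundedness of $\nabla u_h$ in $[L^{\mathcal H}(\Omega)]^n$. Once the normalization is dealt with, the rest of the proof is a structural adaptation of the closedness argument based on the monotonicity of the double-phase operator.
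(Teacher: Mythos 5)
Your argument is correct, and the first half (boundedness from $K(u_h)\to c$, weak convergence, strong $L^{\mathcal H}$-convergence via Proposition~\ref{sob-poinc}-$(iii)$, Lemma~\ref{dominazione} to kill the multiplier term, hence $\langle K'(u_h),u_h-u\rangle\to 0$) coincides with the paper's. You then diverge: you import the machinery of Theorem~\ref{closed}, namely the weak-convergence cancellation of the frozen term at $u/c$, the splitting into the two pointwise nonnegative monotonicity integrals, and the Simon-type inequalities \eqref{ge2}--\eqref{<2} to upgrade to modular (hence norm) convergence of the gradients. The paper instead finishes in three lines: convexity of $K$ gives $\|\nabla u_h\|_{\mathcal H}\le\|\nabla u\|_{\mathcal H}+\langle K'(u_h),u_h-u\rangle$, which together with weak lower semicontinuity yields $\|\nabla u_h\|_{\mathcal H}\to\|\nabla u\|_{\mathcal H}$, and then uniform convexity of $W^{1,\mathcal H}_0(\Omega)$ (Proposition~\ref{reflexivity}, a Radon--Riesz argument) gives strong convergence. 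Your route is longer and forces you to track the normalization mismatches $K(u_h)^{-(r-1)}-c^{-(r-1)}$, but it is self-contained at the level of the operator and would survive in settings where one only has strict monotonicity rather than uniform convexity of the ambient norm; the paper's route buys brevity by exploiting that $K$ is itself a norm. One cosmetic caveat: the identity $\widetilde{K}'(u_h)=K'(u_h)-K(u_h)k'(u_h)$ is not the literal content of the (PS) hypothesis; the cleaner formulation (used in the paper) is that there exist multipliers $c_h$ with $K'(u_h)-c_hk'(u_h)\to 0$, and testing with $u_h$ plus the bound $\|k'(u_h)\|_{(W^{1,\mathcal H}_0)'}\le qC$ from Lemma~\ref{dominazione} and \eqref{poinc} shows $c_h\to c$, after which your identity holds up to a vanishing error. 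This is a presentational point, not a gap.
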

\begin{proof} By hypotheses there exist $c\in\mathbb R$ and a sequence $(c_h)\subset\mathbb R$ such that 
\begin{equation}\label{PSseq}K(u_h)\to c\quad\mbox{and}\quad K'(u_h)-c_hk'(u_h)\to 0\mbox{ in }(W^{1,\mathcal H}_0(\Omega))'.\end{equation}
It is easy to see that $\langle K'(u_h),u_h\rangle=K(u_h)$ and $\langle k'(u_h),u_h\rangle=k(u_h)=1$, so \eqref{PSseq} implies that $c_h\to c$. Since $(u_h)$ is bounded in $W^{1,\mathcal H}_0(\Omega)$, up to a subsequence, $u_h\rightharpoonup u$ in $W^{1,\mathcal H}_0(\Omega)$ and $u_h\to u$ in $L^\mathcal H(\Omega)$. Thus, by Lemma \ref{dominazione}, 
$$|\langle k'(u_h),u_h-u\rangle|\le q\|u_h-u\|_\mathcal H\to 0,$$
and so the second limit in \eqref{PSseq} implies
\begin{equation}\label{K'}
\langle K'(u_h),u_h-u\rangle\to0\end{equation}
as $h\to\infty$. Now, by the convexity of the $C^1$ functional $K$, we obtain for all $h$
$$\|\nabla u_h\|_\mathcal H\le\|\nabla u\|_\mathcal H+\langle K'(u_h),u_h-u\rangle.$$
Hence, \eqref{K'} and the weak lower semicontinuity of the norm give
$$\limsup_{h\to\infty}\|\nabla u_h\|_\mathcal H\le\|\nabla u\|_\mathcal H\le\liminf_{h\to\infty}\|\nabla u_h\|_\mathcal H,$$
whence
$$\|\nabla u_h\|_\mathcal H\to\|\nabla u\|_\mathcal H\quad\mbox{as }h\to\infty.$$
Clearly, also $\|u_h\|_\mathcal H\to\|u\|_\mathcal H$. Therefore, being $(W^{1,\mathcal H}_0(\Omega),\|\cdot\|_{1,\mathcal H})$ uniformly convex by Proposition~\ref{reflexivity}, we get $u_h\to u$ in $W^{1,\mathcal H}_0(\Omega)$ and conclude the proof.
\end{proof}

\noindent
The previous result allows us to define a sequence of eigenvalues of \eqref{EL} by a minimax procedure. 

\begin{definition}{\rm For $m\in\mathbb N$, we define the {\it $m$-th variational eigenvalue} $\lambda^m_\mathcal H$ of \eqref{EL} as
\begin{equation}\label{mth-eigen}\lambda^{m}_\mathcal H:=\inf_{K\in\mathcal W^m_\mathcal H}\sup_{u\in K}\|\nabla u\|_\mathcal H,\end{equation}
where $\mathcal W^m_\mathcal H$ is the set of compact subsets $K$ of $\mathcal M:=\{u\in W^{1,\mathcal H}_0(\Omega)\,:\, \|u\|_\mathcal H=1\}$ that are symmetric (i.e. $K=-K$) and have topological index $i(K)\ge m$.}
\end{definition}

\noindent
The topological index $i$ can be chosen as the Krasnosel'ski\u{\i} genus or the $\mathbb Z_2$-cohomological index of Fadell and Rabinowitz, the results below hold with any index $i$ satisfying the following properties: 

\begin{itemize}
\item[($i_1$)]
if $X$ is a topological vector space and
$K\subseteq X\setminus\{0\}$ is compact, symmetric and nonempty,
$i(K)$ is an integer greater or equal than $1$;
\item[($i_2$)]
if $X$ is a topological vector space and
$K\subseteq X\setminus\{0\}$ is compact, symmetric and nonempty,
then there exists an open subset $U$ of $X\setminus\{0\}$
such that $K\subseteq U$ and
$i(\hat{K}) \leq i(K)$
for any compact, symmetric and nonempty $\hat{K}\subseteq U$\,;
\item[($i_3$)]
if $X, Y$ are two topological vector spaces,
$K\subseteq X\setminus\{0\}$ is compact, symmetric and nonempty
and $\pi:K\to Y\setminus\{0\}$ is continuous and
odd, we have $i(\pi(K)) \geq i(K)$\,;
\item[($i_4$)] if $(X,\|\cdot\|)$ is a normed space with $1\le \mathrm {dim} X<\infty$, then 
$$i(\{u\in X\,:\,\|u\|=1\})=\mathrm{dim}X.$$
\end{itemize}

\noindent
We remark that the new definition of $\lambda^1_\mathcal H$ is consistent with \eqref{Rr}, by virtue of property $(i_1)$ and the fact that $K$ is an even functional. 
\medskip

\noindent
$\bullet$ {\it Proof of Theorem \ref{nonlinspec}.} By virtue of Theorem \ref{PS}, we can apply Theorem 5.11 of \cite{Struwe} (see also Propositions 3.52 and 3.53 of \cite{PAORbook}) to prove that the values defined in \eqref{mth-eigen} are actually eigenvalues of \eqref{EL} in the sense of Definition \ref{eigen}. Furthermore, since $\mathcal W^{m+1}_\mathcal H\subseteq\mathcal W^m_\mathcal H$ for all $m$, $(\lambda^{m}_\mathcal H)$ defines a non-decreasing sequence.
Finally, $\lambda^{m}_\mathcal H\nearrow\infty$ as $m\to\infty$, being $i(\mathcal M)=\infty$ by $(i_4)$ and $(i_2)$.\hfill$\Box$

\section{Stability of variational eigenvalues}\label{sec5}
\noindent
In this section, we assume again the validity of condition \eqref{cond-pq} and endow $W^{1,\mathcal H}_0(\Omega)$ with the $L^\mathcal H(\Omega)$-norm of the gradient.

\noindent For any couple of real numbers $(p,q)$ such that $1<p<q< n$, we define the corresponding $N$-function $\mathcal H(x,t):=t^p+a(x)t^q$ for all $(x,t)\in\Omega\times [0,\infty)$ and the related functionals $\mathscr E_\mathcal H:L^1(\Omega)\to[0,\infty]$ 
as
\begin{equation}\label{EH}
\mathscr E_\mathcal H(u):=\begin{cases}\|\nabla u\|_\mathcal H\quad&\mbox{if }u\in W^{1,\mathcal H}_0(\Omega),\\
+\infty\quad&\mbox{otherwise}\end{cases}
\end{equation}  
and $g_\mathcal H:L^1(\Omega)\to[0,\infty)$  as
$$g_\mathcal H(u):=\begin{cases}\|u\|_\mathcal H\quad&\mbox{if }u\in L^\mathcal H(\Omega),\\
0\quad&\mbox{otherwise.}\end{cases}$$

\begin{proposition}\label{g} The following properties hold:
\begin{itemize}
\item[$(i)$] $g_\mathcal H$ is even and positively homogeneous of degree $1$;
\item[$(ii)$] for every $b\in\mathbb R$ the restriction of $g_\mathcal H$ to $\{u\in L^1(\Omega)\,:\, \mathscr E_\mathcal H(u)\le b\}$ is continuous. 
\end{itemize}
\end{proposition}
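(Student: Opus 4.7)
For part $(i)$, I would argue directly from the definition. When $u \in L^\mathcal H(\Omega)$, so is $\lambda u$ for every $\lambda \in \R$ (since $L^\mathcal H(\Omega)$ is a vector space by Proposition \ref{Banach}), and $g_\mathcal H(\lambda u) = \|\lambda u\|_\mathcal H = |\lambda|\,\|u\|_\mathcal H = |\lambda|\, g_\mathcal H(u)$, which gives both evenness and positive homogeneity of degree $1$. When $u \notin L^\mathcal H(\Omega)$, the same is true of $\lambda u$ for $\lambda \neq 0$, so both $g_\mathcal H(\lambda u)$ and $|\lambda| g_\mathcal H(u)$ equal $0$; the case $\lambda = 0$ is trivial since $g_\mathcal H(0)=0$.

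For part $(ii)$, fix $b\ge 0$ (for $b<0$ the sublevel set is empty and there is nothing to prove) and set
\[
A_b := \{u \in L^1(\Omega)\,:\, \mathscr E_\mathcal H(u) \le b\} = \{u \in W^{1,\mathcal H}_0(\Omega)\,:\, \|\nabla u\|_\mathcal H \le b\}.
\]
The plan is to take $u \in A_b$ and $(u_h)\subset A_b$ with $u_h \to u$ in $L^1(\Omega)$, and to show $\|u_h\|_\mathcal H \to \|u\|_\mathcal H$. By the Poincar\'e-type inequality \eqref{poinc}, the bound $\|\nabla u_h\|_\mathcal H \le b$ forces $(u_h)$ to be bounded in $W^{1,\mathcal H}_0(\Omega)$ (equipped with the equivalent norm $\|\nabla \cdot\|_\mathcal H$). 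The compact embedding $W^{1,\mathcal H}_0(\Omega) \hookrightarrow\hookrightarrow L^\mathcal H(\Omega)$ from Proposition \ref{sob-poinc}-$(iii)$ is then the key tool.

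The natural implementation is a subsequence argument. Given any subsequence $(u_{h_k})$, by reflexivity of $W^{1,\mathcal H}_0(\Omega)$ (Proposition \ref{reflexivity}) and the cited compact embedding, I can extract a further subsequence $(u_{h_{k_j}})$ strongly convergent in $L^\mathcal H(\Omega)$ to some $v \in W^{1,\mathcal H}_0(\Omega)$. Because $L^\mathcal H(\Omega) \hookrightarrow L^p(\Omega) \hookrightarrow L^1(\Omega)$ (Proposition \ref{embeddingW}-$(i)$ together with boundedness of $\Omega$), this convergence also holds in $L^1(\Omega)$; by uniqueness of $L^1$-limits, $v = u$. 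Hence $\|u_{h_{k_j}}\|_\mathcal H \to \|u\|_\mathcal H$. Since every subsequence of $(g_\mathcal H(u_h)) = (\|u_h\|_\mathcal H)$ admits a further subsequence converging to $g_\mathcal H(u) = \|u\|_\mathcal H$, the whole sequence converges to $g_\mathcal H(u)$, yielding the claimed continuity.

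The only mild subtlety I anticipate is verifying that convergence in $L^\mathcal H(\Omega)$ really does propagate to $L^1(\Omega)$ so as to identify the limit with the prescribed $u$; this is handled by the chain of continuous embeddings just mentioned and the boundedness of $\Omega$. The rest is a routine application of the compactness provided by Proposition \ref{sob-poinc}-$(iii)$.
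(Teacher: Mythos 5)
Your proof is correct. The paper does not prove Proposition \ref{g} itself but defers to Proposition 2.3 of \cite{cs}; your argument (evenness and homogeneity of the Luxemburg norm for $(i)$; boundedness in the reflexive space $W^{1,\mathcal H}_0(\Omega)$ via Poincar\'e, the compact embedding of Proposition \ref{sob-poinc}-$(iii)$, identification of the limit through $L^{\mathcal H}\hookrightarrow L^1$, and the subsequence principle for $(ii)$) is exactly the standard scheme that the paper uses for the analogous statements (e.g.\ Theorem \ref{teo2}), so it matches the intended proof.
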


\noindent For the proof of Proposition \ref{g} we refer the reader to Proposition 2.3 of \cite{cs}, where a similar result is given in the case of functionals defined in variable exponent spaces.\medskip 

\begin{definition}{\rm $((p_h,q_h))\subset\mathbb R^2$ is an {\it admissible non-increasing sequence converging to} $(p,q)$, and we shall write $(p_h,q_h)\searrow (p,q)$, if $q_1< n$, $p_h<q_h$ for all $h\in\mathbb N$, $p_h\searrow p$ and $q_h\searrow q$ as $h\to\infty$.}\end{definition}

\begin{lemma}\label{conver} Let $((p_h,q_h))$ be an admissible non-increasing sequence converging to $(p,q)$. Then, for all $w\in C^1_0(\Omega)$, $$\lim_{h\to\infty}\|\nabla w\|_{\mathcal H_h}=\|\nabla w\|_\mathcal H,$$ where $\mathcal H_h$ is the N-function corresponding to the exponents $p_h$ and $q_h$ for all $h$.
\end{lemma}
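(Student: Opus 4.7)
The plan is to exploit the regularity of $w$ and the unit ball property to transfer convergence at the level of the modular to convergence at the level of the Luxemburg norm. Note that for $w\in C^1_0(\Omega)$ the gradient $\nabla w$ is bounded (say $|\nabla w|\le M$) and has compact support $K\subset\Omega$. Moreover, by \eqref{cond-pq}, $a$ is Lipschitz on $\overline\Omega$, hence bounded. These two facts are all we need to apply dominated convergence in the modular.

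First, I would fix an arbitrary $\gamma>0$ and prove the pointwise limit
$$
\lim_{h\to\infty}\varrho_{\mathcal H_h}(\nabla w/\gamma)=\varrho_{\mathcal H}(\nabla w/\gamma).
$$
Indeed, for every $x\in\Omega$ with $\nabla w(x)\neq 0$, since $p_h\searrow p$ and $q_h\searrow q$, the integrand $(|\nabla w(x)|/\gamma)^{p_h}+a(x)(|\nabla w(x)|/\gamma)^{q_h}$ converges to $(|\nabla w(x)|/\gamma)^{p}+a(x)(|\nabla w(x)|/\gamma)^{q}$. Moreover, for each $h$ one has the uniform bound
$$
\left(\tfrac{|\nabla w|}{\gamma}\right)^{p_h}+a(x)\left(\tfrac{|\nabla w|}{\gamma}\right)^{q_h}\le \bigl(1+\|a\|_\infty\bigr)\left(1+\left(\tfrac{M}{\gamma}\right)^{q_1}\right)\chi_K(x),
$$
which is integrable on $\Omega$. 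Dominated convergence gives the claim.

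Next, I use the unit ball property (Proposition~\ref{properties}$(ii)$) to convert this into convergence of norms. For the $\limsup$: given $\varepsilon>0$ and $\gamma:=\|\nabla w\|_{\mathcal H}+\varepsilon$, we have $\varrho_\mathcal H(\nabla w/\gamma)<1$, so by the pointwise limit just proved, $\varrho_{\mathcal H_h}(\nabla w/\gamma)\le 1$ for all $h$ large, whence $\|\nabla w\|_{\mathcal H_h}\le\gamma$ by the Luxemburg definition, yielding $\limsup_h\|\nabla w\|_{\mathcal H_h}\le\|\nabla w\|_\mathcal H+\varepsilon$. For the $\liminf$: if $\|\nabla w\|_\mathcal H=0$ the inequality is trivial, otherwise pick $\varepsilon\in(0,\|\nabla w\|_\mathcal H)$ and set $\gamma:=\|\nabla w\|_\mathcal H-\varepsilon$, so that $\varrho_\mathcal H(\nabla w/\gamma)>1$; passing to the limit one gets $\varrho_{\mathcal H_h}(\nabla w/\gamma)>1$ for $h$ large, hence $\|\nabla w\|_{\mathcal H_h}\ge\gamma$, giving $\liminf_h\|\nabla w\|_{\mathcal H_h}\ge\|\nabla w\|_\mathcal H-\varepsilon$. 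Letting $\varepsilon\to 0^+$ in both inequalities yields the desired limit.

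There is no real obstacle here: the argument is essentially a soft one, combining dominated convergence of the modular with the unit ball property. The reason this works on $C^1_0(\Omega)$ (rather than on arbitrary elements of $W^{1,\mathcal H}_0(\Omega)$) is precisely that the boundedness and compact support of $\nabla w$ provide the integrable envelope needed for dominated convergence; extending this convergence to general $w\in W^{1,\mathcal H}_0(\Omega)$ would instead require a density argument and uniform equi-integrability estimates, which is presumably carried out in the subsequent $\Gamma$-convergence discussion.
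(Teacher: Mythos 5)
Your proof is correct and follows essentially the same strategy as the paper's: establish convergence of the modulars and then transfer it to the Luxemburg norms via the unit ball property. The only (cosmetic) difference is that the paper uses Fatou's lemma for the lower bound and a homogeneity trick ($\varrho_{\mathcal H}(\gamma v)\le\gamma\varrho_{\mathcal H}(v)$ for $\gamma\in(0,1)$) for the upper bound, whereas you run dominated convergence in both directions, which is legitimate here precisely because $\nabla w$ is bounded with compact support and $a$ is bounded.
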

\begin{proof} First, by Fatou's lemma we get that 
\begin{equation}\label{lsc}\varrho_\mathcal H(\nabla w)=\int_\Omega(|\nabla w|^p+a(x)|\nabla w|^q)dx \le\liminf_{h\to\infty}\int_\Omega(|\nabla w|^{p_h}+a(x)|\nabla w|^{q_h})dx=\liminf_{h\to\infty}\varrho_{\mathcal H_h}(\nabla w).\end{equation}
Now, let $\alpha:=\liminf_{h\to\infty}\|\nabla w\|_{\mathcal H_h}<\infty$ (in the case $\alpha=\infty$ this part of the proof is obvious) and take $\gamma>\alpha$. There exists a subsequence $((p_{h_j}, q_{h_j}))$ for which $\|\nabla w\|_{\mathcal H_{h_j}}\le \gamma$ for all $j$, and so by the unit ball property and by \eqref{lsc}
$$\varrho_\mathcal H\left(\frac{\nabla w}{\gamma}\right)\le\liminf_{j\to\infty}\varrho_{\mathcal H_{h_j}}\left(\frac{\nabla w}{\gamma}\right)\le 1$$
and so $\|\nabla w\|_\mathcal H\le\gamma$. By the arbitrariness of $\gamma$ we get  
$$\|\nabla w\|_\mathcal H\le\liminf_{h\to\infty}\|\nabla w\|_{\mathcal H_h}.$$
It remains to prove that 
\begin{equation}\label{limsup}\|\nabla w\|_\mathcal H\ge\limsup_{h\to\infty}\|\nabla w\|_{\mathcal H_h}.\end{equation} 
If $\|\nabla w\|_\mathcal H=0$ the conclusion follows immediately. Let us assume that $\|\nabla w\|_\mathcal H>0$ and take any $\gamma\in (0,1)$. By the boundedness of $\Omega$ we get 
$$\left|\frac{\gamma \nabla w}{\|\nabla w\|_\mathcal H}\right|^{p_h}+a(x)\left|\frac{\gamma \nabla w}{\|\nabla w\|_\mathcal H}\right|^{q_h}\le(1+a(x))\left(1+\left|\frac{\gamma \nabla w}{\|\nabla w\|_\mathcal H}\right|^n\right)\in L^1(\Omega).$$
Hence, by the dominated convergence theorem and by (2.1.5) of \cite{Base} 
$$\begin{aligned}\lim_{h\to\infty}\varrho_{\mathcal H_h}\left(\frac{\gamma \nabla w}{\|\nabla w\|_\mathcal H}\right)
&=\int_\Omega\left(\left|\frac{\gamma \nabla w}{\|\nabla w\|_\mathcal H}\right|^{p}+a(x)\left|\frac{\gamma \nabla w}{\|\nabla w\|_\mathcal H}\right|^{q}\right)dx=\varrho_\mathcal H\left(\frac{\gamma \nabla w}{\|\nabla w\|_\mathcal H}\right)\\
&\le \gamma \varrho_\mathcal H\left(\frac{\nabla w}{\|\nabla w\|_\mathcal H}\right)=\gamma<1\end{aligned}$$
Therefore, for $h$ sufficiently large $\varrho_{\mathcal H_h}\left(\gamma \nabla w/\|\nabla w\|_\mathcal H\right)<1$ and by the unit ball property $$\left\|\frac{\gamma \nabla w}{\|\nabla w\|_\mathcal H}\right\|_{\mathcal H_h}<1$$
that is $\|\nabla w\|_{\mathcal H_h}<\|\nabla w\|_\mathcal H/\gamma$. Whence, 
$$\limsup_{h\to\infty}\|\nabla w\|_{\mathcal H_h}\le\frac{\|\nabla w\|_\mathcal H}{\gamma}\quad \mbox{for all }\gamma\in(0,1),$$
which implies \eqref{limsup}.
\end{proof}

\begin{lemma}\label{emb} Put $\tilde{\mathcal H}(x,t):=t^{\tilde{p}}+a(x)t^{\tilde{q}}$ for all $(x,t)\in\Omega\times[0,\infty)$. If $p\le\tilde{p}$ and $q\le\tilde{q}$, with $1<\tilde{p}<\tilde{q}< n$, then $L^{\tilde{\mathcal H}}(\Omega)\hookrightarrow L^{\mathcal H}(\Omega)$, with constant embedding less than or equal to $|\Omega|+\|a\|_1+1$. If furthermore $\tilde{p}< 2p$ and $\tilde{q}< 2q$, the embedding constant is less than or equal to 
\begin{equation}\label{CHH}C_{\tilde{\mathcal H},\mathcal H} := \begin{cases}\dfrac{\tilde{p}-p}p(|\Omega|+\|a\|_1)+\left(\dfrac{\tilde{p}-p}p\right)^{\frac{p-\tilde{p}}p}\quad&\mbox{if }\dfrac{\tilde{q}}q\le\dfrac{\tilde{p}}p,\\
\dfrac{\tilde{q}-q}q(|\Omega|+\|a\|_1)+\left(\dfrac{\tilde{q}-q}q\right)^{\frac{q-\tilde{q}}q}\quad&\mbox{otherwise.}\end{cases}\end{equation}
In particular, if $(p_h,q_h)\searrow (p,q)$, then $C_{\mathcal H_h,\mathcal H}\to 1$ as $h\to\infty$, where $\mathcal H_h(x,t):=t^{p_h}+a(x)t^{q_h}$.
\end{lemma}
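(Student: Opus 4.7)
The backbone of the argument I would use is the elementary layer-cake inequality
\[
t^r \le R^r + R^{r-\tilde r}\,t^{\tilde r},\qquad t\ge 0,\ R>0,\ 1<r<\tilde r,
\]
obtained by separating the cases $\{t\le R\}$ and $\{t>R\}$. Applied with exponents $(p,\tilde p)$ and, after multiplication by $a(x)$, with $(q,\tilde q)$, and then integrated, this produces a family of modular bounds indexed by the cutoff $R$. The whole proof then reduces to choosing $R$ wisely and passing from a modular bound to a norm bound via the unit-ball property of Proposition~\ref{properties} together with the convexity estimate $\varrho_\mathcal H(u/\gamma)\le \varrho_\mathcal H(u)/\gamma$ for $\gamma\ge 1$, which holds because $\mathcal H(x,\cdot)$ is convex and vanishes at $0$.

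For the coarse constant $|\Omega|+\|a\|_1+1$ I would simply take $R=1$, obtaining $\mathcal H(x,t)\le (1+a(x))+\tilde{\mathcal H}(x,t)$ pointwise and hence $\varrho_\mathcal H(u)\le (|\Omega|+\|a\|_1)+\varrho_{\tilde{\mathcal H}}(u)$. Normalising $\|u\|_{\tilde{\mathcal H}}=1$ gives $\varrho_\mathcal H(u)\le |\Omega|+\|a\|_1+1=:C_0\ge 1$, whence $\|u\|_\mathcal H\le C_0$ by the convexity/unit-ball step; homogeneity closes the case.

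For the refined constant I would treat the branch $\tilde q/q\le \tilde p/p$ (the other being symmetric). Writing $\delta_p:=(\tilde p-p)/p$ and $\delta_q:=(\tilde q-q)/q$, the hypothesis reads $\delta_q\le\delta_p$, while $\tilde p<2p$ forces $\delta_p\in(0,1)$. My calibration is $R=\delta_p^{1/p}$ in the $p$-inequality and $R=\delta_p^{1/q}$ in the $q$-inequality, chosen so that both constant terms are exactly $\delta_p$; substitution then yields
\begin{align*}
\int_\Omega |u|^p\,dx &\le \delta_p|\Omega|+\delta_p^{-\delta_p}\int_\Omega|u|^{\tilde p}\,dx, \\
\int_\Omega a(x)|u|^q\,dx &\le \delta_p\|a\|_1+\delta_p^{-\delta_q}\int_\Omega a(x)|u|^{\tilde q}\,dx.
\end{align*}
Because $\delta_p\in(0,1)$ the map $x\mapsto\delta_p^{-x}$ is strictly increasing, so $\delta_q\le\delta_p$ yields $\delta_p^{-\delta_q}\le\delta_p^{-\delta_p}$. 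Adding the two estimates and using $\varrho_{\tilde{\mathcal H}}(u)\le 1$, I would conclude $\varrho_\mathcal H(u)\le \delta_p(|\Omega|+\|a\|_1)+\delta_p^{-\delta_p}=C_{\tilde{\mathcal H},\mathcal H}$; since $\delta_p^{-\delta_p}\ge 1$ on $(0,1]$ this quantity is $\ge 1$, so the convexity/homogeneity step once again upgrades it to $\|u\|_\mathcal H\le C_{\tilde{\mathcal H},\mathcal H}\|u\|_{\tilde{\mathcal H}}$. The limit statement is then immediate: as $(p_h,q_h)\searrow(p,q)$ both $\delta_{p_h}$ and $\delta_{q_h}$ tend to $0^+$, and the elementary limits $\delta^{-\delta}\to 1$ and $\delta(|\Omega|+\|a\|_1)\to 0$ force $C_{\mathcal H_h,\mathcal H}\to 1$ in either branch of the piecewise definition.

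The point I expect to be most delicate is the calibration of the two cutoffs $R$ that simultaneously matches the coefficients of $|\Omega|$ and $\|a\|_1$ and produces $\delta_p^{-\delta_p}$ as the dominant coefficient of the $\tilde{\mathcal H}$-modular; this is precisely where the hypothesis $\tilde p<2p$ (resp.\ $\tilde q<2q$ in the symmetric branch) enters, ensuring the monotonicity of $x\mapsto\delta_p^{-x}$ on which the bound $\delta_p^{-\delta_q}\le\delta_p^{-\delta_p}$ hinges. Everything else in the argument is routine.
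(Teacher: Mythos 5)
Your proof is correct and follows essentially the same route as the paper's: your layer-cake inequality $t^r\le R^r+R^{r-\tilde r}t^{\tilde r}$ with $R=\varepsilon^{1/r}$ is exactly the Young-type pointwise estimate the paper uses, your calibration making both constant terms equal to $\delta_p$ (resp.\ $\delta_q$ in the other branch) reproduces the paper's choice $\varepsilon=(\tilde p-p)/p$ (resp.\ $(\tilde q-q)/q$), and the upgrade from the modular bound to the norm bound via $\varrho_\mathcal H(u/\gamma)\le\varrho_\mathcal H(u)/\gamma$ for $\gamma\ge1$ and the unit-ball property is identical. The only cosmetic difference is that you derive the elementary inequality by splitting $\{t\le R\}$ and $\{t>R\}$ instead of invoking Young's inequality.
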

\begin{proof} In what follows we shall use Young's inequality in this form, 
\begin{equation}\label{Yi}ab\le\varepsilon a^{p'}+C_\varepsilon b^{p}\quad\mbox{ for all }a,\,b\ge0,\,p>1,\,\varepsilon>0,\end{equation}
with $p'=p/(p-1)$ and $C_\varepsilon=\varepsilon^{-(p-1)}$.
By \eqref{Yi}, for all $(x,t)\in\Omega\times[0,\infty)$ and $\varepsilon>0$
$$
\begin{gathered}
t^p\le\varepsilon+\frac1{\varepsilon^{(\tilde{p}-p)/p}}t^{\tilde{p}}\\
a(x)t^{q}\le \varepsilon a(x)+\frac{a(x)}{\varepsilon^{(\tilde{q}-q)/q}}t^{\tilde{q}}.
\end{gathered}
$$
Thus, 
$$\mathcal H(x,t)\le\varepsilon(1+a(x))+\frac1{\min\{\varepsilon^{(\tilde{p}-p)/p},\varepsilon^{(\tilde{q}-q)/q}\}}\tilde{\mathcal H}(x,t)$$
and in turn for all $u\in L^{\tilde{\mathcal H}}(\Omega)$ 
\begin{equation}\label{rho}\varrho_\mathcal H(u)\le\varepsilon(|\Omega|+\|a\|_1)+ \frac1{\min\{\varepsilon^{(\tilde{p}-p)/p},\varepsilon^{(\tilde{q}-q)/q}\}}\varrho_{\tilde{\mathcal H}}(u).\end{equation}
First we put $\varepsilon:=1$ and we get for $u\neq0$, by \eqref{rho} and by the unit ball property, 
$$\varrho_\mathcal H\left(\frac{u}{\|u\|_{\tilde{\mathcal H}}}\right)\le|\Omega|+\|a\|_1+1,$$ 
and consequently, by (2.1.5) of \cite{Base},
$$\varrho_\mathcal H\left(\frac{u}{(|\Omega|+\|a\|_1+1)\|u\|_{\tilde{\mathcal H}}}\right)\le\frac1{|\Omega|+\|a\|_1+1}\varrho_\mathcal H\left(\frac{u}{\|u\|_{\tilde{\mathcal H}}}\right)\le1,$$
which again by the unit ball property gives
$$\|u\|_\mathcal H\le (|\Omega|+\|a\|_1+1)\|u\|_{\tilde{\mathcal H}}\quad\mbox{for all }u\in L^{\tilde{\mathcal H}}(\Omega).$$

\noindent For the second part of the statement, if $\tilde{q}/q\le\tilde{p}/p$, we fix $\varepsilon:= (\tilde{p}-p)/p \in (0,1)$ and we get 
$$\min\left\{\varepsilon^{\frac{\tilde{p}-p}p},\varepsilon^{\frac{\tilde{q}-q}q}\right\}= \left(\frac{\tilde{p}-p}p\right)^{\frac{\tilde{p}-p}p},$$
otherwise we fix $\varepsilon:= \frac{\tilde{q}-q}q \in (0,1)$ and we get 
$$\min\left\{\varepsilon^{\frac{\tilde{p}-p}p},\varepsilon^{\frac{\tilde{q}-q}q}\right\}= \left(\frac{\tilde{q}-q}q\right)^{\frac{\tilde{q}-q}q}.$$ 
Now, by the unit ball property and by \eqref{rho},
$$\varrho_\mathcal H\left(\frac{u}{\|u\|_{\tilde{\mathcal H}}}\right)\le C_{\tilde{\mathcal H},\mathcal H}\quad\mbox{for all } u\in L^{\tilde{\mathcal H}}(\Omega)\setminus\{0\}.$$
Therefore, being $\tilde{p}<2p$ and $\tilde{q}<2q$, $C_{\tilde{\mathcal H},\mathcal H}>1$ and so
$$\varrho_\mathcal H\left(\frac{u}{C_{\tilde{\mathcal H},\mathcal H}\|u\|_{\tilde{\mathcal H}}}\right)\le\frac1{C_{\tilde{\mathcal H},\mathcal H}}\varrho_\mathcal H\left(\frac{u}{\|u\|_{\tilde{\mathcal H}}}\right)\le 1.$$
The unit ball property then gives, 
$$\|u\|_\mathcal H\le C_{\tilde{\mathcal H},\mathcal H}\|u\|_{\tilde{\mathcal H}}\quad\mbox{for all }u\in L^{\tilde{\mathcal H}}(\Omega),$$
that is the thesis.
\end{proof}

\noindent
We now recall from \cite{dalmaso1993} the notion of $\Gamma$-convergence that will be useful in the sequel.

\begin{definition}\rm 
Let $X$ be a metrizable topological space and let $(f_h)$ be a sequence of functions from $X$ to $\overline{\mathbb{R}}$. The \emph{$\Gamma$-lower limit} and the \emph{$\Gamma$-upper limit} of the sequence $(f_h)$ are the functions from $X$ to $\overline{\mathbb{R}}$ defined by 
\begin{gather*}
\Big(\Gamma-\liminf_{h\to\infty} f_h\Big)(u)
= \sup_{U\in \mathcal{N}(u)}
\Big[\liminf_{h\to\infty}
\bigl(\inf\{f_h(v):v\in U\}
\bigr)\Big]\,,\\
\Big(\Gamma-\limsup_{h\to\infty} f_h\Big)(u)
=\sup_{U\in \mathcal{N}(u)}
\Big[\limsup_{h\to\infty}
\bigl(\inf\{f_h(v):v\in U\}\bigr)\Big]\,,
\end{gather*}
where $\mathcal{N}(u)$ denotes the family of all open neighborhoods of $u$ in $X$.
If there exists a function $f:X\to\overline{\R}$ such that 
$$
\Gamma-\liminf_{h\to\infty}f_h=\Gamma-\limsup_{h\to\infty}f_h=f,
$$
then we write $\Gamma-\lim\limits_{h\to\infty} f_h=f$
and we say that $(f_h)$ \emph{$\Gamma$-converges} to its \emph{$\Gamma$-limit} $f$.
\end{definition}

\begin{theorem}\label{gammaconv}
If $(p_h,q_h)\searrow (p,q)$, then
$$\mathscr E_\mathcal H(u)=\left(\Gamma-\lim_{h\to\infty}\mathscr E_{\mathcal H_h}\right)(u)\quad\mbox{for all }u\in L^1(\Omega).$$
\end{theorem}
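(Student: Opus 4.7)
My plan is to establish the two standard halves of $\Gamma$-convergence in a metrizable setting: the liminf inequality and the existence of a recovery sequence. The two key ingredients already at our disposal are Lemma~\ref{conver} (pointwise convergence $\|\nabla w\|_{\mathcal H_h}\to\|\nabla w\|_{\mathcal H}$ for $w\in C^1_0(\Omega)$) and Lemma~\ref{emb} (the embedding $L^{\mathcal H_h}\hookrightarrow L^{\mathcal H}$ with embedding constant $C_{\mathcal H_h,\mathcal H}\to 1$).

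\textbf{Liminf inequality.} Given $u_h\to u$ in $L^1(\Omega)$, I want to show $\mathscr E_\mathcal H(u)\le \liminf_h \mathscr E_{\mathcal H_h}(u_h)$. Passing to a subsequence realizing the $\liminf$, I may assume it is finite (otherwise the inequality is trivial), and in particular $u_h\in W^{1,\mathcal H_h}_0(\Omega)$ with $\|\nabla u_h\|_{\mathcal H_h}$ bounded. By Lemma~\ref{emb}, this yields $W^{1,\mathcal H_h}_0(\Omega)\hookrightarrow W^{1,\mathcal H}_0(\Omega)$ with
\[
\|\nabla u_h\|_{\mathcal H}\le C_{\mathcal H_h,\mathcal H}\|\nabla u_h\|_{\mathcal H_h},\qquad C_{\mathcal H_h,\mathcal H}\to 1.
\]
Hence $(u_h)$ is bounded in the reflexive space $W^{1,\mathcal H}_0(\Omega)$, so along a subsequence $u_{h_j}\rightharpoonup v$ weakly, and by the compact embedding of Proposition~\ref{sob-poinc}-$(iii)$, $u_{h_j}\to v$ in $L^{\mathcal H}(\Omega)$, a fortiori in $L^1(\Omega)$; consequently $v=u$ and $u\in W^{1,\mathcal H}_0(\Omega)$. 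By weak lower semicontinuity of the norm and the asymptotic behavior of $C_{\mathcal H_h,\mathcal H}$,
\[
\|\nabla u\|_{\mathcal H}\le\liminf_{j\to\infty}\|\nabla u_{h_j}\|_{\mathcal H}\le \liminf_{j\to\infty}C_{\mathcal H_{h_j},\mathcal H}\|\nabla u_{h_j}\|_{\mathcal H_{h_j}}=\liminf_{j\to\infty}\|\nabla u_{h_j}\|_{\mathcal H_{h_j}},
\]
which gives the desired estimate.

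\textbf{Recovery sequence.} I need $u_h\to u$ in $L^1(\Omega)$ with $\limsup_h \mathscr E_{\mathcal H_h}(u_h)\le \mathscr E_\mathcal H(u)$. If $u\notin W^{1,\mathcal H}_0(\Omega)$ the right-hand side is $+\infty$ and any constant sequence works, so assume $u\in W^{1,\mathcal H}_0(\Omega)$. By definition of $W^{1,\mathcal H}_0(\Omega)$, choose $w_k\in C^\infty_0(\Omega)$ with $w_k\to u$ in $W^{1,\mathcal H}_0(\Omega)$, hence $\|\nabla w_k\|_{\mathcal H}\to \|\nabla u\|_{\mathcal H}$ and $w_k\to u$ in $L^1(\Omega)$. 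For each fixed $k$, Lemma~\ref{conver} gives $\|\nabla w_k\|_{\mathcal H_h}\to\|\nabla w_k\|_{\mathcal H}$ as $h\to\infty$. A standard diagonal argument then produces indices $k(h)\to\infty$ such that $u_h:=w_{k(h)}$ satisfies $u_h\to u$ in $L^1(\Omega)$ and
\[
\lim_{h\to\infty}\|\nabla u_h\|_{\mathcal H_h}=\|\nabla u\|_{\mathcal H}=\mathscr E_\mathcal H(u),
\]
which yields the $\limsup$ inequality (indeed an equality).

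\textbf{Main obstacle.} The nontrivial point is the liminf step, where one must pass information about the boundedness of $\|\nabla u_h\|_{\mathcal H_h}$ (a modular that varies with $h$) to boundedness in the fixed space $W^{1,\mathcal H}_0(\Omega)$ in order to extract a weakly convergent subsequence. This is precisely where the quantitative embedding of Lemma~\ref{emb} with $C_{\mathcal H_h,\mathcal H}\to 1$ is crucial: it both gives the compactness needed to identify the weak limit as $u$ and ensures no constant is lost when comparing the two norms in the final lsc estimate. For the recovery sequence the only care required is that the smooth approximation of $u$ is taken in the fixed space $W^{1,\mathcal H}_0(\Omega)$ (rather than in the moving one), so that Lemma~\ref{conver} applies for each fixed approximant before invoking the diagonal procedure.
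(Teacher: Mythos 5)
Your proposal is correct and follows essentially the same route as the paper: the recovery sequence comes from smooth approximation in $W^{1,\mathcal H}_0(\Omega)$ combined with Lemma~\ref{conver}, and the liminf inequality from the quantitative embedding of Lemma~\ref{emb} (with $C_{\mathcal H_h,\mathcal H}\to 1$), weak compactness in $W^{1,\mathcal H}_0(\Omega)$, and lower semicontinuity of the norm. The only cosmetic differences are that you use the sequential characterization of $\Gamma$-limits throughout (the paper mixes the neighborhood definition with Dal Maso's Proposition 8.1) and that you apply Lemma~\ref{emb} directly to the competitors $u_h$, where the paper first replaces them by smooth approximants; both variants are valid.
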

\begin{proof} Let $u\in L^1(\Omega)$. First we prove that 
\begin{equation}\label{glimsup}
\mathscr E_\mathcal H(u)\ge\left(\Gamma-\limsup_{h\to\infty}\mathscr E_{\mathcal H_h}\right)(u).
\end{equation}
If $\mathscr E_\mathcal H(u)=\infty$, \eqref{glimsup} is immediate. Then, we suppose $\mathscr E_\mathcal H(u)<\infty$ and take $b\in\mathbb R$, $b>\mathscr E_\mathcal H(u)$. Let $\delta>0$ and $w\in C^1_0(\Omega)$ be such that $\|u-w\|_1<\delta$, with $\|\nabla w\|_\mathcal H<b$. By Lemma \ref{conver} we get $\|\nabla w\|_{\mathcal H_h}\to\|\nabla w\|_\mathcal H$ and so   
$$b>\mathscr E_\mathcal H(w)=\lim_{h\to\infty}\|\nabla w\|_{\mathcal H_h}=\lim_{h\to\infty}\mathscr E_{\mathcal H_h}(w)$$
and in turn
$$b>\limsup_{h\to\infty}(\inf\{\mathscr E_{\mathcal H_h}(v)\,:\,\|v-u\|_1<\delta\}).$$
By the arbitrariness of $b$ we conclude the proof of \eqref{glimsup}. Now, we want to prove that 
\begin{equation}\label{gliminf}
\mathscr E_\mathcal H(u)\le\left(\Gamma-\liminf_{h\to\infty}\mathscr E_{\mathcal H_h}\right)(u).
\end{equation} 
Suppose that $\left(\Gamma-\liminf_{h\to\infty}\mathscr E_{\mathcal H_h}\right)(u)<\infty$ (otherwise the conclusion is obvious) and take $b\in\mathbb R$, $b>\left(\Gamma-\liminf_{h\to\infty}\mathscr E_{\mathcal H_h}\right)(u)$. By Proposition 8.1-(b) of \cite{dalmaso1993} there is a sequence $(u_h)\subset L^1(\Omega)$ such that $u_h\to u$ in $L^1(\Omega)$ and 
$$\left(\Gamma-\liminf_{h\to\infty}\mathscr E_{\mathcal H_h}\right)(u)=\liminf_{h\to\infty}\mathscr E_{\mathcal H_h}(u_h).$$
Therefore, there exists a subsequence $((p_{h_j},q_{h_j}))$ such that $\mathscr E_{\mathcal H_{h_j}}(u_{h_j})<b$ for all $j$. 
Let $(v_j)\subset C^1_{0}(\Omega)$ be such that 
$$\|v_j-u_{h_j}\|_1<\frac1j,\quad\mathscr E_{\mathcal H_{h_j}}(v_j)<b\quad\mbox{for all }j\in\mathbb N.$$
Then, $v_j\to u$ in $L^1(\Omega)$ and by Lemma \ref{emb},  for $j$ sufficiently large,
\begin{equation}\label{bC}b>\|\nabla v_j\|_{\mathcal H_{h_j}}\ge\frac{\|\nabla v_j\|_\mathcal H}{C_{\mathcal H_{h_j},\mathcal H}}.\end{equation}
Furthermore, 
$$\|\nabla v_j\|_\mathcal H < b (|\Omega|+\|a\|_1+1),$$
thus $(v_j)$ is bounded in the reflexive Banach space $W^{1,\mathcal H}_0(\Omega)$, and so there exists a subsequence $(v_{j_m})$ such that $v_{j_m}\rightharpoonup u$ in $W^{1,\mathcal H}_0(\Omega)$. By \eqref{bC}, by the weak lower semicontinuity of the norm, and by the fact that $C_{\mathcal H_{h_{j_m}},\mathcal H}\to 1$,
$$b\ge\liminf_{m\to\infty}\frac{\|\nabla v_{j_m}\|_\mathcal H}{C_{\mathcal H_{h_{j_m}},\mathcal H}}\ge \|\nabla u\|_\mathcal H=\mathscr E_\mathcal H(u).$$
Thus, \eqref{gliminf} follows by the arbitrariness of $b$.
\end{proof}

\begin{lemma}\label{convergence}
Let $(p_h,q_h)\searrow(p,q)$, 
and $(u_h)\subset W^{1,\mathcal H}_0(\Omega)$ be a sequence such that  
$$\sup_{h\in\mathbb N}\|\nabla u_h\|_{\mathcal H}<\infty.$$
Then, there exist a subsequence $(u_{h_j})$ and a function $u\in W^{1,\mathcal H}_0(\Omega)$ such that
$$\lim_{j\to\infty}\varrho_{\mathcal H_{h_j}}(u_{h_j})=\varrho_\mathcal H(u).$$
\end{lemma}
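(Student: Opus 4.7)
The plan is to extract a subsequence converging strongly in a fixed Lebesgue space $L^r(\Omega)$ whose exponent $r$ strictly dominates all the varying exponents $p_{h_j},q_{h_j}$, and then to apply the dominated convergence theorem to each of the two summands making up $\varrho_{\mathcal H_{h_j}}(u_{h_j})$.

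First, I would invoke the Poincar\'e inequality of Proposition \ref{sob-poinc}-$(iv)$ together with the assumption $\sup_h\|\nabla u_h\|_{\mathcal H}<\infty$ to deduce that $(u_h)$ is bounded in the reflexive space $W^{1,\mathcal H}_0(\Omega)$ (Proposition \ref{reflexivity}); hence, up to a subsequence, $u_h\rightharpoonup u$ weakly in $W^{1,\mathcal H}_0(\Omega)$ for some limit $u$. Next, condition \eqref{cond-pq} forces $q<p(1+1/n)<p^*$ (see Remark \ref{remh'}). Since $q_h\searrow q$, I can fix $h_0$ so that $q_{h_0}<p^*$ and choose $r\in(q_{h_0},p^*)$, so that $p_h<q_h\le q_{h_0}<r<p^*$ whenever $h\ge h_0$. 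By Proposition \ref{embeddingW}-$(iii)$, the embedding $W^{1,\mathcal H}_0(\Omega)\hookrightarrow\hookrightarrow L^r(\Omega)$ is compact, so passing to a further subsequence $(u_{h_j})$ I get $u_{h_j}\to u$ in $L^r(\Omega)$; by the standard extraction principle for $L^r$ convergence I may additionally assume $u_{h_j}(x)\to u(x)$ a.e.\ and $|u_{h_j}(x)|\le w(x)$ a.e.\ for some $w\in L^r(\Omega)$.

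Pointwise a.e., the joint continuity of $(t,s)\mapsto t^s$ on $[0,\infty)\times[1,\infty)$ together with $p_{h_j}\to p$, $q_{h_j}\to q$ and $u_{h_j}\to u$ yields
\[
|u_{h_j}(x)|^{p_{h_j}}+a(x)|u_{h_j}(x)|^{q_{h_j}}\longrightarrow|u(x)|^p+a(x)|u(x)|^q.
\]
For a uniform $L^1$ dominator, the elementary bound $t^s\le 1+t^r$ (valid for $t\ge 0$ and $s\in[1,r]$), combined with $p_{h_j},q_{h_j}\le r$, gives
\[
|u_{h_j}|^{p_{h_j}}+a(x)|u_{h_j}|^{q_{h_j}}\;\le\;(1+\|a\|_\infty)\bigl(1+w(x)^r\bigr),
\]
and the right-hand side belongs to $L^1(\Omega)$ since $w\in L^r(\Omega)$ and $a\in L^\infty(\Omega)$ by \eqref{cond-pq}. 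The dominated convergence theorem then yields $\varrho_{\mathcal H_{h_j}}(u_{h_j})\to\varrho_{\mathcal H}(u)$, as claimed.

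The main obstacle I anticipate is the simultaneous variation of both the integrated function and the exponents in the integrand. Applying Lemma \ref{4domconv} directly in $L^{\mathcal H}$ would only provide a dominator controlling the powers $p$ and $q$, which is insufficient for the larger exponents $p_{h_j},q_{h_j}$ appearing before the limit. The resolution above is precisely to upgrade the compact embedding to one into $L^r$ with $r$ strictly greater than every $q_{h_j}$ for $j$ large, which is available thanks to the double-phase balance $q/p<1+1/n$ that renders $q$ subcritical with respect to the $p$-Sobolev exponent.
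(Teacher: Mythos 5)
Your proposal is correct and follows essentially the same route as the paper's proof: weak convergence from boundedness and reflexivity, compact embedding into a fixed $L^r(\Omega)$ with $q<r<p^*$ (the paper takes $r=p^*-\varepsilon$ with $\varepsilon<(p^*-q)/2$, so that $q_{h_j}\le q+\varepsilon<p^*-\varepsilon$ for $j$ large), extraction of an a.e.\ convergent subsequence with an $L^r$ dominator, and dominated convergence applied to the integrand with varying exponents. The only cosmetic difference is how the dominating exponent $r$ is pinned down relative to the decreasing sequence $q_{h_j}$.
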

\begin{proof} By hypotheses $(u_h)$ is bounded in the reflexive Banach space $W^{1,\mathcal H}_0(\Omega)$, thus there exists a subsequence $(u_{h_j})$ weakly convergent to $u\in W^{1,\mathcal H}_0(\Omega)$. By Proposition \ref{embeddingW}-$(iii)$, $W^{1,\mathcal H}_0(\Omega)\hookrightarrow\hookrightarrow L^{p^*-\varepsilon}(\Omega)$ for $\varepsilon>0$ sufficiently small. Hence, $u_{h_j}\to u$ in $L^{p^*-\varepsilon}(\Omega)$ and, up to a subsequence, 
$$\begin{gathered}u_{h_j}\to u\quad\mbox{a.e. in }\Omega,\\
|u_{h_j}|\le v\quad\mbox{for all }j\in\mathbb N,\mbox{ a.e. in }\Omega 
\end{gathered}$$
for some $v\in L^{p^*-\varepsilon}(\Omega)$.
Now, for $j$ sufficiently large, if we take $\varepsilon<(p^*-q)/2$, we get $$p_{h_j}<q_{h_j}\le q+\varepsilon<p^*-\varepsilon,$$ and consequently 
$$|u_{h_j}|^{p_{h_j}}+a(x)|u_{h_j}|^{q_{h_j}}\le 1+|v|^{p^*-\varepsilon}+\|a\|_\infty(1+|v|^{p^*-\varepsilon})\in L^1(\Omega).$$
Finally, by the dominated convergence theorem, we obtain
$$\lim_{j\to\infty}\int_\Omega\left(|u_{h_j}|^{p_{h_j}}+a(x)|u_{h_j}|^{q_{h_j}}\right)dx=\int_\Omega\left(|u|^p+a(x)|u|^q\right)dx.$$
\end{proof}

\begin{theorem}\label{teo2}
Let $(p_h,q_h)\searrow(p,q)$. Then, for every subsequence $((p_{h_j},q_{h_j}))$ and for every sequence $(u_j)\subset L^1(\Omega)$ verifying
$$\sup_{j\in\mathbb N}\mathscr E_{\mathcal H_{h_j}}(u_j)<\infty$$
there exists a subsequence $(u_{j_m})$ such that, as $m\to\infty$
$$\begin{gathered}
u_{j_m}\to u\quad\mbox{in }L^1(\Omega),\\
g_{\mathcal H_{h_{j_m}}}(u_{j_m})\to g_\mathcal H(u).
\end{gathered}$$
\end{theorem}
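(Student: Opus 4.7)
The plan is to proceed in three stages: (i) upgrade the uniform bound on $\mathscr E_{\mathcal H_{h_j}}(u_j)$ to a uniform bound in the fixed reflexive space $W^{1,\mathcal H}_0(\Omega)$; (ii) extract, via the compact embedding of Proposition~\ref{sob-poinc}-$(iii)$, a subsequence converging a.e.\ and strongly in $L^\mathcal H$ (hence in $L^1$), together with an integrable dominating function; (iii) convert this pointwise/dominated control into Lebesgue-type convergence of the variable modulars $\varrho_{\mathcal H_{h_{j_m}}}(u_{j_m}/\gamma)$ for each scaling $\gamma>0$, and then invoke the unit ball property to get convergence of the Luxemburg norms.

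For (i), set $M:=\sup_j\mathscr E_{\mathcal H_{h_j}}(u_j)$, so $u_j\in W^{1,\mathcal H_{h_j}}_0(\Omega)$ with $\|\nabla u_j\|_{\mathcal H_{h_j}}\le M$. For $j$ large enough one has $p<p_{h_j}<2p$ and $q<q_{h_j}<2q$, and Lemma~\ref{emb} yields $\|\nabla u_j\|_{\mathcal H}\le C_{\mathcal H_{h_j},\mathcal H}\,M$ with $C_{\mathcal H_{h_j},\mathcal H}\to 1$; hence $(u_j)$ is bounded in $W^{1,\mathcal H}_0(\Omega)$, which is reflexive by Proposition~\ref{reflexivity}. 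For (ii), using Propositions~\ref{sob-poinc}-$(iii)$ and~\ref{embeddingW}-$(i),(iii)$, extract a subsequence (still denoted $(u_{j_m})$) with $u_{j_m}\rightharpoonup u$ in $W^{1,\mathcal H}_0(\Omega)$, $u_{j_m}\to u$ in $L^{\mathcal H}(\Omega)$ and in $L^1(\Omega)$, and---after a further extraction as in the proof of Lemma~\ref{convergence}---also $u_{j_m}\to u$ a.e.\ with $|u_{j_m}|\le w$ for some $w\in L^{p^*-\varepsilon}(\Omega)$, where $\varepsilon>0$ is fixed so that $q<p^*-\varepsilon$ (possible by Remark~\ref{remh'}). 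For (iii), fix any $\gamma>0$; once $m$ is large enough that $p_{h_{j_m}},q_{h_{j_m}}<p^*-\varepsilon$,
$$
\big|u_{j_m}/\gamma\big|^{p_{h_{j_m}}}+a(x)\big|u_{j_m}/\gamma\big|^{q_{h_{j_m}}}\le (1+\|a\|_\infty)\big(1+|w/\gamma|^{p^*-\varepsilon}\big)\in L^1(\Omega),
$$
so dominated convergence gives $\varrho_{\mathcal H_{h_{j_m}}}(u_{j_m}/\gamma)\to\varrho_{\mathcal H}(u/\gamma)$ for every $\gamma>0$.

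Setting $\alpha:=\|u\|_\mathcal H=g_\mathcal H(u)$, for any $\gamma>\alpha$ the unit ball property (Proposition~\ref{properties}-$(ii)$) gives $\varrho_{\mathcal H}(u/\gamma)<1$, hence by (iii) $\varrho_{\mathcal H_{h_{j_m}}}(u_{j_m}/\gamma)<1$ eventually, and then, invoking the unit ball property once more, $\|u_{j_m}\|_{\mathcal H_{h_{j_m}}}\le\gamma$; letting $\gamma\downarrow\alpha$ yields $\limsup_m\|u_{j_m}\|_{\mathcal H_{h_{j_m}}}\le\alpha$. The symmetric argument with $0<\gamma<\alpha$ produces $\liminf_m\|u_{j_m}\|_{\mathcal H_{h_{j_m}}}\ge\alpha$, while the degenerate case $\alpha=0$ is handled by the upper bound alone. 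The main subtlety---and indeed the only non-routine point---is the simultaneous $m$-dependence of both the function $u_{j_m}$ and the underlying $N$-function $\mathcal H_{h_{j_m}}$ in $g_{\mathcal H_{h_{j_m}}}(u_{j_m})$: one cannot appeal to a single fixed norm, and the unit ball property is precisely the tool that decouples the two dependencies, testing against each scale $\gamma$ separately to translate modular comparisons into norm comparisons.
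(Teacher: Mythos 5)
Your proposal is correct and follows essentially the same route as the paper: uniform boundedness in $W^{1,\mathcal H}_0(\Omega)$ via Lemma~\ref{emb}, extraction of a subsequence converging in $L^1(\Omega)$ and a.e.\ with an $L^{p^*-\varepsilon}$ dominating function, and conversion of modular comparisons into norm comparisons at each scale $\gamma$ through the unit ball property. The only (harmless) difference is organizational: you establish dominated convergence of the modulars $\varrho_{\mathcal H_{h_{j_m}}}(u_{j_m}/\gamma)$ at every scale along a single subsequence and then read off both the $\limsup$ and $\liminf$ bounds, whereas the paper obtains the $\liminf$ bound via Fatou and the $\limsup$ bound via Lemma~\ref{convergence} with further extractions.
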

\begin{proof} Put $b:=\sup_{j\in\mathbb N}\mathscr E_{\mathcal H_{h_j}}(u_j)$ and for all $j$ we get by Lemma \ref{emb}
$$\|\nabla u_j\|_\mathcal H\le(|\Omega|+\|a\|_1+1)b<\infty,$$ 
that is $(u_j)$ is bounded in the reflexive Banach space $W^{1,\mathcal H}_0(\Omega)$. Thus, there exists a subsequence 
$(u_{j_m})$ such that $u_{j_m}\rightharpoonup u$ in $W^{1,\mathcal H}_0(\Omega)$, $u_{j_m}\to u$ in $L^1(\Omega)$ by Lemma \ref{embeddingW}-$(iii)$, and $u_{j_m}\to u$ a.e. in $\Omega$.

\noindent 
For the second part of the statement we have to prove that $\|u_{j_m}\|_{\mathcal H_{h_{j_m}}}\to\|u\|_\mathcal H$ up to a subsequence. 
In correspondence of $$\gamma>\liminf_{m}\|u_{j_m}\|_{\mathcal H_{h_{j_m}}},$$ we can find a subsequence, still denoted by $((p_{h_{j_m}},q_{h_{j_m}}))$, for which $\|u_{j_m}\|_{\mathcal H_{h_{j_m}}}<\gamma$ for all $m$. Therefore, $\varrho_{\mathcal H_{h_{j_m}}}(u_{j_m}/\gamma)<1$ and by Fatou's lemma
$$\int_\Omega\left(\left|\frac{u}\gamma\right|^p+a(x)\left|\dfrac{u}\gamma\right|^q\right)dx\le \liminf_{m\to\infty}\int_\Omega\left(\left|\dfrac{u_{j_m}}\gamma\right|^{p_{h_{j_m}}}+a(x)\left|\dfrac{u_{j_m}}\gamma\right|^{q_{h_{j_m}}}\right)dx\le1.$$
By the unit ball property $\|u\|_\mathcal H\le\gamma$ and by the arbitrariness of $\gamma$ we conclude that $$\|u\|_\mathcal H\le\liminf_m\|u_{j_m}\|_{\mathcal H_{h_{j_m}}}.$$ 
It remains to prove that $\|u\|_\mathcal H\ge\limsup_{m}\|u_{j_m}\|_{\mathcal H_{h_{j_m}}}$. Now, in correspondence of $\gamma<\limsup_{m}\|u_{j_m}\|_{\mathcal H_{h_{j_m}}}$, we can find a subsequence, still denoted by $((p_{h_{j_m}},q_{h_{j_m}}))$, for which $$\|u_{j_m}\|_{\mathcal H_{h_{j_m}}}>\gamma$$ and consequently $\varrho_{\mathcal H_{h_{j_m}}}(u_{j_m}/\gamma)>1$ for all $m$. By Lemma \ref{convergence}, up to a subsequence
$$\int_\Omega\left(\left|\frac{u}{\gamma}\right|^p+a(x)\left|\frac{u}{\gamma}\right|^q\right)dx=\lim_{m\to\infty}\int_\Omega\left(\left|\frac{u_{j_m}}{\gamma}\right|^{p_{h_{j_m}}}+a(x)\left|\frac{u_{j_m}}{\gamma}\right|^{q_{h_{j_m}}}\right)dx\ge1.$$ 
We conclude the proof by using the unit ball property and the arbitrariness of $\gamma$ as before. 
\end{proof}
\smallskip 

\noindent 
We are now ready to prove Theorem \ref{main2}.\smallskip

\noindent
$\bullet${\it Proof of Theorem \ref{main2}}. By Proposition \ref{g} and by the definition of $\mathscr E_\mathcal H$, the functionals $\mathscr E_\mathcal H$, $\mathscr E_{\mathcal H_h}$, $g_\mathcal H$ and $g_{\mathcal H_h}$ for all $h\in\mathbb N$ satisfy the structural assumptions required in Section 4 of \cite{Franzi}. Furthermore, Theorems~\ref{gammaconv} and \ref{teo2} prove that all hypotheses of Corollary 4.4 of \cite{Franzi} are verified and consequently 
$$\lim_{h\to\infty}\inf_{K\in\mathcal L^m_{\mathcal H_h}}\sup_{u\in K}\mathscr E_{\mathcal H_h}=\inf_{K\in\mathcal L^m_{\mathcal H}}\sup_{u\in K}\mathscr E_{\mathcal H},$$
where 
$$\mathcal L^m_\mathcal H:=\{K\subset L^1(\Omega)\cap\{\|u\|_\mathcal H=1\}\,:\,K\mbox{ compact, }K=-K,\, i(K)\ge m\}\quad\mbox{for all }m\in\mathbb N$$
(here $i$ is defined with respect to the $L^1(\Omega)$-topology), and the sets $\mathcal L^m_{\mathcal H_h}$ are defined analogously for all $h\in\mathbb N$.
Finally, by virtue of Proposition \ref{g} $(b)$, we can apply Corollary 3.3 of \cite{Franzi} to get that the minimax values with respect to the $L^1(\Omega)$-topology are the same as those with respect to the $W^{1,\mathcal H}_0(\Omega)$-topology and conclude the proof. \hfill$\Box$

\section{A Weyl-type law}\label{sec6}
\noindent
Throughout this section we assume that condition \eqref{cond-pq} is verified and that $\Omega$ is quasi-convex.
\begin{lemma}\label{le1}
For all $u\in W^{1,q}(\Omega)\setminus\{0\}$ we have
$$\frac1{w}\cdot\frac{\|\nabla u\|_p}{\|u\|_q}\le \frac{\|\nabla u\|_\mathcal H}{\|u\|_\mathcal H}\le w\frac{\|\nabla u\|_q}{\|u\|_p},$$
where $w:=1+\|a\|_\infty+|\Omega|$.
\end{lemma}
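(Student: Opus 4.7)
\textbf{Plan for Lemma~\ref{le1}.} The strategy is to deduce both inequalities from two elementary norm comparisons:
\begin{equation*}
\|u\|_p\le \|u\|_{\mathcal H} \quad\text{and}\quad \|u\|_{\mathcal H}\le w\,\|u\|_q,
\end{equation*}
valid for every $u$ in the respective spaces. Once these are established, the lemma follows by combining them with the analogous estimates applied to $\nabla u$: indeed, writing $\|\nabla u\|_p\le \|\nabla u\|_{\mathcal H}$ and $1/\|u\|_{\mathcal H}\ge 1/(w\|u\|_q)$ yields the lower bound, while $\|\nabla u\|_{\mathcal H}\le w\|\nabla u\|_q$ together with $1/\|u\|_{\mathcal H}\le 1/\|u\|_p$ yields the upper bound.

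\textbf{First comparison.} For any $u\in L^{\mathcal H}(\Omega)\setminus\{0\}$, the unit ball property (Proposition~\ref{properties}-$(ii)$) gives $\varrho_{\mathcal H}(u/\|u\|_{\mathcal H})= 1$. Dropping the nonnegative $a$-term in the modular yields
\begin{equation*}
\int_\Omega \left(\frac{|u|}{\|u\|_{\mathcal H}}\right)^p dx \le 1,
\end{equation*}
which is exactly $\|u\|_p\le \|u\|_{\mathcal H}$. This is nothing but the embedding of Proposition~\ref{embeddingW}-$(i)$ with constant~$1$.

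\textbf{Second comparison.} Here the Lipschitz continuity of $a$ is used only through $\|a\|_\infty<\infty$. Set $\gamma:=w\|u\|_q$ and estimate the modular using H\"older's inequality on the $p$-term:
\begin{equation*}
\varrho_{\mathcal H}\!\left(\frac{u}{\gamma}\right)=\gamma^{-p}\|u\|_p^p+\gamma^{-q}\int_\Omega a(x)|u|^q\,dx\le \gamma^{-p}|\Omega|^{1-p/q}\|u\|_q^p+\gamma^{-q}\|a\|_\infty\|u\|_q^q.
\end{equation*}
Substituting $\gamma=w\|u\|_q$ and using $w\ge 1$ (so that $w^{-p},w^{-q}\le w^{-1}$), together with the elementary bound $|\Omega|^{1-p/q}\le 1+|\Omega|$ (valid since $1-p/q\in(0,1)$), one obtains
\begin{equation*}
\varrho_{\mathcal H}\!\left(\frac{u}{\gamma}\right)\le w^{-1}\bigl(|\Omega|^{1-p/q}+\|a\|_\infty\bigr)\le w^{-1}\bigl(1+|\Omega|+\|a\|_\infty\bigr)=1.
\end{equation*}
Again by the unit ball property, $\|u\|_{\mathcal H}\le \gamma = w\|u\|_q$.

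\textbf{Conclusion.} Applying both comparisons to $u$ and to $\nabla u$ (which lies in $L^q(\Omega)$ by assumption, hence in $L^{\mathcal H}(\Omega)$ by Proposition~\ref{embeddingW}-$(v)$) gives
\begin{equation*}
\frac{\|\nabla u\|_{\mathcal H}}{\|u\|_{\mathcal H}}\ge \frac{\|\nabla u\|_p}{w\|u\|_q}
\quad\text{and}\quad
\frac{\|\nabla u\|_{\mathcal H}}{\|u\|_{\mathcal H}}\le \frac{w\|\nabla u\|_q}{\|u\|_p},
\end{equation*}
which is the claim. I do not foresee any real obstacle: the proof is essentially a bookkeeping exercise matching the constant $w=1+\|a\|_\infty+|\Omega|$ to the modular inequality, with the only mild care needed in handling the two cases $|\Omega|\lessgtr 1$ when bounding $|\Omega|^{1-p/q}$.
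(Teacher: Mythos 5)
Your proof is correct and follows essentially the same route as the paper: both reduce the lemma to the two norm comparisons $\|u\|_p\le\|u\|_{\mathcal H}\le w\|u\|_q$ via the modular and the unit ball property, and then apply them to $u$ and $\nabla u$. The only (immaterial) difference is in the upper bound, where the paper uses the pointwise inequality $t^p\le 1+t^q$ to get $\varrho_{\mathcal H}(u/\|u\|_q)\le w$ and then rescales by convexity, whereas you use H\"older's inequality on the $p$-term and test directly with $\gamma=w\|u\|_q$.
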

\begin{proof}
For all $u\in L^\mathcal H(\Omega)$
$$\int_\Omega|u|^pdx\le\int_\Omega(|u|^p+a(x)|u|^q)dx=\varrho_\mathcal H(u).$$
Hence, 
$\varrho_\mathcal H(u/\|u\|_p)\ge1$ for $u\neq0$,
and by the unit ball property
$\|u\|_\mathcal H\ge\|u\|_p$.
On the other hand, for all $u\in L^q(\Omega)$
$$\varrho_\mathcal H(u)=\int_\Omega(|u|^p+a(x)|u|^q)dx\le\int_\Omega[1+(1+\|a\|_\infty)|u|^q] dx=|\Omega|+(1+\|a\|_\infty)\|u\|_q^q$$
and so, if $u\neq 0$,
$$\varrho_\mathcal H\left(\frac{u}{\|u\|_q}\right)\le 1+\|a\|_\infty+|\Omega|,$$
whence, by the unit ball property and being $1+\|a\|_\infty+|\Omega|\ge1$,
$$\|u\|_\mathcal H\le(1+\|a\|_\infty+|\Omega|)\|u\|_q.$$
Hence, for all $u\in L^q(\Omega)$
$$\|u\|_p\le\|u\|_\mathcal H\le(1+\|a\|_\infty+|\Omega|)\|u\|_q,$$
which gives the thesis. 
\end{proof}

\begin{lemma}\label{le3}
Let $0<\delta<1$, consider the homothety $\Omega\to\delta\Omega$, $x\mapsto\delta x=:y$, and write $u(x)=v(y)$. Then, for all $u\in W^{1,q}(\Omega)\setminus\{0\}$
$$\frac{\|\nabla v\|_q}{\|v\|_p}=\delta^{-\sigma-1}\frac{\|\nabla u\|_q}{\|u\|_p},\qquad\frac{\|\nabla v\|_p}{\|v\|_q}=\delta^{\sigma-1}\frac{\|\nabla u\|_p}{\|u\|_q},$$
where $\sigma:=n\left(\dfrac1p-\dfrac1q\right)$.
\end{lemma}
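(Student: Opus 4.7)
\textbf{Proof plan for Lemma \ref{le3}.}
The approach is a straightforward change of variables under the homothety $y=\delta x$. First I would record the pointwise relation between $u$ and $v$. Since $v(\delta x)=u(x)$, one has $v(y)=u(y/\delta)$ on $\delta\Omega$, and the chain rule gives $\nabla_y v(y)=\delta^{-1}(\nabla_x u)(y/\delta)$ almost everywhere in $\delta\Omega$.

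Next I would establish the two basic scaling identities for the Lebesgue norms of $v$ and $\nabla v$. Applying the substitution $y=\delta x$ with Jacobian $\delta^n$, one obtains
\begin{equation*}
\int_{\delta\Omega}|v(y)|^r\,dy=\delta^{n}\int_{\Omega}|u(x)|^r\,dx,
\qquad
\int_{\delta\Omega}|\nabla v(y)|^r\,dy=\delta^{n-r}\int_{\Omega}|\nabla u(x)|^r\,dx,
\end{equation*}
for every $r\in\{p,q\}$. Taking $r$-th roots yields
\begin{equation*}
\|v\|_{r}=\delta^{n/r}\|u\|_{r},\qquad \|\nabla v\|_{r}=\delta^{n/r-1}\|\nabla u\|_{r}.
\end{equation*}

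Finally, I would form the two quotients and simplify the exponents of $\delta$ using the definition $\sigma=n(1/p-1/q)$. The quotient $\|\nabla v\|_q/\|v\|_p$ picks up the factor $\delta^{n/q-1-n/p}=\delta^{-\sigma-1}$, while $\|\nabla v\|_p/\|v\|_q$ picks up $\delta^{n/p-1-n/q}=\delta^{\sigma-1}$, giving exactly the two identities in the statement.

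There is no substantial obstacle; the lemma is pure bookkeeping of scaling exponents. The only point that needs attention is to match the sign conventions so that the two relations $n/q-n/p=-\sigma$ and $n/p-n/q=\sigma$ produce the announced powers of $\delta$.
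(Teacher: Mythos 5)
Your computation is correct and is exactly the ``straightforward calculation'' the paper alludes to (the paper omits the proof entirely): the chain rule gives $\nabla_y v(y)=\delta^{-1}(\nabla u)(y/\delta)$, the substitution $y=\delta x$ gives $\|v\|_r=\delta^{n/r}\|u\|_r$ and $\|\nabla v\|_r=\delta^{n/r-1}\|\nabla u\|_r$, and the exponents $n/q-n/p=-\sigma$, $n/p-n/q=\sigma$ produce the stated powers of $\delta$. No gaps.
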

\noindent The proof of the previous lemma follows by straightforward calculations. Furthermore, we notice that $\sigma\in(0,1)$, being $p<q<p^*$. 

\smallskip

\noindent
We introduce now the auxiliary problem in $W^{1,\mathcal H}(\Omega)$
$$\begin{aligned}-\mathrm{div}&\left(\left[p\left(\frac{|\nabla u|}{L(u)}\right)^{p-2}+qa(x)\left(\frac{|\nabla u|}{L(u)}\right)^{q-2}\right]\frac{\nabla u}{L(u)}\right)\\
&\hspace{3cm}=\lambda T(u) \left[p\left(\frac{|u|}{\ell(u)}\right)^{p-2}+qa(x)\left(\frac{|u|}{\ell(u)}\right)^{q-2}\right]\frac u{\ell(u)},\end{aligned}$$
where 
$L(u):=\|\nabla u\|_\mathcal H$, $\ell(u):=\|u\|_\mathcal H$, and $$T(u):=\frac{\displaystyle\int_\Omega\left[p\left(\frac{|\nabla u|}{L(u)}\right)^p+qa(x)\left(\frac{|\nabla u|}{L(u)}\right)^q\right]dx}{\displaystyle\int_\Omega\left[p\left(\frac{|u|}{\ell(u)}\right)^p+qa(x)\left(\frac{|u|}{\ell(u)}\right)^q\right]dx}.$$
The eigenvalues and eigenfunctions of this problem on 
$$\mathcal N:=\{u\in W^{1,\mathcal H}(\Omega)\,:\,\ell(u)=1\}$$
are critical values and critical points of $\widetilde{L}:=L|_\mathcal N$. 
Furthermore, we set for every $\lambda,\,\mu\in\mathbb R$
$$
\begin{aligned}&\widetilde{K}^\lambda:=\{u\in\mathcal M\,:\,\widetilde{K}(u)<\lambda\},\quad\widetilde{L}^\mu:=\{u\in\mathcal N\,:\,\widetilde{L}(u)<\mu\},\\
&\widehat{K}(u):=\|\nabla u\|_q\quad\mbox{for }u\in\widehat{\mathcal M}:=\{W^{1,q}_0(\Omega)\,:\,\|u\|_p=1\},\\
&\widehat{L}(u):=\|\nabla u\|_p\quad\mbox{for }u\in\widehat{\mathcal N}:=\{u\in W^{1,q}(\Omega)\,:\,\|u\|_q=1\},\\
&\widehat{K}^\lambda:=\{u\in\widehat{\mathcal M}\,:\,\widehat{K}(u)<\lambda\}\quad\mbox{and}\quad\widehat{L}^\mu:=\{u\in\widehat{\mathcal N}\,:\,\widehat{L}(u)<\mu\}.
\end{aligned}$$
For all $\lambda\in(\lambda^m_\mathcal H,\lambda^{m+1}_\mathcal H]$,  $i(\widetilde{K}^{\lambda})=m$, (see Proposition 3.53 of \cite{PAORbook} and \cite{MR1017063}). Therefore,  
\begin{equation}\label{count}i(\widetilde{K}^\lambda)=\sharp\{m:\lambda^m_\mathcal H<\lambda\}\quad\mbox{for all }\lambda\in\mathbb R.\end{equation}

\noindent 
We recall here the definitions of two topological invariants of symmetric sets which will be useful in the next proofs. 

\begin{definition}
{\rm For every nonempty and symmetric subset $A$ of a Banach space $X$, its {\it cogenus} is defined by
\begin{equation}
\label{cogenus}
\overline{\gamma}(A)=\inf\left\{k\in\mathbb{N}\, :\, \exists \mbox{ a continuous odd map } f:\mathbb{S}^{k-1}\to A \right\},
\end{equation}
with the convention that $\overline{\gamma}(A):=+\infty$, if no such an integer $k$ exists. 
}
\end{definition}

\begin{definition}[ cf. \cite{fadell_rabinowitz1978}]
{\rm For every closed symmetric subset $A$ of a Banach space $X$, we define the quotient space $\overline{A}:=A/\mathbb Z_2$ with each $u$ and $-u$ identified. Let $F:\overline{A}\to\mathbb R P^\infty$ be the classifying map of $\overline{A}$ towards the infinite-dimensional
projective space, which induces a homomorphism of the Alexander-Spanier cohomology rings $f^* :H^*(\mathbb R P^\infty) \to H^*(\overline{A})$. One
can identify $H^*(\mathbb R P^\infty)$ with the polynomial ring $Z_2[\omega]$ on a single generator $\omega$. Finally, we define the
cohomological index of $A$
$$g(A):=\begin{cases}\sup\{k\in\mathbb N\,:\,f^*(\omega^{m-1})\neq 0\},\quad&\mbox{if }A\neq\emptyset,\\
0,\quad&\mbox{if }A=\emptyset.\end{cases}$$}
\end{definition}

\noindent 
In what follows the topological index $i$ used in definition \eqref{mth-eigen} will be denoted by $\gamma$, when it stands for the Krasnosel'ski\u{\i} genus, and by $g$, when it stands for the $\mathbb Z_2$-cohomological index of Fadell and Rabinowitz. \smallskip

\noindent
By the monotonicity -- property ($i_2$) -- and the supervariance -- property ($i_3$) -- of the cohomological index, and by the fact that $g(\mathbb S^{k-1})=k$, for any symmetric subset $A$ of a Banach space $X$, with finite genus, cogenus and cohomological index, it results 
\begin{equation}\label{gammag}\overline{\gamma}(A)\le g(A)\le\gamma(A).\end{equation} 

\begin{proposition}\label{density} $C^\infty(\overline{\Omega})$ is dense in $W^{1,\mathcal H}(\Omega)$.
\end{proposition}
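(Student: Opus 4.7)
The plan is to follow the classical Meyers--Serrin strategy, adapting the translation and convolution steps to the Musielak--Orlicz modular. First, I would use the Lipschitz regularity of $\partial\Omega$ to cover $\overline{\Omega}$ by finitely many open sets $U_0\Subset\Omega$ and $U_1,\dots,U_N$, where each $U_j$ with $j\ge1$ admits a bi-Lipschitz chart flattening $\partial\Omega\cap U_j$ to a piece of the hyperplane $\{x_n=0\}$. Let $\{\eta_j\}$ be a smooth partition of unity subordinate to $\{U_j\}$, and split $u=\sum_j u\eta_j$; it suffices to approximate each $u_j:=u\eta_j\in W^{1,\mathcal H}(\Omega)$ in $W^{1,\mathcal H}(\Omega)$ by functions in $C^\infty(\overline{\Omega})$.

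For the interior piece $u_0$, which is compactly supported in $\Omega$, a standard mollifier $\rho_\eps$ produces $\rho_\eps\ast u_0\in C^\infty_c(\Omega)\subset C^\infty(\overline{\Omega})$. For a boundary piece $u_j$ with $j\ge1$, after flattening I would translate inward, setting $u_j^t(x):=u_j(x+te_n)$ for $t>0$, and then convolve with $\rho_\eps$ for $\eps<t/2$; the resulting function is smooth on an open neighborhood of $\overline{U_j\cap\Omega}$, so its restriction to $\Omega$ lies in $C^\infty(\overline{\Omega})$.

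The analytic heart of the argument is to establish the modular convergences
\begin{equation*}
\lim_{t\to 0^+}\bigl[\varrho_\mathcal H(u_j^t-u_j)+\varrho_\mathcal H(\nabla u_j^t-\nabla u_j)\bigr]=0,
\end{equation*}
\begin{equation*}
\lim_{\eps\to 0^+}\bigl[\varrho_\mathcal H(\rho_\eps\ast v-v)+\varrho_\mathcal H(\nabla(\rho_\eps\ast v)-\nabla v)\bigr]=0,
\end{equation*}
which, by the $(\Delta_2)$-condition and Proposition~2.1.11 of \cite{Base}, promote to convergence of the $L^\mathcal H$-norms. For the convolution, Jensen's inequality, a change of variables $z=x-y$, and the Lipschitz estimate $a(z+y)\le a(z)+c_a|y|$ yield
\begin{equation*}
\varrho_\mathcal H(\rho_\eps\ast v)\le\varrho_\mathcal H(v)+c_a\,\eps\int_\Omega|v|^q\,dz,
\end{equation*}
where the extra integral is finite, since $u\in W^{1,\mathcal H}(\Omega)\hookrightarrow W^{1,p}(\Omega)\hookrightarrow L^{p^*}(\Omega)$ and $q<p^*$ by Remark~\ref{remh'}. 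Combining this with the classical $L^p$-mollification argument and passing to the difference $\rho_\eps\ast v-v$ gives modular convergence of the convolution; a similar computation handles the translation.

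The main obstacle is precisely the $x$-dependence of $\mathcal H$ through the coefficient $a(x)$: the modular does not commute with convolution, and without some spatial regularity of $a$ the standard mollification estimate fails. The Lipschitz continuity of $a$ together with the subcriticality $q<p^*$ (both contained in \eqref{cond-pq}) are exactly what control the spatial error at order $\eps$, which vanishes in the limit. Once these two modular convergences are proved, the partition-of-unity assembly is routine.
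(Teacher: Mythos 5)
Your argument is correct in outline, but it takes a genuinely different route from the paper. The paper's proof is essentially a two-line reduction: it invokes Proposition~4.1 and Theorem~5.5 of \cite{HHK} to extend $\mathcal H$ and $u$ to all of $\R^n$ (using the quasi-convexity of $\Omega$ assumed throughout Section~\ref{sec6}) and then restricts globally smooth approximants back to $\Omega$; all the analytic work is outsourced to the extension and global density theorems for generalized Orlicz spaces. You instead give a self-contained Meyers--Serrin argument (partition of unity, inward translation near the Lipschitz boundary, mollification), with the $x$-dependence of $\mathcal H$ handled by the Lipschitz bound $a(z+y)\le a(z)+c_a|y|$ and the finiteness of $\int_\Omega|v|^q$ via $q<p^*$. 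This buys independence from the (at the time unpublished) results of \cite{HHK} and isolates exactly which hypotheses of \eqref{cond-pq} are used, at the cost of more bookkeeping: in particular, flattening the boundary forces you to check that $W^{1,\mathcal H}$ is stable under bi-Lipschitz changes of variable (it is, since $\mathcal H(x,\|M\|t)\le\max\{\|M\|^p,\|M\|^q\}\,\mathcal H(x,t)$, but you should say so, or better, translate along the cone direction of the Lipschitz graph in the original coordinates and avoid the chart altogether). The one step you should not leave implicit is the passage from the one-sided modular bound $\varrho_\mathcal H(\rho_\eps\ast v)\le\varrho_\mathcal H(v)+c_a\eps\int_\Omega|v|^q$ to $\varrho_\mathcal H(\rho_\eps\ast v-v)\to 0$: since $\varrho_\mathcal H$ is not a norm, this requires the usual three-term splitting through a function $w$ continuous with compact support, the density of such $w$ in $L^\mathcal H$ (which holds by $(\Delta_2)$), and the $(\Delta_2)$-condition to absorb the sum; your phrase ``the classical $L^p$-mollification argument'' gestures at this but the quantifiers matter here, and the same remark applies to the translation step.
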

\begin{proof} By Proposition 4.1 of \cite{HHK} there exists a suitable extension $\tilde{\mathcal H}$ of $\mathcal H$ to all of $\mathbb R^n\times[0,\infty)$. For $u\in W^{1,\mathcal H}(\Omega)$, let $\tilde u\in W^{1,\tilde{\mathcal H}}(\mathbb R^n)$ denote an extension of $u$ such that
$$
\|\tilde u\|_{W^{1,\tilde{\mathcal H}}(\mathbb R^n)}\le c\|u\|_{W^{1,\mathcal H}(\Omega)}.
$$
By Theorem~5.5 of \cite{HHK}, $C^\infty_0(\mathbb R^n)$ is dense in $W^{1,\tilde{\mathcal H}}(\mathbb R^n)$, so we can find $(\tilde{u}_h)\subset C^\infty_0(\mathbb R^n)$ such that $\tilde{u}_h\to\tilde u$ in $W^{1,\tilde{\mathcal H}}(\mathbb R^n)$. Therefore, put $u_h:=\tilde{u}_h\big|_\Omega$ for all $h$ and have
$$\|u-u_h\|_{W^{1,\mathcal H}(\Omega)}\le\|\tilde u-\tilde u_h\|_{W^{1,\tilde{\mathcal H}}(\mathbb R^n)}\to 0 \quad\mbox{ as }h\to\infty.$$
This proves that $u_h$ are the required approximating functions. 
\end{proof}
\noindent 
For an alternative proof of the previous result see also Theorem 2.6 of \cite{thai}.

\begin{lemma}\label{le2} For all $\lambda\in\mathbb R$
$$\overline{\gamma}(\widehat{K}^{\lambda/w})\le\overline{\gamma}(\widetilde{K}^\lambda),\qquad\gamma(\widetilde{L}^\lambda)\le\gamma(\widehat{L}^{w\lambda}).$$
\end{lemma}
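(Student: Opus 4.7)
The proof constructs explicit continuous odd maps between the relevant sublevel sets, together with the standard monotonicity of the topological indices: an odd continuous $\phi:A\to B$ yields both $\overline{\gamma}(A)\le\overline{\gamma}(B)$ and $\gamma(A)\le\gamma(B)$.

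\emph{First inequality.} From \eqref{cond-pq}, $a\in L^\infty(\Omega)$, so Proposition~\ref{embeddingW}$(v)$ gives $W^{1,q}_0(\Omega)\hookrightarrow W^{1,\mathcal H}_0(\Omega)$. In particular, every $u\in\widehat{K}^{\lambda/w}$ is a nonzero element of $W^{1,\mathcal H}_0(\Omega)$ (since $\|u\|_p=1$), and the renormalization
\[
\phi(u):=\frac{u}{\|u\|_{\mathcal H}}
\]
defines a continuous odd map $\widehat{K}^{\lambda/w}\to\mathcal M$. Using the right-hand side of Lemma~\ref{le1} and the constraints $\|u\|_p=1$, $\|\nabla u\|_q<\lambda/w$,
\[
\|\nabla\phi(u)\|_{\mathcal H}=\frac{\|\nabla u\|_{\mathcal H}}{\|u\|_{\mathcal H}}\le w\,\frac{\|\nabla u\|_q}{\|u\|_p}=w\|\nabla u\|_q<\lambda,
\]
so $\phi(\widehat{K}^{\lambda/w})\subset\widetilde{K}^\lambda$, whence $\overline{\gamma}(\widehat{K}^{\lambda/w})\le\overline{\gamma}(\widetilde{K}^\lambda)$.

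\emph{Second inequality.} The analogous direct normalization $u\mapsto u/\|u\|_q$ on $\widetilde{L}^\lambda$ is obstructed by the fact that $W^{1,\mathcal H}(\Omega)\not\subset W^{1,q}(\Omega)$. I would instead work on compact symmetric subsets (the relevant setting for the variational characterisation of the eigenvalues, in which $\gamma(\widetilde{L}^\lambda)$ is the supremum of $\gamma(K)$ over compact symmetric $K\subset\widetilde{L}^\lambda$), and prove $\gamma(K)\le\gamma(\widehat{L}^{w\lambda})$ for each such $K$. By compactness and continuity of $\|\nabla\cdot\|_{\mathcal H}$, there exists $\lambda_0\in(0,\lambda)$ with $\|\nabla u\|_{\mathcal H}\le\lambda_0$ on $K$. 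Invoking density of $C^\infty(\overline\Omega)$ in $W^{1,\mathcal H}(\Omega)$ (Proposition~\ref{density}) together with total boundedness of $K$, a partition-of-unity Schauder approximation, odd-symmetrized via $P(u):=\tfrac12(P_0(u)-P_0(-u))$, produces for every $\epsilon>0$ a continuous odd map
\[
P:K\to C^\infty(\overline\Omega)\subset W^{1,q}(\Omega),\qquad\|P(u)-u\|_{W^{1,\mathcal H}}<\epsilon \ \text{on}\ K.
\]
Choosing $\epsilon<(\lambda-\lambda_0)/(1+\lambda)$, one has $\|P(u)\|_{\mathcal H}\ge 1-\epsilon>0$, hence $\|P(u)\|_q>0$ ($P(u)$ being nonzero and smooth), and
\[
\frac{\|\nabla P(u)\|_{\mathcal H}}{\|P(u)\|_{\mathcal H}}\le\frac{\lambda_0+\epsilon}{1-\epsilon}<\lambda\qquad\text{on}\ K.
\]
Set $\Psi(u):=P(u)/\|P(u)\|_q$, which is continuous, odd, and satisfies $\|\Psi(u)\|_q=1$. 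Applying the left-hand side of Lemma~\ref{le1} to $P(u)\in W^{1,q}(\Omega)$ gives
\[
\|\nabla\Psi(u)\|_p=\frac{\|\nabla P(u)\|_p}{\|P(u)\|_q}\le w\,\frac{\|\nabla P(u)\|_{\mathcal H}}{\|P(u)\|_{\mathcal H}}<w\lambda,
\]
so $\Psi(K)\subset\widehat{L}^{w\lambda}$, and consequently $\gamma(K)\le\gamma(\widehat{L}^{w\lambda})$.

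The substantive difficulty lies entirely in the second inequality: one must pass to compact symmetric subsets and construct a symmetric Schauder-type approximation by smooth (hence $W^{1,q}$-regular) functions that respects the open constraint $\|\nabla\cdot\|_{\mathcal H}<\lambda$ up to a controlled, uniform loss---this is what the gap $\lambda-\lambda_0>0$ is for. The first inequality is, by contrast, essentially immediate once one recognises that $W^{1,q}_0(\Omega)$ embeds continuously in $W^{1,\mathcal H}_0(\Omega)$ and one applies Lemma~\ref{le1} directly.
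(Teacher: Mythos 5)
Your first inequality is exactly the paper's argument: the renormalization $u\mapsto u/\|u\|_{\mathcal H}$ is an odd continuous map $\widehat{K}^{\lambda/w}\to\widetilde{K}^\lambda$ thanks to Proposition~\ref{embeddingW}-$(v)$ and Lemma~\ref{le1}, and the cogenus is monotone under odd continuous maps. No issue there.

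The second inequality is where your proposal has a genuine gap. The compact-set argument itself (symmetrized Schauder approximation by smooth functions, with the margin $\lambda-\lambda_0$ absorbing the approximation error) is sound and does establish $\gamma(K)\le\gamma(\widehat{L}^{w\lambda})$ for every compact symmetric $K\subset\widetilde{L}^\lambda$. But the lemma asserts $\gamma(\widetilde{L}^\lambda)\le\gamma(\widehat{L}^{w\lambda})$ for the Krasnosel'ski\u{\i} genus of the open sublevel set itself, as defined in \eqref{krasno}, and you replace $\gamma(\widetilde{L}^\lambda)$ by $\sup\{\gamma(K):K\subset\widetilde{L}^\lambda\mbox{ compact symmetric}\}$ without justification. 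Monotonicity only gives $\sup_K\gamma(K)\le\gamma(\widetilde{L}^\lambda)$, which is the wrong direction; the reverse identity for open symmetric subsets of a Banach space is a nontrivial claim that would itself require proof. The gap is material, because the Weyl-law proof applies the indices to the open sets directly, in the chain $g(\widetilde{L}^{w\lambda})\le\gamma(\widetilde{L}^{w\lambda})\le\gamma(\widehat{L}^{w^2\lambda})$, so a statement valid only for compact subsets does not suffice as a drop-in replacement.

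The paper resolves the same difficulty by a different device: it restricts the normalization $u\mapsto u/\|u\|_q$ to $\widetilde{L}^\lambda\cap W^{1,q}(\Omega)$, which lands in $\widehat{L}^{w\lambda}$ by Lemma~\ref{le1}, and then uses the density of $W^{1,q}(\Omega)$ in $W^{1,\mathcal H}(\Omega)$ (Proposition~\ref{density} combined with Proposition~\ref{embeddingW}-$(v)$) together with Theorem 17 of \cite{palais} to conclude that the inclusion $\widetilde{L}^\lambda\cap W^{1,q}(\Omega)\subset\widetilde{L}^\lambda$ of open sets is a homotopy equivalence; this furnishes an odd continuous map defined on \emph{all} of $\widetilde{L}^\lambda$ into $\widetilde{L}^\lambda\cap W^{1,q}(\Omega)$, whence $\gamma(\widetilde{L}^\lambda)\le\gamma(\widetilde{L}^\lambda\cap W^{1,q}(\Omega))\le\gamma(\widehat{L}^{w\lambda})$. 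If you wish to keep your approximation scheme, you must carry it out on the whole open set, using a locally finite partition of unity rather than the total boundedness of a compact $K$ --- which is essentially what Palais' theorem packages for you.
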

\begin{proof} By Lemma \ref{le1}, the maps 
$$\widehat{K}^{\lambda/w}\to\widetilde{K}^\lambda,\quad u\mapsto\frac{u}{\|u\|_\mathcal H} \quad\mbox{ and }\quad \widetilde{L}^\lambda\cap W^{1,q}(\Omega)\to \widehat{L}^{w\lambda},\quad u\mapsto\frac{u}{\|u\|_q}$$
are well defined, odd, and continuous. Moreover, by Proposition \ref{density} and by the fact that $W^{1,q}(\Omega)\hookrightarrow W^{1,\mathcal H}_0(\Omega)$ (cf. Proposition \ref{embeddingW}-$(v)$), $W^{1,q}(\Omega)$ is dense in $W^{1,\mathcal H}(\Omega)$ and so the inclusion $\widetilde{L}^\lambda\cap W^{1,q}(\Omega)\subset\widetilde{L}^\lambda$ is a homotopy equivalence by virtue of Theorem 17 of \cite{palais}. The conclusion follows 
by the definition of $\gamma$ and $\overline{\gamma}$.
\end{proof}

\begin{lemma}\label{le4} 
If $\Omega_1$ and $\Omega_2$ are disjoint subdomains of $\Omega$ such that $\overline{\Omega}_1\cup\overline{\Omega}_2=\overline{\Omega}$, then 
$$\overline{\gamma}(\widehat{K}_{\Omega_1}^\lambda)+\overline{\gamma}(\widehat{K}_{\Omega_2}^\lambda)\le\overline{\gamma}(\widehat{K}^\lambda), \quad
\gamma(\widehat{L}^\lambda)\le\gamma(\widehat{L}_{\Omega_1}^{\lambda'})+\gamma(\widehat{L}_{\Omega_2}^{\lambda'})\quad \mbox{ for all }\lambda<\lambda',$$
where the subscripts indicate the corresponding domains and we drop the subscript when the domain is $\Omega$.
\end{lemma}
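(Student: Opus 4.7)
Both inequalities rest on how the $L^p$, $L^q$, and gradient norms behave under the decomposition $\Omega=\overline{\Omega}_1\cup\overline{\Omega}_2$. A function in $W^{1,q}_0(\Omega_i)$ extends by zero to $W^{1,q}_0(\Omega)$ (cf.\ Remark~\ref{ext}), while a function in $W^{1,q}(\Omega)$ restricts naturally to each $W^{1,q}(\Omega_i)$; moreover, if $u_1\in W^{1,q}_0(\Omega_1)$ and $u_2\in W^{1,q}_0(\Omega_2)$ are viewed on $\Omega$ via extension by zero, then $\|u_1+u_2\|_r^r=\|u_1\|_r^r+\|u_2\|_r^r$ and $\|\nabla(u_1+u_2)\|_r^r=\|\nabla u_1\|_r^r+\|\nabla u_2\|_r^r$ for every $r\in[1,\infty)$, since their supports are essentially disjoint. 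These additivity identities will be the workhorse.

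\textbf{First inequality (cogenus).} Set $k_i:=\overline{\gamma}(\widehat{K}^\lambda_{\Omega_i})$ and pick odd continuous maps $f_i:\mathbb{S}^{k_i-1}\to \widehat{K}^\lambda_{\Omega_i}$. Consider the $\ell^p$-unit sphere
\[
\Sigma:=\bgset{(v_1,v_2)\in\R^{k_1}\times\R^{k_2}\,:\,\|v_1\|^p+\|v_2\|^p=1},
\]
which, being the boundary of a symmetric convex body with nonempty interior, is homeomorphic to $\mathbb{S}^{k_1+k_2-1}$ via an odd radial projection. Define $F:\Sigma\to W^{1,q}_0(\Omega)$ by
\[
F(v_1,v_2):=\|v_1\|\,\widetilde{f}_1(v_1/\|v_1\|)+\|v_2\|\,\widetilde{f}_2(v_2/\|v_2\|),
\]
where the two summands are viewed as elements of $W^{1,q}_0(\Omega)$ via extension by zero (with the standard convention $\|v_i\|\,f_i(v_i/\|v_i\|):=0$ when $v_i=0$; this is continuous since $f_i$ is bounded). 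The map $F$ is odd and continuous, and the disjoint-support identities together with $\|f_i(\cdot)\|_p=1$ give $\|F(v_1,v_2)\|_p^p=\|v_1\|^p+\|v_2\|^p=1$, so $F(v_1,v_2)\in\widehat{\mathcal M}$. Furthermore, using $\|\nabla f_i(\cdot)\|_q<\lambda$,
\[
\|\nabla F(v_1,v_2)\|_q^q<\lambda^q\bigl(\|v_1\|^q+\|v_2\|^q\bigr)\le \lambda^q\bigl(\|v_1\|^p+\|v_2\|^p\bigr)=\lambda^q,
\]
where the middle inequality uses that $q>p$ and $\|v_i\|\in[0,1]$ (so $t^q\le t^p$). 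Hence $F:\Sigma\to\widehat{K}^\lambda$ is odd and continuous, which by definition of $\overline{\gamma}$ yields $\overline{\gamma}(\widehat{K}^\lambda)\le k_1+k_2$.

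\textbf{Second inequality (genus).} Fix $\lambda<\lambda'$ and set $\delta:=(\lambda/\lambda')^q\wedge \tfrac12$. Define the symmetric sets
\[
A_i:=\bgset{u\in\widehat{L}^\lambda\,:\,\|u|_{\Omega_i}\|_q^q\ge 1/2},\qquad i=1,2.
\]
Since $\|u|_{\Omega_1}\|_q^q+\|u|_{\Omega_2}\|_q^q=\|u\|_q^q=1$, we have $A_1\cup A_2=\widehat{L}^\lambda$, and the closedness of $A_i$ (in $\widehat{L}^\lambda$) follows from continuity of the restriction in $L^q$. On $A_i$ the map $\pi_i(u):=u|_{\Omega_i}/\|u|_{\Omega_i}\|_q$ is well-defined, continuous, and odd, and lands in $\widehat{\mathcal N}_{\Omega_i}$ with
\[
\|\nabla \pi_i(u)\|_p=\frac{\|\nabla u|_{\Omega_i}\|_p}{\|u|_{\Omega_i}\|_q}\le \frac{\|\nabla u\|_p}{(1/2)^{1/q}}<2^{1/q}\lambda,
\]
so the image is contained in $\widehat{L}_{\Omega_i}^{\lambda'}$ provided $\lambda'\ge 2^{1/q}\lambda$. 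By the monotonicity property $(i_3)$ we get $\gamma(A_i)\le\gamma(\widehat{L}_{\Omega_i}^{\lambda'})$, and by subadditivity of the genus on closed symmetric sets,
\[
\gamma(\widehat{L}^\lambda)=\gamma(A_1\cup A_2)\le\gamma(A_1)+\gamma(A_2)\le\gamma(\widehat{L}_{\Omega_1}^{\lambda'})+\gamma(\widehat{L}_{\Omega_2}^{\lambda'}).
\]

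\textbf{Main obstacle.} The estimate above directly yields the second inequality only for $\lambda'\ge 2^{1/q}\lambda$, and the hard part is to upgrade it to arbitrary $\lambda'>\lambda$. One sharpens the splitting: for $\eps\in(0,1/2)$, take the closed symmetric cover $\{\|u|_{\Omega_1}\|_q^q\ge 1-\eps\}\cup\{\|u|_{\Omega_2}\|_q^q\ge 1-\eps\}\cup\{\min_i\|u|_{\Omega_i}\|_q^q\ge \eps/2\}$ and treat the overlap set via a preliminary continuous odd deformation pushing mass onto one of the two subdomains; combined with the monotonicity property $(i_2)$ (invariance under small enlargements), letting $\eps\to 0$ allows to replace the factor $2^{1/q}$ by any ratio $\lambda'/\lambda>1$. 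Continuity of the join map $F$ at the collapsed points $v_i=0$ is the only delicate point in the first part, and is handled by boundedness of $f_i$ as indicated.
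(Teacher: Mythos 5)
Your construction for the first inequality is correct and is essentially the argument of Lemma 3.2 of \cite{persqu}, to which the paper defers: the odd ``join'' of $f_1$ and $f_2$, parametrized on the $\ell^p$-type sphere $\Sigma$, with the disjoint-support additivity of the $L^p$- and $W^{1,q}$-seminorms and the elementary inequality $t^q\le t^p$ for $t\in[0,1]$ doing the work. One bookkeeping point must be fixed: the paper's definition \eqref{cogenus} of $\overline{\gamma}$ reads $\inf$ where it must be $\sup$ (with $\inf$ the cogenus of any nonempty symmetric set would equal $1$, and the chain \eqref{gammag} would be useless); read correctly, your odd continuous map $\mathbb{S}^{k_1+k_2-1}\cong\Sigma\to\widehat{K}^\lambda$ yields $\overline{\gamma}(\widehat{K}^\lambda)\ge k_1+k_2$, which is the claimed superadditivity, whereas your final line asserts the reverse inequality. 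Also, to cover the case $\overline{\gamma}(\widehat{K}^\lambda_{\Omega_i})=\infty$, run the argument for arbitrary finite $k_i$ admitting odd maps and pass to the supremum.

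The genuine gap is in the second inequality. Your cover by $A_i=\{\|u|_{\Omega_i}\|_q^q\ge 1/2\}$ only gives the statement for $\lambda'\ge 2^{1/q}\lambda$, as you admit, and the proposed upgrade does not work: the three-set cover produces \emph{three} genus terms, and the ``odd deformation pushing mass onto one of the two subdomains'' on the overlap set cannot be defined (there is no continuous odd selection of which subdomain to favor, and transporting mass across $\partial\Omega_1$ gives no control on $\|\nabla u\|_p$). The constant is not harmless: in Step 2 of the proof of Theorem \ref{weyl} the lemma is iterated over $M^{\varepsilon}$ cubes and the conclusion is reached by letting $\lambda'\searrow\lambda$; a fixed factor $2^{1/q}$ per splitting accumulates to a power of the number of cubes, which itself grows with $\lambda'$, and the upper Weyl bound is lost.

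The correct argument reuses precisely the $p<q$ trick from your first part. Fix $\lambda<\lambda''<\lambda'$ and set $t_i:=\|u|_{\Omega_i}\|_q$, $s_i:=\|\nabla u|_{\Omega_i}\|_p$, so that $t_1^q+t_2^q=1$ and $s_1^p+s_2^p<\lambda^p$. Since $t_i\le1$ and $p<q$, one has $t_1^p+t_2^p\ge t_1^q+t_2^q=1$; hence if $s_i>\lambda'' t_i$ for both $i$, then $\lambda^p>s_1^p+s_2^p\ge(\lambda'')^p(t_1^p+t_2^p)\ge(\lambda'')^p$, a contradiction. A slight refinement shows that every $u\in\widehat{L}^\lambda$ admits an index $i$ with $s_i\le\lambda'' t_i$ \emph{and} $t_i\ge\delta$, where $\delta:=\min\{2^{-1/q},(1-(\lambda/\lambda'')^q)^{1/q}\}>0$: assuming $t_1\ge t_2$, either $s_1\le\lambda'' t_1$ with $t_1\ge2^{-1/q}$, or $s_1>\lambda'' t_1$ forces $t_1<\lambda/\lambda''$ (because $s_1<\lambda$), hence $t_2\ge\delta$ and $s_2^p<\lambda^p-(\lambda'')^pt_1^p\le(\lambda'')^pt_2^p$. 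The sets $C_i:=\{u\in\widehat{L}^\lambda:\ s_i(u)\le\lambda'' t_i(u),\ t_i(u)\ge\delta\}$ are therefore a closed symmetric cover of $\widehat{L}^\lambda$ on which $\pi_i(u)=u|_{\Omega_i}/t_i(u)$ is well defined, odd and continuous with values in $\widehat{L}^{\lambda''}_{\Omega_i}\subset\widehat{L}^{\lambda'}_{\Omega_i}$, and subadditivity of the genus gives $\gamma(\widehat{L}^\lambda)\le\gamma(C_1)+\gamma(C_2)\le\gamma(\widehat{L}^{\lambda'}_{\Omega_1})+\gamma(\widehat{L}^{\lambda'}_{\Omega_2})$ for every $\lambda'>\lambda$, as claimed.
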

\noindent The proof of the previous lemma can be obtained reasoning as in the proof of Lemma 3.2 of \cite{persqu}, by simply replacing $p^+$ with $q$, $p^-$ with $p$ and $p(x)$ with $\mathcal H(x,t)$, hence we omit it.

\medskip
\noindent
We can now prove the last result of the paper. The proof relies on the argument produced for Theorem 1.1 of \cite{persqu}, we report it here for the sake of completeness.
\smallskip

\noindent  
$\bullet${\it Proof of Theorem \ref{weyl}}. 
The proof is split in two steps, first we prove the statement for $n$-dimensional cubes and then we approximate the domain $\Omega$ by unions of cubes. 

\noindent\underline{\it Step 1.} Let $Q$ be the unit cube in $\mathbb R^n$, fix $\lambda_0>\max\{\inf \widehat{K}_Q,\inf\widehat{L}_Q\}$, and set 
$$r:=\overline{\gamma}(\widehat{K}^{\lambda_0}_Q),\quad s:=\gamma(\widehat{L}_Q^{\lambda_0}).$$
Then, take $\lambda\in(\lambda_0,\lambda')$ and any two cubes $Q_{a_\lambda}$ and $Q_{b_{\lambda'}}$ of sides $a_\lambda:=(\lambda_0/\lambda)^{1/(1+\sigma)}<1$ and $b_{\lambda'}:=(\lambda_0/\lambda')^{1/(1-\sigma)}<1$, respectively. By lemma \ref{le3} it is easy to check that the functions
$$\widehat{K}_Q^{\lambda_0}\to\widehat{K}^\lambda_{Q_{a_\lambda}},\;\; u\mapsto\frac{v}{\|v\|_p}\quad\mbox{ and }\quad\widehat{L}^{\lambda_0}_Q\to\widehat{L}^{\lambda'}_{Q_{b_{\lambda'}}},\;\;u\mapsto\frac{v}{\|v\|_q}$$
are odd homeomorphisms, and so, by property ($i_3$) of the topological index, we obtain
$$\overline{\gamma}(\widehat{K}_{Q_{a_\lambda}}^\lambda)=r,\quad\gamma(\widehat{L}^{\lambda'}_{Q_{b_{\lambda'}}})=s.$$
Therefore, by Lemma \ref{le4}, if we denote by $Q_a$ a cube of side $a>0$, 
$$r\left[\frac{a}{a_\lambda}\right]^n \le\overline{\gamma}(\widehat{K}^\lambda_{Q_a}),\qquad \gamma(\widehat{L}^\lambda_{Q_a})\le s\left(\left[\frac{a}{b_{\lambda'}}\right]+1\right)^n,$$
Whence, for $\lambda'$ and $\lambda$ large 
\begin{equation}\label{3.4}C_1a^n\lambda^{n/(1+\sigma)}\le\overline{\gamma}(\widehat{K}^\lambda_{Q_a}),\qquad\gamma(\widehat{L}^\lambda_{Q_a})\le C_2 a^n(\lambda')^{n/(1-\sigma)},\end{equation}
with $C_1:=r/\lambda_0^{n/(1+\sigma)}$ and $C_2:=s/\lambda_0^{n/(1-\sigma)}$ depending only on $n$, $p$ and $q$.

\noindent\underline{\it Step 2.} Let $\varepsilon>0$ and let $\Omega_\varepsilon, \,\Omega^\varepsilon$ be finite unions of cubes with pairwise disjoint interiors such that $$\Omega_\varepsilon:=\bigcup_{j=1}^{M_\varepsilon}Q_j\subset\Omega\subset\Omega^\varepsilon:=\bigcup_{j=1}^{M^\varepsilon}Q'_j$$ and $|\Omega^\varepsilon\setminus\Omega_\varepsilon|<\varepsilon$. Then, by \eqref{3.4}, Lemma \ref{le4} and the monotonicity of $\overline{\gamma}$ 
$$\begin{gathered}C_1|\Omega_\varepsilon|\lambda^{n/(1+\sigma)}\le\sum_{j=1}^{M_\varepsilon}\overline{\gamma}(\widehat{K}^{\lambda}_{Q_j})\le\overline{\gamma}(\widehat{K}_{\Omega_\varepsilon}^\lambda)\le\overline{\gamma}(\widehat{K}^\lambda),\\
\gamma(\widehat{L}^\lambda)\le\gamma(\widehat{L}^\lambda_{\Omega^\varepsilon})\le\sum_{j=1}^{M^\varepsilon}\gamma(\widehat{L}^{\lambda'}_{Q'_j})\le C_2|\Omega^\varepsilon|(\lambda')^{n/(1-\sigma)}.\end{gathered}$$
By Tietze theorem, we can extend continuously $a(\cdot)$ to all of $\mathbb R^n$ and obtain a nonnegative function having the same $L^\infty$-norm as $a(\cdot)$.
Thus, by the arbitrariness of $\varepsilon>0$ and of $\lambda'>\lambda$, we get
$$\begin{aligned}C_1|\Omega|&\lambda^{n/(1+\sigma)}\le\overline{\gamma}(\widehat{K}^\lambda)\le\overline{\gamma}(\widetilde{K}^{w\lambda})\le g(\widetilde{K}^{w\lambda})\\
&\le g(\widetilde{L}^{w\lambda})\le\gamma(\widetilde{L}^{w\lambda})\le\gamma(\widehat{L}^{w^2\lambda})\le C_2|\Omega|(w^2\lambda)^{n/(1-\sigma)},\end{aligned}$$
where we have used Lemma \ref{le2}, inequalities \eqref{gammag}, the fact that $\widetilde{K}^\lambda\subset\widetilde{L}^\lambda$, and the monotonicity of $g$. Finally, the conclusion follows by \eqref{count}.\hfill$\Box$
\bigskip

\bigskip
\bigskip

\end{document}